
\documentclass[12pt, twoside]{article}
\usepackage{amsmath,amsthm,amssymb,graphicx,euscript, mathtools,mathrsfs}
\usepackage{times, url}
\usepackage{enumerate}

\pagestyle{myheadings}
\def\titlerunning#1{\gdef\titrun{#1}}
\makeatletter
\def\author#1{\gdef\autrun{\def\and{\unskip, }#1}\gdef\@author{#1}}
\def\address#1{{\def\and{\\\hspace*{18pt}}\renewcommand{\thefootnote}{}%
\footnote {#1}}%
\markboth{\autrun}{\titrun}}
\makeatother
\def\email#1{e-mail: #1}
\def\subjclass#1{{\renewcommand{\thefootnote}{}%
\footnote{\emph{Mathematics Subject Classification (2010):} #1}}}
\def\keywords#1{\par\medskip
\noindent\textbf{Keywords.} #1}


\newtheorem{thm}{Theorem}[section]
\newtheorem{cor}[thm]{Corollary}
\newtheorem{lem}[thm]{Lemma}

\newtheorem{prop}[thm]{Proposition}



\theoremstyle{definition}
\newtheorem{defin}[thm]{Definition}
\newtheorem{rem}[thm]{Remark}

\newtheorem{proc}[thm]{Process}


\newcommand{\BB}{\EuScript{B}}

\newcommand{\DD}{\EuScript{D}}
\newcommand{\KK}{\EuScript{K}}

\newcommand{\R}{\mathbb{R}}
\newcommand{\Z}{\mathbb{Z}}

\newcommand{\calD}{\mathcal{D}}

\newcommand{\MF}{\mathop{\mathrm{MF}}\nolimits}

\newcommand{\Endop}{\mathop{\mathrm{Endop}}\nolimits}
\newcommand{\eu}{e^{\lambda}}
\newcommand{\el}{e_{\lambda}}
\newcommand{\modu}{\mathop{\mathrm{mod}}\nolimits}
\newcommand{\MCM}{\mathop{\mathrm{MCM}}\nolimits}
\newcommand{\Perf}{\mathop{\mathrm{Perf}}\nolimits}

\newcommand{\Inj}{\mathop{\mathrm{Inj}}\nolimits}
\newcommand{\id}{\mathop{\mathrm{id}}\nolimits}
\newcommand{\Hom}{\mathop{\mathrm{Hom}}\nolimits}
\newcommand{\Ker}{\mathop{\mathrm{Ker}}\nolimits}
\newcommand{\Tor}{\mathop{\mathrm{Tor}}\nolimits}

\newcommand{\Ext}{\mathop{\mathrm{Ext}}\nolimits}
\newcommand{\sg}{\mathop{\mathrm{sg}}\nolimits}

\newcommand{\op}{\mathop{\mathrm{op}}\nolimits}

\newcommand{\sy}{\mathop{\mathrm{sy}}\nolimits}
\newcommand{\nc}{\mathop{\mathrm{nc}}\nolimits}
\newcommand{\HH}{\mathop{\mathrm{HH}}\nolimits}
\newcommand{\Barr}{\mathop{\mathrm{Bar}}\nolimits}
\newcommand{\coker}{\mathop{\mathrm{Coker}}\nolimits}
\newcommand{\Imm}{\mathop{\mathrm{Im}}\nolimits}
\newcommand{\Spec}{\mathop{\mathrm{Spec}}\nolimits}
\newcommand{\THH}{\mathop{\mathrm{TH}}\nolimits}
\newcommand{\Cact}{\mathop{\mathscr{C}act}\nolimits}
\newcommand{\Brace}{\mathop{\BB\mathrm{race}}\nolimits}
\input xy
\xyoption{all}

\makeatletter
\newcommand{\colim@}[2]{%
  \vtop{\m@th\ialign{##\cr
    \hfil$#1\operator@font colim$\hfil\cr
    \noalign{\nointerlineskip\kern1.5\ex@}#2\cr
    \noalign{\nointerlineskip\kern-\ex@}\cr}}%
}
\newcommand{\colim}{%
  \mathop{\mathpalette\colim@{\rightarrowfill@\textstyle}}\nmlimits@
}
\makeatother


\numberwithin{equation}{section}

\frenchspacing

\textwidth=15cm
\textheight=23cm
\parindent=16pt
\oddsidemargin=-0.5cm
\evensidemargin=-0.5cm
\topmargin=-0.5cm




\begin{document}


\baselineskip=17pt


\titlerunning{Gerstenhaber algebra and Deligne's conjecture on Tate-Hochschild cohomology}
\title{Gerstenhaber algebra and Deligne's conjecture on Tate-Hochschild cohomology}
\author{Zhengfang Wang
}
\date{}
\maketitle
\address{Beijing International Center for Mathematical Research, Peking University, No. 5 Yiheyuan Road Haidian District, Beijing 100871, PR China;
\and Universit\'e Paris Diderot-Paris 7, IMJ-PRG CNRS UMR 7586, B\^atiment Sophie Germain, Case 7012, 75205 Paris Cedex 13, France;   \email{zhengfang.wang@imj-prg.fr}}
\subjclass{16E40, 13D03; 18D50, 14B07.}
\begin{abstract}
Using non-commutative differential forms, we construct a complex called {\it singular Hochschild cochain complex}  for any associative algebra over a field. The cohomology of this complex is isomorphic to the Tate-Hochschild cohomology in the sense of Buchweitz.  
By a natural action of the cellular chain operad of the spineless  cacti operad, introduced by R. Kaufmann, on the singular Hochschild cochain complex,   we provide a proof of the Deligne's conjecture for this complex. More concretely, the complex is an algebra over the (dg) operad of chains of the little $2$-discs operad. By this action, we also obtain that the singular Hochschild cochain complex has a $B_{\infty}$-algebra structure and its cohomology ring is a Gerstenhaber algebra.

Inspired by the original definition of Tate cohomology for finite groups, we define a generalized Tate-Hochschild  complex with the Hochschild chains in negative degrees and the Hochschild cochains in non-negative degrees.  There is a natural embedding of this complex into  the singular Hochschild cochain complex. In the case of a self-injective algebra, this embedding becomes a quasi-isomorphism.  In particular,  for a symmetric algebra, this  allows us to show that the Tate-Hochschild cohomology ring, equipped with the Gerstenhaber algebra structure, is a Batalin-Vilkovisky algebra.


\keywords{Tate-Hochschild cohomology,  Gerstenhaber algebra, Batalin-Vilkovisky algebra,  Deligne's conjecture, $B_{\infty}$-algebra}
\end{abstract}

\section{Introduction}
Hochschild cohomology, introduced by Hochschild \cite{Hoc} in 1945,  is a cohomology theory for associative algebras. Motivated by Eilenberg-MacLane's approach to the cohomology theory of groups, Hochschild introduced a cochain complex $C^*(A, M)$ for an associative algebra $A$  and an $A$-$A$-bimodule $M$. The Hochschild cohomology groups  (with coefficients in $M$) of $A,$ denoted by $\HH^*(A, M)$, are defined as the cohomology groups of $C^*(A, M)$. Recall that $\HH^i(A, M)$ is isomorphic to the space of morphisms from $A$ to $s^iM$ in the bounded derived category $\DD^b(A\otimes A^{\op})$ of  $A$-$A$-bimodules, where $s^i$ is the $i$-th shift functor for $i\in\Z$.

Later in the 1960s, Gerstenhaber \cite{Ger} found that there is a rich algebraic structure on $C^*(A, A)$ when studying the deformation theory of associative algebras. There is a cup product,  which makes $C^*(A, A)$ into a differential graded (dg) associative algebra. This cup product has a remarkable property that it is not commutative on $C^*(A, A)$ but graded commutative up to homotopy. He also constructed a differential graded (dg) Lie algebra  (of degree $-1$) structure on $C^*(A, A)$.   The induced Lie  bracket on $\HH^*(A, A)$ satisfies the graded Leibniz rule with respect to the cup product. Nowadays we call $\HH^*(A, A)$, together with the Lie bracket (called Gerstenhaber bracket) and cup product, a Gerstenhaber algebra (cf. Theorem \ref{thm-ger}). Moreover,   Gerstenhaber  showed that the dg Lie algebra $C^{*+1}(A, A)$ controls the deformation theory of $A$.

Recall that the little $2$-discs operad  is a topological operad whose space in arity $n$ is the topological space of  standard embeddings (i.e.  translations composed with dilations) of the disjoint union of $n$ discs into a standard disc.  Cohen \cite{Coh} in 1973 found that if a topological space $X$ is an algebra over the little $2$-discs operad, then its singular homology $H_*(X)$ is a Gerstenhaber algebra. In 1993, Deligne asked  whether the Hochschild cochain complex $C^*(A, A)$ of an associative algebra $A$ has a natural action of the little $2$-discs operad. This is the original Deligne's conjecture for Hochschild cochain complexes, which has been proved by several researches  using different chain models of the little $2$-discs operad (cf. \cite{Tam98b, Kon, KoSo, Vor, McSm, Kau07a}). We also refer to \cite{Kon} for its connection with Kontsevich's deformation quantization theorem.

On the other hand, in the 1980s,  Buchweitz in an unpublished manuscript \cite{Buc} provided a general framework for Tate cohomology of Gorenstein algebras. To do this, he introduced the notion of {\it stable derived category} as the Verdier quotient of the bounded derived category by the full subcategory consisting of compact objects. This notion is also known as the {\it singularity category}, rediscovered by Orlov \cite{Orl04} in the study of homological mirror symmetry. 
Under Buchweitz's framework, for any Noetherian algebra $A$ (not necessarily commutative), it is very natural to define the Tate-Hochschild cohomology groups   as the morphism spaces from $A$ to  $s^iA$ ($i\in \Z$) in the singularity category $\calD_{\sg}(A\otimes A^{\op})$ of finitely generated $A\otimes A^{\op}$-modules.  The Tate-Hochschild cohomology  has been investigated by a few authors (cf. \cite{BeJo, EuSc, Ngu}) only in the case of Frobenius algebras.  We remark that Tate cohomology is also implicitly Tate-Vogel's cohomology \cite{Goi} and exposed in \cite{AvVe}. The Tate-Hochschild cohomology is also called singular Hochschild cohomology in \cite{RiWa, Wan}.

This paper attempts to provide a  more complete picture of Tate-Hochschild cohomology by describing  richer algebraic structures as it was done for Hochschild cohomology.   These algebraic structures might shed new light on the study of Tate-Hochschild cohomology not only in algebra but also in other fields such as noncommutative geometry, symplectic geometry, operad theory, and string topology.

We start with constructing a complex $C_{\sg}^*(A, A),$ called singular Hochschild cochain complex,  for any associative algebra $A$, which calculates the Tate-Hochschild cohomology of $A$. It is a colimit of Hochschild cochain complexes $C^{*}(A, \Omega^p_{\nc}(A))$  with coefficients in the non-commutative differential forms $\Omega_{\nc}^p(A)$ (concentrated in degree $-p\in\Z_{\leq 0}$) along natural embeddings    $\theta_p: C^{*}(A, \Omega_{\nc}^p(A))\hookrightarrow C^{*}(A, \Omega_{\nc}^{p+1}(A)), f \mapsto f\otimes \id_{s\overline{A}}$   (cf. Definition \ref{defin3.2}). In other words,  $C_{\sg}^*(A, A)$ has a filtration of cochain complexes $$0\subset C^*(A, A)\subset C^*(A, \Omega_{\nc}^1(A))\subset \cdots \subset C^{*}(A, \Omega_{\nc}^p(A)) \subset \cdots \subset C_{\sg}^*(A, A).$$ This yields a natural map from $\HH^*(A, A)$ to the cohomology, denoted by $\HH_{\sg}^*(A, A)$, of $C_{\sg}^*(A, A)$. 
 Moreover, this map   coincides with the one induced by the quotient functor from  $\DD^b(A\otimes A^{\op})$ to the singularity category $\DD_{\sg}(A\otimes A^{\op})$  (cf. Theorem \ref{thm1}).

A natural question is whether the Gerstenhaber algebra structure on $\HH^*(A, A)$ can be extended to $\HH_{\sg}^*(A, A)$. We give an affirmative answer to this question by an  explicit  construction of  a cup product and Lie bracket (of degree -1) on $C_{\sg}^*(A, A)$, which makes $\HH^*_{\sg}(A, A)$ into a Gerstenhaber algebra (cf. Corollary \ref{cor5.3}). We further show that the natural map from $\HH^*(A, A)$ to $\HH_{\sg}^*(A, A)$ is a morphism of Gerstenhaber algebras.

In the series of papers \cite{Kau05, Kau07a, Kau08}, the author introduced the (topological) operad $\Cact$ of spineless cacti. He proved that the cellular chain (dg) operad $CC_*(\Cact )$  is  equivalent to  the operad of chains of the little $2$-discs operad  (cf. \cite[Theorem 3.11]{Kau07a}).  There is a natural action of $CC_*(\Cact)$ on $C^*(A, A)$. Recall that the brace operations (cf. Definition \ref{defin5.41}) on $C^*(A, A)$, due to Kadeishvili \cite{Kad} and Getzler \cite{Get}, play a crucial role in almost all existing proofs of the Deligne's conjecture. The brace  operations, together with the cup product,  endow $C^*(A, A)$ with a $B_{\infty}$-algebra structure (cf. \cite[Theorem 3.1]{Vor}). Let $\Brace$ be the dg suboperad of the endomorphism operad $\Endop(C^*(A, A))$, generated by the cup product and the brace operations.  From \cite[Proposition 4.9]{Kau07a} it follows that  $\Brace$ is isomorphic to $CC_*(\Cact)$. In this paper, we will show  that the action of $CC_*(\Cact)$ on $C^*(A, A)$ can be naturally extended to  $C_{\sg}^*(A, A)$. As a consequence, we obtain that $C_{\sg}^*(A,A)$ is a $B_{\infty}$-algebra with a $B_{\infty}$-subalgebra $C^*(A, A)$ (cf. Theorem \ref{thm5.2}) and  that the Deligne's conjecture holds for $C^*_{\sg}(A, A)$ (cf. Theorem \ref{thm5.1}).

Motivated by the original definition of  Tate cohomology for finite groups, we construct another unbounded complex $\calD^*(A, A)$,  called generalized Tate-Hochschild complex, for any associative algebra $A$:
\begin{equation*}
\calD^*(A, A): \cdots\xrightarrow{b_2} C_1(A, A^{\vee})\xrightarrow{b_1} A^{\vee}\xrightarrow{\mu} A\xrightarrow{\delta^0} C^1(A, A)\xrightarrow{\delta^1} \cdots,
\end{equation*}
where  $A^{\vee}:=\Hom_{A\otimes A^{\op}}(A, A\otimes A^{\op})$ 
and the differential $\mu$   is given by $\mu\left(\sum_i x_i\otimes y_i\right)=\sum_i x_iy_i$. In general, this complex does not caculate the Tate-Hochschild cohomology,  but however there exists a natural embedding of $\calD^*(A, A)$ into $C_{\sg}^*(A, A)$. Moreover, this embeding becomes a quasi-isomorphism if $A$ is a self-injective algebra over a field. In particular, when $A$ is symmetric, this allows us to prove that the Tate-Hochschild cohomology,  equipped with the Gerstenhaber algebra structure,  is a Batalin-Vilkovisky (BV) algebra. The BV differential operator is induced by the Connes' $B$ operator and its dual on $\calD^*(A, A)$ (cf. Theorem \ref{thm6.17}).  Inspired by the cyclic Deligne's conjecture \cite{Kau08}, it is natural to ask  whether $C_{\sg}^*(A, A)$ (or equivalently, $\calD^*(A, A)$) is an algebra over the framed little $2$-discs operad if $A$ is a symmetric algebra.

{\bf Related and future works:}
It follows from \cite{LoVa05} that the Hochschild cohomology  of a dg algebra is isomorphic to the Hochschild cohomology of  dg enhancements of its derived category.  Inspired by this fact, it is interesting to  study the relationship between the Tate-Hochschild cohomology and the Hochschild cohomology of dg enhancements of its singularity category. This problem is closely related to the uniqueness (up to quasi-equivalences) of dg enhancements of a singularity category since  two quasi-equivalent dg categories have the same  Hochschild cohomology (cf. \cite{Kel, Toe}). 

Let $(R, \mathfrak{m})$ be a regular local ring. Suppose that $w\in\mathfrak{m}$ be a non-zero element such that the hypersurface $\Spec(R/w)$ has an isolated singularity at $\mathfrak{m}$. From \cite[Corollary 6.4]{Dyc} it follows that the Hochschild cohomology of the $2$-periodic dg category of matrix factorizations $\MF_{\Z}(R, w)$ is isomorphic to the Jacobian algebra $R/(\partial_1w, \cdots, \partial_nw)$  in even degree and vanishes in odd degree. This is a $\Z/2\Z$-graded version of Hochschild cohomology. But $\HH_{\sg}^*(R/w, R/w)$ is isomorphic to the Tyurina algebra $R/(w, \partial_1w, \cdots, \partial_nw)$ in each even degree.   
Thus in general, $\HH_{\sg}^*(R/w, R/w)$ is  not isomorphic to the Hochschild cohomology of the dg enhancement $\MF_{\Z}(R, w)$ of $\DD_{\sg}(R/w)$ after translating the $\Z/2\Z$-graded version to $\Z$-graded one.  On the contrary, let $Q$ be a finite quiver (not necessarily acyclic) without sources or sinks. Denote by $A_Q$  the radical square zero algebra $kQ/\langle Q_2\rangle$, where $Q_2$ is the set of paths of length $2$.  
We show \cite{ChLiWa} that  $\HH_{\sg}^*(A_Q, A_Q)$ is isomorphic to the Hochschild cohomology of the dg category $\KK_{ac}(\mbox{$A_Q$-$\Inj$})^c,$ the full subcategory of compact objects in  the dg category of acyclic complexes of injective $A_Q$-modules, of $\DD_{\sg}(A_Q)$. It is known that $\KK_{ac}(\mbox{$A_Q$-$\Inj$})^c$ is a dg enhancement of $\DD_{\sg}(A_Q)$ (cf. \cite{Kra}). 

In order to understand the relevance of the algebraic structures discussed in this paper in other fields such as  symplectic geometry and string topology, we  generalize our constructions  to the dg case in \cite{RiWa}.  As an application, we  provide  a rational homotopy invariant of topological spaces. More explicitly, suppose that $X$ is a topological space and $C^*(X)$ is its rational singular cochain complex, which is clearly a dg algebra. We show that the singular Hochschild cochain complex $C_{\sg}^*(C^*(X), C^*(X))$ gives a rational homotopy invariant of $X$.   We also obtain that $\HH_{\sg}^*(C^*(M), C^*(M))$  of a simply-connected closed manifold $M$ is isomorphic to the Rabinowitz-Floer homology of the unit disc cotangent bundle $D(T^*M)\subset T^*M$ with the canonical symplectic structure (cf. \cite[Theorem 7.1]{RiWa} and \cite[Theorem 1.10]{CiFrOa}). 
Inspired by the open-closed and closed-open string maps in symplectic geometry (cf. \cite{Sei}), it is interesting to wonder whether this isomorphism lifts to the chain level from a geometric point of view.

In deformation theory, there is a general guiding principle that every deformation problem in characteristic zero is governed by a dg Lie algebra, due to numerous researchers such as Deligne, Grothendieck, Drinfeld, and Kontsevich. Recently, this principle is formulated by Lurie \cite{Lur} via the language of $\infty$-categories.  
To the best of the author's knowledge,  it is still unclear which deformation problem the dg Lie algebra $C_{\sg}^{*+1}(A, A)$ controls. Motivated by the works \cite{LoVa06,  KeLo} on the deformation theory of abelian categories and triangulated categories, it is expected that  $C_{\sg}^{*+1}(A,A)$ is related to the deformation theory of the singularity category $\DD_{\sg}(A)$. 

Throughout this paper, we fix a field $k$. 
For simplifying the notation, we always write $\otimes$ instead of $\otimes_k$ and write $\Hom$ instead of  $\Hom_k$,  when no confusion may occur. For simplicity, we write $s\overline{a}_{i, j}:=s\overline{a}_i \otimes s\overline{a}_{i+1}\otimes \cdots \otimes s\overline{a}_j\in (s\overline{A})^{\otimes j-i+1}$ for $0\leq i\leq j,$ where $s$ is the shift functor in the category of complexes.

\section{Preliminaries}
\subsection{Hochschild homology and cohomology}
\subsubsection{Normalized bar resolution} \label{bar-resolution} Let $A$ be a unital associative algebra over a field $k$. Let $\overline{A}$ be the quotient $k$-module $A/(k\cdot 1)$ of $A$ by the $k$-scalar multiplies of the unit. Denote by $s\overline{A}$ the graded $A$-module concentrated in degree $-1$, namely, $(s\overline{A})^{-1}=\overline{A}$.  The {\it normalized bar resolution} $\Barr_*(A)$ is the complex of $A$-$A$-bimodules with $\Barr_p(A)=A\otimes s\overline{A}^{\otimes p}\otimes A$ for $p\in \Z_{\geq0}$
and the differentials $$d_p(a_0\otimes s\overline{a}_{1, p}\otimes a_{p+1}):=\sum_{i=0}^{p} (-1)^ia_0\otimes s\overline{a}_{1, i-1}\otimes s\overline{a_ia_{i+1}}\otimes s\overline{a}_{i+2, p}  \otimes a_{p+1}.$$ Here we remark that the term corresponding to $i=0$ in the  above formula should be $a_0a_1\otimes s\overline{a}_{2, p+1}$ and similarly the term of $i=p$ is $a_0\otimes s\overline{a}_{1, p-1} \otimes a_pa_{p+1}$. In order  to shorten the formula,  we write the  sum in such uniform way when no confusion may occur. 
It is well-known  (cf. e.g. \cite{Lod, Zim}) that $\Barr_*(A)$ is a projective bimodule resolution of $A$.


\subsubsection{Definitions of Hochschild (co)-homology}
Let $A$ be an associative algebra over a field $k$ and $M$ be a graded $A$-$A$-bimodule. The   {\it normalized Hochschild cochain complex} $C^*(A, M)$ with coefficients in $M$ is obtained by applying the functor $\Hom_{A\otimes A^{\op}}(-, M)$ to the normalized bar resolution $\Barr_*(A)$ and then using the canonical isomorphisms $\Hom_{A\otimes A^{\op}}(A\otimes s\overline{A}^p\otimes A, M) \cong \Hom(s\overline{A}^{\otimes p}, M)$. Therefore, $C^*(A, M)$ is the following complex$$\cdots\rightarrow C^{-1}(A, M) \rightarrow C^{0}(A, M)\xrightarrow{\delta^0} \cdots \rightarrow C^{p-1}(A, M)\xrightarrow{\delta^{p-1}} C^p(A, M)\xrightarrow{\delta^p} \cdots$$
where  $C^p(A, M):=\prod_{i\in \Z_{\geq 0}} \Hom((s\overline{A})^{\otimes i}, M)^p$ for $ p\in\Z$ and $$\Hom((s\overline{A})^{\otimes i}, M)^p:=\{f\in \Hom((s\overline{A})^{\otimes i}, M) \ | \ \mbox{$f$ is graded of degree $p$}\}.$$ Here we recall that $s\overline{A}$ is a graded $k$-module concentrated in degree $-1$.  The  differential is given by
\begin{equation*}
\begin{split}
(-1)^{p+1}\delta^{p}(f)(s\overline{a}_{1, i+1})=&a_1f(s\overline{a}_{2, i+1})+\sum^{i}_{j=1} (-1)^j f(s\overline{a}_{1, j-1}\otimes s\overline{a_ja_{j+1}} \otimes s\overline{a}_{j+2, i+1})\\
&+ (-1)^{i+1}f(s\overline{a}_{1, i})a_{i+1},
\end{split}
\end{equation*}
for $f\in C^{p}(A, M)$. The $p$-th  {\it Hochschild cohomology} of $A$ with coefficients in $M$, denoted by $\HH^p(A, M),$ is defined as the cohomology group $\frac{\Ker(\delta^p)}{\Imm(\delta^{p-1})}$ of $C^*(A, M)$. In particular,  we call $\HH^*(A, A)$ the Hochschild cohomology ring of $A$, and $C^*(A, A)$ the Hochschild cochain complex of $A$.

Similarly,  applying the functor $M\otimes_{A\otimes A^{\op}}-$ to $\Barr_*(A)$ and then using isomorphisms $M\otimes_{A\otimes A^{\op}}(A\otimes s\overline{A}^{\otimes p}\otimes A) \cong M\otimes s\overline{A}^{\otimes p}$, we obtain the {\it normalized Hochschild chain complex} $C_{*}(A, M)$ of $A$ with coefficients in $M$, with $C_p(A, M)=\bigoplus_{i\in\Z_{\geq 0} }(M\otimes (s\overline{A})^{\otimes i})^p.$ Here $m\otimes s\overline{a}_{1,i} \in C_p(A, M)$ if and only if $|m|-i=p$, where by $|m|$ we mean the degree of $m$. The differential $b_p: C_p(A, M)\rightarrow C_{p-1}(A, M)$ is given by
\begin{eqnarray*}
b_p(m\otimes s\overline{a}_{1, i}\lefteqn{)=(-1)^mma_1\otimes s\overline{a}_{2, i}+}\\
&&\sum_{j=1}^{i-1} (-1)^{j+|m|} m\otimes s\overline{a}_{1, j-1}\otimes s\overline{a_ja_{j+1}}\otimes s\overline{a}_{j+2, i}+(-1)^pa_im\otimes s\overline{a}_{1, i-1}.
\end{eqnarray*}
The $p$-th {\it Hochschild homology} of $A$ with coefficients in $M$, denoted by $\HH_p(A, M)$, is defined as the homology group $\frac{\Ker(b_p)}{\Imm (b_{p+1)}}$ of $C_*(A, M)$. 

Since $\Barr_*(A)$ is a projective bimodule resolution of  $A$, it follows that $\HH^p(A, M)\cong \Ext_{A\otimes A^{\op}}^p(A, M)$ and $\HH_p(A, M)\cong \Tor_p^{A\otimes A^{\op}}(M, A)$.

\subsection{Gerstenhaber and Batalin-Vilkovisky algebras}\label{section-gerstenhaber}
In the 1960s when Gerstenhaber \cite{Ger} studied the deformation theory of algebras, he found that there is a rich structure on the Hochschild cochain complex $C^*(A, A)$. Besides the graded $k$-module structure, it has a differential graded (dg) associative algebra structure with the cup product $(g\cup f)(s\overline{a}_{1, m+n})=f(s\overline{a}_{1, m})g(s\overline{a}_{m+1,m+n} ),$ for $f\in C^m(A, A)$ and $g\in C^n(A, A)$. This cup product has a remarkable property: it is not (graded) commutative in $C^*(A, A)$, but the induced product on $\HH^*(A, A)$ is graded commutative. There is also a differential graded (dg) Lie algebra structure on $C^{*+1}(A, A)$ with the Gerstenhaber bracket defined as follows: for $f\in C^m(A, A)$ and $g\in C^n(A, A)$,
$$[f, g]:=f\circ g-(-1)^{(m-1)(n-1)} g\circ f$$ where
\begin{eqnarray*}
f\circ g(s\overline{a}_{1, m+n-1}):=\sum_{i=1}^m (-1)^{(i-1)(n-1)} f(s\overline{a}_{1, i-1} \otimes \overline{g}(s\overline{a}_{i, i+n-1}) \otimes s\overline{a}_{i+n, m+n-1}).
\end{eqnarray*}
Furthermore, the induced Gerstenhaber bracket in $\HH^*(A, A)$ satisfies the graded Leibniz rule with respect to the cup product. In summary, Gerstenhaber proved the following result.
\begin{thm}[\cite{Ger}]\label{thm-ger}
The Hochschild cohomology ring $\HH^*(A, A)$ is a {\lq\lq Gerstenhaber algebra\rq\rq}   in the following sense:
\begin{enumerate}[\upshape(i)]
\item $(\HH^*(A, A), \cup)$ is a graded commutative algebra with the unit $1\in \HH^0(A, A)$,
\item $(\HH^{*+1}(A, A), [\cdot, \cdot])$ is a graded Lie algebra,
\item The operations $\cup $ and $[\cdot, \cdot]$ are compatible through the (graded) Leibniz rule,
$$[f, g\cup h]=[f, g]\cup h+(-1)^{(m-1)n} g\cup [f, h],$$
where $f \in C^m(A, A)$ and $g\in C^n(A, A)$.
\end{enumerate}
\end{thm}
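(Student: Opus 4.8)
The plan is to establish, by direct computation on the normalized cochain complex $C^*(A,A)$, a few ``master'' identities relating the cup product $\cup$ and the composition product $\circ$, and then to read off (i)--(iii) by passing to cohomology. Throughout, let $m\in C^2(A,A)$ be the multiplication cochain, $m(s\overline{a}_{1,2})=a_1a_2$; associativity of $A$ is precisely the statement $m\circ m=0$, equivalently $[m,m]=0$. A short computation with the defining formula for the Hochschild differential shows that $\delta=\pm[m,-]$ as an operator on $C^*(A,A)$, the sign being fixed by the conventions of the excerpt. Consequently, once $[-,-]$ is known to be a graded Lie bracket, the graded Jacobi identity shows that $\delta$ is a graded derivation of $[-,-]$, so the bracket descends to $\HH^*(A,A)$; this is (ii).

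The first and principal identity is the \emph{pre-Lie (right-symmetry) identity} for $\circ$: for $f\in C^m$, $g\in C^n$ and $h\in C^p$,
\begin{equation*}
(f\circ g)\circ h-f\circ(g\circ h)=(-1)^{(n-1)(p-1)}\bigl[(f\circ h)\circ g-f\circ(h\circ g)\bigr].
\end{equation*}
I would prove this by expanding both sides through the defining sum and classifying the terms according to how the inserted block of $g$-arguments and the inserted block of $h$-arguments sit among the arguments of $f$: either the two blocks occupy disjoint groups of slots of $f$, or the $h$-block lies strictly inside the group of arguments fed to $g$. Terms of the second kind make up exactly $f\circ(g\circ h)$ and cancel on the left; terms of the first kind are symmetric in $g\leftrightarrow h$ once one accounts for the Koszul sign produced by interchanging the two blocks, and that sign is exactly $(-1)^{(n-1)(p-1)}$, which gives the right-hand side. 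Granting this, the fact that $[f,g]:=f\circ g-(-1)^{(m-1)(n-1)}g\circ f$ satisfies the graded Jacobi identity is purely formal (the standard fact that the graded commutator of a graded right-symmetric product is a graded Lie bracket); this proves (ii) at the cochain level, hence on cohomology by the previous paragraph.

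For (i): the cup product is strictly associative on $C^*(A,A)$ by direct inspection of its definition, $1\in A=C^0(A,A)$ is a strict two-sided unit for it, and $\delta$ is a strict graded derivation of $\cup$ --- a short computation again using only associativity of $A$. Hence $\cup$ induces an associative unital product on $\HH^*(A,A)$, and it remains only to see that this product is graded commutative. That follows from an explicit homotopy: one verifies the cochain identity
\begin{equation*}
f\cup g-(-1)^{mn}\,g\cup f=\pm\Bigl(\delta(f\circ g)-(\delta f)\circ g-(-1)^{m-1}f\circ(\delta g)\Bigr),
\end{equation*}
so that when $f$ and $g$ are cocycles the left-hand side is the coboundary $\pm\delta(f\circ g)$. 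Finally, for (iii) I would establish a similar ``homotopy Leibniz'' identity on cochains exhibiting
\begin{equation*}
[f,\,g\cup h]-[f,g]\cup h-(-1)^{(m-1)n}\,g\cup[f,h]
\end{equation*}
as the sum of a coboundary and of terms each containing one of $\delta f$, $\delta g$, $\delta h$ as a factor; evaluating on cocycles then yields the Leibniz rule in $\HH^*(A,A)$.

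The only genuine work here is combinatorial: the pre-Lie identity and the two homotopy identities each require a careful case analysis of how the inserted blocks of arguments overlap, together with disciplined tracking of the Koszul signs coming from placing $s\overline{A}$ in degree $-1$; this sign bookkeeping, not any conceptual point, is where I expect the difficulty to lie. Everything else --- strict associativity of $\cup$, the derivation property of $\delta$ with respect to $\cup$, deducing graded Jacobi from the pre-Lie identity, and the observation that $\delta$-closedness propagates all of these relations to cohomology --- is routine.
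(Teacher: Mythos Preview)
Your proposal is correct and is precisely Gerstenhaber's original argument. Note, however, that the paper does not itself prove this theorem: it is stated as background with a citation to \cite{Ger}, so there is no ``paper's own proof'' to compare against here.

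That said, your outline is exactly the template the paper later follows when proving the analogous result for the singular complex $C_{\sg}^*(A,A)$. Your homotopy-commutativity identity
\[
f\cup g-(-1)^{mn}\,g\cup f=\pm\bigl(\delta(f\circ g)-(\delta f)\circ g-(-1)^{m-1}f\circ(\delta g)\bigr)
\]
is the $p=q=0$ case of the paper's Proposition~\ref{prop4.4}; your pre-Lie identity is the $k=1$ case of the higher pre-Jacobi identity~(\ref{equation-B1}) and reappears in the proof of Proposition~\ref{prop-lie-algebra}; and your sketch for the Leibniz rule (one half exact on cochains via distributivity, the other half up to homotopy) is exactly how the paper derives it in the proof of Corollary~\ref{cor5.3} from the $B_\infty$-relations (\ref{equation-B2}) and (\ref{equation-B3}). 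One minor technicality you glossed over: in the \emph{normalized} complex the multiplication $\mu$ is not literally an element of $C^2(A,A)$; one must choose a $k$-linear splitting $\overline{A}\hookrightarrow A$ and check that $\delta(f)=[\overline{\mu},f]$ is independent of that choice. The paper addresses this in Remark~\ref{remark4.5-mu}, and it causes no trouble.
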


\begin{rem}\label{rem2.2}
In general, we call a graded $k$-module $G=\bigoplus\limits_{i\in \Z} G^i$, equipped with two operations $(\cup, [\cdot,\cdot]$) satisfying the above conditions (i), (ii) and (iii), {\it Gerstenhaber algebra}.  A nontrivial example is the Batalin-Vilkovisky algebra, motivated by quantum field theory.
\end{rem}

\begin{defin}
Let $V^{\bullet}=\bigoplus\limits_{n\in\Z} V^n$ be a graded commutative (associative) algebra. We say that $V^{\bullet}$, equipped with a differential $\Delta: V^{\bullet}\rightarrow V^{\bullet-1}$ of degree $-1$, is a {\it Batalin-Vilkovisky} (BV) algebra if the following conditions hold,
\begin{enumerate}
\item $\Delta(1)=0$ and $\Delta^2=0$,
\item for any $a\in V^m, b\in V^n$ and $c\in V^{\bullet}$,
\begin{eqnarray*}\lefteqn{\Delta(abc)=\Delta(ab)c+(-1)^ma\Delta(bc)+(-1)^{n(m-1)}b \Delta(ac)}\\
&&\ \ \ \ \  \ \ \  \ \ -\Delta(a)bc-(-1)^ma\Delta(b)c-(-1)^{m+n}ab\Delta( c).
\end{eqnarray*}
\end{enumerate}
\end{defin}
To each BV algebra, one can associate a graded Lie bracket $[\cdot, \cdot]$ as  the obstruction of $\Delta$ being a (graded) derivation with respect to the multiplication of $V^{\bullet}$. Explicitly, $[a, b]:=(-1)^m(\Delta(ab)-\Delta(a)b-(-1)^ma\Delta(b))$ for $a\in V^m$ and $b\in V^n$. This is called BV identity of $V^{\bullet}$. It follows from \cite[Proposition 1.2]{Get} that the graded commutative algebra $V^{\bullet}$, endowed with this Lie bracket $[\cdot, \cdot]$, is a Gerstenhaber algebra.

Almost all the examples  of BV algebras in literature (cf. e.g.  \cite{ChSu, Get}) are strongly inspired by quantum field theory and string theory. A typical example is the Hochschild cohomology $\HH^*(A, A)$ of a symmetric algebra $A$.
\begin{thm}[\cite{Tra, Men, Kau07}]\label{thm-bv2}
Let $A$ be a symmetric algebra. Then  $(\HH^*(A, A), \cup, [\cdot, \cdot])$ is a BV algebra whose BV operator $\Delta$ is  the dual of the Connes' B operator.
\end{thm}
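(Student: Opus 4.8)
The plan is to transport Connes' $B$-operator from Hochschild homology to Hochschild cohomology along the self-duality of a symmetric algebra, and then to reduce the Batalin--Vilkovisky axioms to a single chain-level identity.

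First I would set up the duality. A symmetric algebra $A$ carries a nondegenerate symmetric invariant bilinear form, equivalently an isomorphism $\phi\colon A\xrightarrow{\ \sim\ }A^{*}:=\Hom(A,k)$ of $A$-$A$-bimodules; the symmetry of the form is exactly what makes $\phi$ bimodule-linear with \emph{no} Nakayama twist, and this is where the hypothesis ``symmetric'' (rather than merely Frobenius) enters. Since $k$ is a field, $\phi$ induces an isomorphism of cochain complexes $C^{*}(A,A)\xrightarrow{\ \sim\ }C^{*}(A,A^{*})$, and the tensor-hom adjunction isomorphisms $\Hom((s\overline{A})^{\otimes n},\Hom(A,k))\cong\Hom(A\otimes(s\overline{A})^{\otimes n},k)$ identify $C^{*}(A,A^{*})$ with the $k$-linear dual $C_{*}(A,A)^{*}$ of the normalized Hochschild chain complex, compatibly with differentials. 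Since $k$ is a field this dualization is exact, so one gets a degree-preserving isomorphism $\HH^{n}(A,A)\cong\HH_{n}(A,A)^{*}$ for all $n$, under which the cup product on $\HH^{*}(A,A)$ is intertwined with the dual of the cap-product action of $(C^{*}(A,A),\cup)$ on $C_{*}(A,A)$.

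Next, define $\Delta\colon\HH^{n}(A,A)\to\HH^{n-1}(A,A)$ to be the $k$-linear dual of Connes' operator $B\colon C_{n}(A,A)\to C_{n+1}(A,A)$, transported through the isomorphism above; the relations $B^{2}=0$ and $bB+Bb=0$ guarantee that $B$ descends to $\HH_{*}(A,A)$ and that $\Delta$ is well defined with $\Delta^{2}=0$, while $\Delta(1)=0$ holds trivially because $1\in\HH^{0}(A,A)$ and $\HH^{-1}(A,A)=0$. By the characterization of BV operators recalled before the statement (together with \cite[Proposition 1.2]{Get} and Theorem~\ref{thm-ger}), it then suffices to prove the single identity
\[
(-1)^{|f|}\big(\Delta(f\cup g)-\Delta(f)\cup g-(-1)^{|f|}\,f\cup\Delta(g)\big)=[f,g]
\]
on $\HH^{*}(A,A)$: once the bracket canonically associated with $\Delta$ is shown to coincide with the Gerstenhaber bracket of Theorem~\ref{thm-ger}, the seven-term BV identity follows automatically, since the Gerstenhaber bracket is already a biderivation for $\cup$.

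Finally, the displayed identity should be obtained by dualizing the noncommutative Cartan homotopy formula on $C_{*}(A,A)$: for a Hochschild cocycle $f$ one has $L_{f}=B\circ(f\cap-)-(-1)^{|f|}(f\cap-)\circ B$ up to an explicit chain homotopy (the Rinehart operator), and the $B$-part of this formula, translated through the duality of the first step and combined with the identification of the dual of the Lie derivative $L_{f}$ with the operator $g\mapsto[f,g]+(-1)^{|f|}\Delta(f)\cup g$, reproduces exactly the displayed equation on cohomology. Equivalently, following \cite{Tra} one writes down directly, in terms of the explicit cochain formulas for $\cup$, $[\cdot,\cdot]$ and $\Delta=B^{*}$ on $C^{*}(A,A^{*})$, a homotopy $H$ with $bH+Hb$ equal to the difference of the two sides; \cite{Men} packages the same content via the Hochschild ``calculus'' and \cite{Kau07} via the cyclic cacti operad. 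I expect this last step --- producing the homotopy and, above all, keeping every sign consistent --- to be the main obstacle; everything else (the duality, $\Delta^{2}=0$, $\Delta(1)=0$, and the reduction to the displayed identity) is formal.
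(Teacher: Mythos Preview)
The paper does not give its own proof of this statement: Theorem~\ref{thm-bv2} is quoted as a background result with a citation to \cite{Tra, Men, Kau07}, and the text immediately following the theorem only unpacks the definition of $\Delta$ via the pairing~(\ref{equation-pairing}). So there is nothing in the paper to compare your argument against.

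That said, your outline is a faithful summary of how the cited references establish the result. The reduction to the single identity $(-1)^{|f|}(\Delta(f\cup g)-\Delta(f)\cup g-(-1)^{|f|}f\cup\Delta(g))=[f,g]$ is exactly the strategy of \cite{Tra} and \cite{Men}, and your remark that the symmetry (not merely Frobenius) hypothesis is what kills the Nakayama twist is the right place to locate where the assumption is used. One small caution: your description of how the dual of $L_f$ matches up with $g\mapsto [f,g]+(-1)^{|f|}\Delta(f)\cup g$ is somewhat telegraphic, and in practice this is where the sign bookkeeping bites; if you actually carry this out you will want to follow \cite{Tra} and write the homotopy explicitly rather than rely on dualizing the Cartan calculus identities, since the latter route tends to hide exactly the sign you need. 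Otherwise the proposal is sound.
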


Recall that a finite-dimensional algebra $A$ is {\it symmetric} if there is an associative, symmetric and non-degenerate bilinear form $\langle\cdot, \cdot \rangle: A\times A\rightarrow k$. More explicitly, $\langle a, bc\rangle=\langle ab, c\rangle, \langle a, b\rangle =\langle b, a \rangle$ for all $a, b, c\in A$, and the map $A\rightarrow D(A), a \mapsto \langle a, -\rangle$ from $A$ to  the $k$-linear dual $D(A)$ is an isomorphism.   Note that the pairing $\langle\cdot, \cdot\rangle$ on $A$  induces a graded pairing (still denoted by $\langle\cdot, \cdot\rangle$),
\begin{equation}\label{equation-pairing}
\langle\cdot, \cdot\rangle: C^*(A, A)\times C_*(A, A) \rightarrow k
\end{equation}
defined by
$ \langle f, a_0\otimes s\overline{a}_{1, m}\rangle:= \langle a_0, f(s\overline{a}_{1, m})\rangle,$
for any $f\in C^m(A, A)$ and $a_0\otimes s\overline{a}_{1, m}\in C_m(A, A)$. The BV operator $\Delta$ on $\HH^*(A, A)$ is determined by
$(-1)^{m}\langle \Delta(f)(s\overline{a}_{1, m}), a_0\rangle=\langle B(a_0\otimes s\overline{a}_{1, m}), f\rangle,
$ where $B$ is the Connes' $B$ operator defined by
\begin{equation}\label{B-operator}B(a_0\otimes s\overline{a}_{1,m})=\sum^{m+1}_{i=1} (-1)^{mi}1\otimes s\overline{a}_{i, m}\otimes s\overline{a_0}\otimes s\overline{a}_{1, i-1}.
\end{equation}


\subsection{Noncommutative differential forms}
There are several ways to define noncommutative differential forms of an associative $k$-algebra (not necessarily commutative). In the following, let us recall two of the (equivalent) definitions appeared in \cite{CuQu, Gin}.


The first definition  is originally due to Cuntz-Quillen \cite{CuQu}. Let $A$ be an $k$-algebra.   The noncommutative differential forms of $A$  is  the graded $k$-module $\Omega_{\nc}^{\bullet}(A):=\bigoplus\limits_{n\in \Z_{\geq 0}} A\otimes s\overline{A}^{\otimes n}$. There is a product on $\Omega_{\nc}^{\bullet}(A)$ defined by
\begin{eqnarray*}
(a_0\otimes s\overline{a}_{1, m})\lefteqn{(a_{m+1}\otimes s\overline{a}_{m+2, m+n+1})}\\
&:=\sum\limits_{i=0}^m(-1)^{m-i} a_0\otimes s\overline{a}_{1, i-1}\otimes s\overline{a_ia_{i+1}}  \otimes s\overline{a}_{i+2, m+n+1},
\end{eqnarray*}
where $a_0\otimes  s\overline{a}_{1, m}\in A\otimes s\overline{A}^{\otimes m}$ and $a_{m+1}\otimes  s\overline{a}_{m+2, m+n+1}\in A\otimes s\overline{A}^{\otimes n}$. It is clear that this product gives rise to a (graded) $A$-$A$-bimodule structure on $\Omega_{\nc}^{\bullet}(A)$. The left action is given by the multiplication of $A$ and the right action (denoted by $\blacktriangleleft$) is by $$(a_0\otimes s\overline{a}_{1, n})\blacktriangleleft a_{n+1}=\sum_{i=0}^n (-1)^{n-i}a_0\otimes s\overline{a}_{1, i-1}\otimes s\overline{a_ia_{i+1}} \otimes s\overline{a}_{i+2, n+1},$$ for any $ a_{n+1}\in A$ and $a_0\otimes \overline{a}_{1, n}\in A\otimes s\overline{A}^{\otimes n}$. There is a natural isomorphism of $A$-$A$-bimodules $\Omega_{\nc}^p(A)\otimes_A\Omega_{\nc}^q(A)\cong \Omega_{\nc}^{p+q}(A).$
The following lemma will be used frequently throughout this paper.
\begin{lem}\label{lem0}
For any $r, s\in \Z_{>0}$, the following identity holds in $\Omega_{\nc}^{r+s}(A)$.
\begin{eqnarray*}
 (a_0\otimes \lefteqn{s\overline{a}_{1, r+s-1})\blacktriangleleft a_{r+s}}\\
&=&(-1)^{s-1}(a_0\otimes s\overline{a}_{1, r})\blacktriangleleft a_{r+1}\otimes s\overline{a}_{r+2, r+s}\\
&&+ \sum_{i=1}^{s-1} (-1)^{s+i-1}a_0\otimes  s\overline{a}_{1, r+i-1}\otimes s\overline{a_{r+i}a_{r+i+1}}\otimes s\overline{a}_{r+i+2, r+s}.
\end{eqnarray*}
\end{lem}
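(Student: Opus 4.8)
The plan is to prove the identity by expanding both sides directly from the definition of the right action $\blacktriangleleft$ and matching the two resulting sums of elementary tensors term by term, keeping track of indices and signs; the usual convention that $s\overline{a}_{i,j}$ denotes the empty tensor when $i>j$ and that a ``merge'' at position $0$ puts the product into the $A$-slot (as fixed in \S\ref{bar-resolution}) will be in force throughout. First I would write out the left-hand side:
\begin{equation*}
(a_0\otimes s\overline{a}_{1, r+s-1})\blacktriangleleft a_{r+s}=\sum_{i=0}^{r+s-1}(-1)^{r+s-1-i}\, a_0\otimes s\overline{a}_{1, i-1}\otimes s\overline{a_ia_{i+1}}\otimes s\overline{a}_{i+2, r+s},
\end{equation*}
and then split the summation into the block $0\le i\le r$ and the block $r+1\le i\le r+s-1$.

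For the first block, I would expand $(a_0\otimes s\overline{a}_{1, r})\blacktriangleleft a_{r+1}=\sum_{i=0}^{r}(-1)^{r-i}\,a_0\otimes s\overline{a}_{1, i-1}\otimes s\overline{a_ia_{i+1}}\otimes s\overline{a}_{i+2, r+1}$, tensor on the right with $s\overline{a}_{r+2, r+s}$, and multiply by $(-1)^{s-1}$. For $0\le i\le r$ every index appearing is at most $r+1$, so the concatenation $s\overline{a}_{i+2, r+1}\otimes s\overline{a}_{r+2, r+s}=s\overline{a}_{i+2, r+s}$ is legitimate (including $i=r$, where $s\overline{a}_{r+2, r+1}$ is empty), and $(-1)^{s-1}(-1)^{r-i}=(-1)^{r+s-1-i}$; hence this block is exactly the first term $(-1)^{s-1}(a_0\otimes s\overline{a}_{1, r})\blacktriangleleft a_{r+1}\otimes s\overline{a}_{r+2, r+s}$ of the right-hand side. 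For the second block I would reindex $i=r+j$ with $1\le j\le s-1$; then $(-1)^{r+s-1-i}=(-1)^{s-1-j}=(-1)^{s+j-1}$ (the exponents differ by $2j$), and the summand is $(-1)^{s+j-1}a_0\otimes s\overline{a}_{1, r+j-1}\otimes s\overline{a_{r+j}a_{r+j+1}}\otimes s\overline{a}_{r+j+2, r+s}$, which is the $j$-th term of the second sum on the right-hand side. Adding the two blocks yields the claim.

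I do not expect a real obstacle: the statement is pure bookkeeping, and the only delicate points are the degenerate cases of the notation $s\overline{a}_{i,j}$ and the verification that $s\overline{a}_{i+2, r+1}\otimes s\overline{a}_{r+2, r+s}=s\overline{a}_{i+2, r+s}$ holds uniformly across the first block. Alternatively, one can bypass the index chase conceptually: write $a_0\otimes s\overline{a}_{1, r+s-1}=(a_0\otimes s\overline{a}_{1, r})\cdot(1\otimes s\overline{a}_{r+1, r+s-1})$ as a product of forms of degrees $r$ and $s-1$, and use that $(-)\blacktriangleleft a_{r+s}$ is right multiplication by $a_{r+s}\in A=\Omega_{\nc}^0(A)$ together with associativity of the product on $\Omega_{\nc}^{\bullet}(A)$; this reduces the identity to expanding $(1\otimes s\overline{a}_{r+1, r+s-1})\blacktriangleleft a_{r+s}$ and observing that left multiplication of each of its terms by $a_0\otimes s\overline{a}_{1, r}$ is simply concatenation, since $d1=0$ annihilates the Leibniz correction terms, which reproduces the right-hand side.
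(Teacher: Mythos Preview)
Your proof is correct and is precisely the ``straightforward computation'' the paper alludes to but does not spell out: you expand the defining formula for $\blacktriangleleft$, split the sum at $i=r$, and match signs. The alternative associativity argument you sketch at the end is a pleasant bonus, but the index chase alone already settles the lemma.
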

\begin{proof}
This follows from a  straightforward computation.
\end{proof}

The other (equivalent) definition is as follows.   For any $p\in \Z_{\geq 0}$, denote by $\Omega_{\sy}^p(A)$ the cokernel $\coker( \Barr_{p+1}(A)\xrightarrow{d_p} \Barr_p(A))$  in the normalized bar resolution $\Barr_*(A)$ (cf. Section \ref{bar-resolution}). Clearly, $
 \Omega^p_{\sy}(A)$ is a graded $A$-$A$-bimodule concentrated in degree $-p$. Observe that $\Omega_{\sy}^0(A)\cong \Omega_{\nc}^0(A)=A$. Generally, we have the following result.

\begin{lem}\label{lemma1}
There is a natural isomorphism $\alpha: \Omega_{\sy}^{\bullet}(A) \rightarrow \Omega_{\nc}^{\bullet}(A)$ of graded $A$-$A$-bimodules.
\end{lem}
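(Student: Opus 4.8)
The plan is to exhibit an explicit isomorphism in each degree $p$ and check naturality and compatibility with the bimodule actions. First I would recall the two descriptions: $\Omega_{\sy}^p(A) = \coker(d_p \colon \Barr_{p+1}(A) \to \Barr_p(A))$, with $\Barr_p(A) = A \otimes s\overline{A}^{\otimes p} \otimes A$, while $\Omega_{\nc}^p(A) = A \otimes s\overline{A}^{\otimes p}$ with the right action $\blacktriangleleft$ described above. The natural candidate for $\alpha$ in degree $p$ is induced by the surjection $\Barr_p(A) = A \otimes s\overline{A}^{\otimes p} \otimes A \to A \otimes s\overline{A}^{\otimes p}$ sending $a_0 \otimes s\overline{a}_{1,p} \otimes a_{p+1} \mapsto (a_0 \otimes s\overline{a}_{1,p}) \blacktriangleleft a_{p+1}$; equivalently, this is just the map $x \otimes a_{p+1} \mapsto x \blacktriangleleft a_{p+1}$ on $\Barr_p(A) = \Barr_p(A) \otimes_A A$, using that $\Omega_{\nc}^p(A)$ is a right $A$-module. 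I would then verify that this map kills the image of $d_p$, so it descends to a well-defined map $\alpha_p \colon \Omega_{\sy}^p(A) \to \Omega_{\nc}^p(A)$.

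The core computation is to check that $(\text{image of }d_p) \subseteq \ker(\Barr_p(A) \to \Omega_{\nc}^p(A))$. Writing out $d_p(a_0 \otimes s\overline{a}_{1,p+1} \otimes a_{p+2})$ as the alternating sum $\sum_{i=0}^{p+1}(-1)^i a_0 \otimes s\overline{a}_{1,i-1} \otimes s\overline{a_i a_{i+1}} \otimes s\overline{a}_{i+2,p+1} \otimes a_{p+2}$ and applying $-\blacktriangleleft a_{p+2}$ to the last tensor slot, one recognizes — via Lemma \ref{lem0} with $r = p$, $s = $ appropriate — that the terms telescope/reorganize into exactly the defining relation that the product on $\Omega_{\nc}^\bullet(A)$ already builds in, so the total vanishes in $\Omega_{\nc}^{p}(A)$. (Concretely: the $i = 0,\dots,p$ terms reproduce, up to sign, the expansion of $(a_0 \otimes s\overline{a}_{1,p+1}) \blacktriangleleft a_{p+2}$ against the definition of $\blacktriangleleft$, and the $i = p+1$ term cancels the remaining piece.) This is the step I expect to be the main obstacle — not conceptually deep, but the bookkeeping of signs between the bar differential, the product on $\Omega_{\nc}^\bullet$, and the identity in Lemma \ref{lem0} needs to be done carefully.

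Next I would construct the inverse. The map $\beta_p \colon A \otimes s\overline{A}^{\otimes p} \to \Omega_{\sy}^p(A)$ sends $a_0 \otimes s\overline{a}_{1,p}$ to the class of $a_0 \otimes s\overline{a}_{1,p} \otimes 1$ in $\coker(d_p)$. One checks $\alpha_p \circ \beta_p = \id$ immediately, since $(a_0 \otimes s\overline{a}_{1,p}) \blacktriangleleft 1 = a_0 \otimes s\overline{a}_{1,p}$. For $\beta_p \circ \alpha_p = \id$, given $a_0 \otimes s\overline{a}_{1,p} \otimes a_{p+1} \in \Barr_p(A)$, one must show that $a_0 \otimes s\overline{a}_{1,p} \otimes a_{p+1}$ and $\big((a_0 \otimes s\overline{a}_{1,p}) \blacktriangleleft a_{p+1}\big) \otimes 1$ are congruent modulo $\Imm(d_p)$; this again follows from the defining boundary relation, realized as $d_p$ applied to $a_0 \otimes s\overline{a}_{1,p} \otimes s\overline{a}_{p+1} \otimes 1$ (or a suitable such element), identifying the difference with a boundary.

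Finally I would record that $\alpha = \bigoplus_p \alpha_p$ is $A$-$A$-bilinear: compatibility with the left $A$-action is clear since both structures act by multiplication on $a_0$; compatibility with the right action holds because on $\Omega_{\sy}^\bullet(A)$ the right action of $a$ is induced by $a_0 \otimes s\overline{a}_{1,p} \otimes a_{p+1} \mapsto a_0 \otimes s\overline{a}_{1,p} \otimes a_{p+1}a$, which under $\alpha_p$ becomes $(a_0 \otimes s\overline{a}_{1,p}) \blacktriangleleft (a_{p+1}a) = \big((a_0 \otimes s\overline{a}_{1,p}) \blacktriangleleft a_{p+1}\big) \blacktriangleleft a$ by associativity of the $\Omega_{\nc}^\bullet(A)$-module structure. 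Naturality in $A$ is evident from functoriality of the bar construction and of $\Omega_{\nc}^\bullet(-)$. That $\alpha$ respects the grading (degree $-p$ in degree $p$) and the graded algebra structure (matching $\Omega_{\sy}^p \otimes_A \Omega_{\sy}^q \cong \Omega_{\sy}^{p+q}$ with $\Omega_{\nc}^p \otimes_A \Omega_{\nc}^q \cong \Omega_{\nc}^{p+q}$) then follows by comparing the two products on generators.
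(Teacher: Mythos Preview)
Your proof is correct and is essentially the same as the paper's, with only minor organizational differences. You work with the quotient description $\Omega_{\sy}^p(A)=\Barr_p(A)/\Imm(d_{p+1})$ and verify well-definedness and right-$A$-linearity via associativity of $\blacktriangleleft$; the paper instead uses the identification $\Omega_{\sy}^p(A)\cong\Imm(d_p)\hookrightarrow\Barr_{p-1}(A)$ followed by $\id\otimes\pi$, and checks the bimodule property on the inverse using $d^2=0$. The two constructions yield the same isomorphism up to a global sign $(-1)^p$ in degree $p$ (your $\beta_p(x)=[x\otimes 1]$ versus the paper's $\alpha_p^{-1}(x)=[(-1)^p x\otimes 1]$). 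One small remark: the vanishing on $\Imm(d_{p+1})$ reduces cleanly to the identity $\bigl((x\blacktriangleleft a_{p+1})\blacktriangleleft a_{p+2}\bigr)=x\blacktriangleleft(a_{p+1}a_{p+2})$ rather than to Lemma~\ref{lem0} per se, so you might phrase that step as ``associativity of the right action'' directly.
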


\begin{proof} For $p\in\Z_{\geq 0}$, we define the morphism $\alpha_p: \Omega_{\sy}^p(A) \rightarrow \Omega^p_{\nc}(A)$ to be the composition $\Omega_{\sy}^p(A) \hookrightarrow \Barr_{p-1}(A)=A\otimes s\overline{A}^{\otimes p-1} \otimes A\xrightarrow{\id_{A}\otimes \id_{s\overline{A}}^{\otimes (p-1)}\otimes \pi} A\otimes s\overline{A}^{\otimes p},$
where $\pi: A\rightarrow s\overline{A}$ is the canonical projection (of degree -1) and  $\id_V$ is the identity morphism of a $k$-module $V$.  It is straightforward  that $\alpha_p$ is invertible with the inverse $\alpha_p^{-1}: A\otimes s\overline{A}^{\otimes p} \xrightarrow{\iota_p} A\otimes s\overline{A}^{\otimes p}\otimes A\xrightarrow{d_p} \Omega^p_{\sy}(A)$, where the first morphism $\iota_p$ sends $x$ to $(-1)^px\otimes 1$. Thus it remains to show that $\alpha_p$ is an $A$-$A$-bimodule homomorphism. Indeed, we have that
\begin{eqnarray*}
\alpha_p^{-1}\lefteqn{(a_0(a_1\otimes s\overline{a}_{2, p})\blacktriangleleft a_{p+1})}\\
&=&\sum_{i=1}^p (-1)^{i}d_p(a_0a_1\otimes s\overline{a}_{2, i-1}\otimes \overline{a_ia_{i+1}}\otimes s\overline{a}_{i+2, p+1}\otimes 1)\\
&=& (-1)^pd_p(a_0a_1\otimes s\overline{a}_{2, p}\otimes a_{p+1})+d_p\circ d_{p+1}(a_0a_1\otimes  s\overline{a}_{2, p+1}) \otimes 1\\
&=&a_0 \alpha_p^{-1}(a_1\otimes  s\overline{a}_{2, p}) a_{p+1},
\end{eqnarray*}
where the last identity follows from $d^2=0$. This proves the lemma.
\end{proof}

Based on Lemma \ref{lemma1}, we will identify  $\Omega_{\sy}^{\bullet}(A)$ with $\Omega_{\nc}^{\bullet}(A)$ as graded $A$-$A$-bimodules.

\section{Tate-Hochschild cohomology}
Let $k$ be a field. We construct a cochain complex, called singular Hochschild cochain complex, for any associative $k$-algebra $A$.  The $i$-th cohomology group is isomorphic to  the morphism spaces from $A$ to $s^iA$ in the singularity category $\DD_{\sg}(A\otimes A^{\op})$.
\subsection{Singular Hochschild cochain complex}\label{subsection3.1}
Recall that  $\Omega_{\nc}^p(A)$ is a graded $A$-$A$-bimodule concentrated in degree $-p$. We consider a family, indexed by $p\in \Z_{\geq 0}$, of Hochschild cochain complexes $C^{*}(A, \Omega^p_{\nc}(A))$. 
For any $p\in \Z_{\geq 0}$, we define an embedding of cochain complexes (of degree zero), $$\theta_p: C^{*}(A, \Omega_{\nc}^p(A))\hookrightarrow C^{*}(A, \Omega_{\nc}^{p+1}(A)), f\mapsto f\otimes \id_{s\overline{A}}.$$ Here we recall that $C^m(A, \Omega_{\nc}^p(A))=\Hom((s\overline{A})^{\otimes m+p}, A\otimes (s\overline{A})^{\otimes p})$  for $m\in\Z$. 
\begin{lem}\label{lemma3.1}
$\theta_{p}\circ \delta=\delta\circ \theta_p.$\end{lem}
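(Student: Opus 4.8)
The plan is to show that the map $\theta_p(f) = f \otimes \id_{s\overline{A}}$ intertwines the Hochschild differentials on $C^*(A,\Omega_{\nc}^p(A))$ and $C^*(A,\Omega_{\nc}^{p+1}(A))$ by a direct comparison of the defining formula for $\delta$ in each case. First I would fix $f \in C^m(A,\Omega_{\nc}^p(A))$, viewed as a map $(s\overline{A})^{\otimes m+p} \to \Omega_{\nc}^p(A) = A \otimes (s\overline{A})^{\otimes p}$, and write out both sides of the claimed identity evaluated on a general generator $s\overline{a}_{1,m+p+1} \in (s\overline{A})^{\otimes m+p+1}$. The left-hand side $\theta_p(\delta f) = (\delta f) \otimes \id_{s\overline{A}}$ applies the degree-$(m+1)$ Hochschild coboundary of $f$ to the first $m+p+1$ slots and then tacks on $s\overline{a}_{m+p+2}$ via the extra $\id_{s\overline{A}}$ tensor factor; the right-hand side $\delta(\theta_p f) = \delta(f \otimes \id_{s\overline{A}})$ applies the Hochschild coboundary formula for $C^*(A,\Omega_{\nc}^{p+1}(A))$, where now the cochain $f \otimes \id_{s\overline{A}}$ reads the first $m+p$ slots via $f$ and passes the last slot through the identity into the $\Omega_{\nc}^{p+1}(A) = A \otimes (s\overline{A})^{\otimes p+1}$ coefficient.

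The key computation is a term-by-term matching. The coboundary $\delta^{m}$ has three kinds of terms: the left-module term $a_1 f(\cdots)$, the inner sum $\sum_{j} (-1)^j f(\cdots s\overline{a_j a_{j+1}} \cdots)$, and the right-module term $(-1)^{m+1} f(s\overline{a}_{1,\ldots})\, a_{\text{last}}$ where the right action is the operation $\blacktriangleleft$ on $\Omega_{\nc}^{p}(A)$ (resp.\ $\Omega_{\nc}^{p+1}(A)$). The left term and the inner-sum terms that involve only slots $1,\dots,m+p$ visibly agree on both sides, since adjoining $\id_{s\overline{A}}$ commutes with composing $f$ on those slots and with the left $A$-action (which only touches $a_0$, i.e.\ the $A$-factor of the coefficient). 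The delicate point, which I expect to be the main obstacle, is the interaction near the last tensor slot: on the left side the last slot $s\overline{a}_{m+p+2}$ is simply carried along by the outer $\id_{s\overline{A}}$, whereas on the right side the right-action term is $f(s\overline{a}_{1,m+p}) \otimes s\overline{a}_{m+p+1} \blacktriangleleft a_{m+p+2}$, an element of $\Omega_{\nc}^{p+1}(A)$, and the inner-sum term with $j = m+p$ contracts $s\overline{a_{m+p}a_{m+p+1}}$ rather than $s\overline{a_{m+p+1}a_{m+p+2}}$. Here one must use Lemma \ref{lem0} (with $s = 2$, or more precisely the description of $\blacktriangleleft$ on $\Omega_{\nc}^{p+q}(A) \cong \Omega_{\nc}^p(A) \otimes_A \Omega_{\nc}^q(A)$ together with the right action on $\Omega_{\nc}^1(A) = A \otimes s\overline{A}$, which sends $(x \otimes s\overline{a}) \blacktriangleleft b = x \otimes s\overline{ab} - xa \otimes s\overline{b}$) to split the right-action term on the right-hand side into exactly: (i) a term matching the outer-$\id_{s\overline{A}}$ contribution from $\theta_p(\delta f)$, and (ii) a term cancelling the "extra'' inner-sum summand.

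Concretely, I would organize the right-hand side $\delta(f \otimes \id_{s\overline{A}})(s\overline{a}_{1,m+p+1})$ and peel off the $j = m+p+1$ inner summand together with the right-action term; expanding $\blacktriangleleft$ on the $\Omega_{\nc}^{p+1}(A)$ coefficient via the formula above, the term $f(s\overline{a}_{1,m+p}) \otimes s\overline{a_{m+p+1}a_{m+p+2}}$ combines with the inner summand, and the residual term $(-1)^{\text{sign}} \big( f(s\overline{a}_{1,m+p}) \blacktriangleleft a_{m+p+1} \big) \otimes s\overline{a}_{m+p+2}$ is precisely the right-action term of $\delta f$ (computed in $\Omega_{\nc}^p(A)$) with $\id_{s\overline{A}}$ appended — i.e.\ the corresponding piece of $\theta_p(\delta f)$. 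After checking that all signs agree (the signs in $\delta$ depend only on the position index and degree $m$, which are unchanged by $\theta_p$, and the sign $(-1)^{n-i}$ internal to $\blacktriangleleft$ is accounted for by Lemma \ref{lem0}), every term on one side is matched with a term on the other, proving $\theta_p \circ \delta = \delta \circ \theta_p$. The only real work is the bookkeeping at the last slot; everything else is formal. I will therefore present the proof as: (1) note that $\theta_p$ commutes with the left action and with all inner contractions not touching the final slot; (2) isolate the final-slot terms and apply Lemma \ref{lem0} to rewrite the right action on $\Omega_{\nc}^{p+1}(A)$; (3) observe the resulting cancellation and identification of the surviving term with the final-slot term of $\theta_p(\delta f)$, with a brief sign check.
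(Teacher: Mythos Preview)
Your proposal is correct and follows essentially the same approach as the paper: a direct term-by-term comparison, with Lemma~\ref{lem0} (in the case $s=2$) handling the interaction at the last tensor slot. The paper's proof is just a terser version of what you describe---it expands $(\theta_p\circ\delta)(f)$, applies Lemma~\ref{lem0} to rewrite the final right-action term, and recognizes the result as $\delta(\theta_p(f))$. One minor slip: you write ``evaluated on $s\overline{a}_{1,m+p+1}$'' but then (correctly) refer to $s\overline{a}_{m+p+2}$; both sides are maps on $(s\overline{A})^{\otimes m+p+2}$, so fix the index range when you write it up.
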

\begin{proof}
For  $f\in C^{m}(A, \Omega^p_{\nc}(A))$ and $n:=m+p$, we have that
\begin{eqnarray*}
\lefteqn{(-1)^{m+1}(\theta_p\circ \delta)(f)(s\overline{a}_{1, n+2})}\\
&=& a_1f(s\overline{a}_{2, n+1})\otimes s\overline{a}_{n+2}+\sum^{n}_{i=1} (-1)^i f(s\overline{a}_{1, i-1}\otimes  s\overline{a_ia_{i+1}}  \otimes s\overline{a}_{i+2, n+1})\otimes s\overline{a}_{n+2}\\
&& +(-1)^{n+1}f(s\overline{a}_{1, n}) \otimes s\overline{a_{n+1}a_{n+2}}+(-1)^{n+2} (f(s\overline{a}_{1, n})\otimes s\overline{a}_{n+1})\blacktriangleleft a_{n+2}\\
&=&(-1)^{m+1}\delta (\theta_p(f))(s\overline{a}
_{1, n+2}),
\end{eqnarray*}
where we used Lemma \ref{lem0} in the first identity.  This proves  $\theta_p\circ \delta=\delta\circ \theta_p$.
\end{proof}

\begin{defin}\label{defin3.2}
Let $A$ be an associative $k$-algebra. Then the {\it singular Hochschild cochain complex} of $A$, denoted by $C_{\sg}^*(A, A)$, is defined as the colimit of the inductive system in the category of cochain complexes of $k$-modules,
$$0\hookrightarrow C^*(A, A)\stackrel{\theta_0}{\hookrightarrow}  C^{*}(A, \Omega_{\nc}^1(A))\xhookrightarrow{\theta_1}\cdots \stackrel{\theta_{p-1}}{\hookrightarrow}  C^{*}(A, \Omega_{\nc}^p(A))\stackrel{\theta_p}{\hookrightarrow} \cdots.$$
Namely, $C_{\sg}^*(A, A):=\colim_{\theta_p} C^{*}(A, \Omega_{\nc}^p(A))$. Its cohomology groups are  denoted by $\HH_{\sg}^*(A, A)$. \end{defin}
\begin{rem}\label{rem3.3}
Since the map  $\theta_p$ is injective for any $p\in \Z_{\geq 0}$, there is a (bounded below) filtration of cochain complexes of $C_{\sg}^*(A, A)$,
$$0\subset C^*(A, A)\subset\cdots \subset C^{*}(A, \Omega^p_{\nc}(A))\subset\cdots \subset C_{\sg}^*(A, A). $$
This yields a natural map  $\rho: \HH^*(A, A)\rightarrow \HH_{\sg}^*(A, A)$.  In the following, we will see that $\rho$ is in fact a morphism of Gerstenhaber algebras (cf. Corollary \ref{cor5.3}). In \cite{RiWa}, we generalized  the definition of $C_{\sg}^*(A, A)$ to any dg associative algebra $A$,  in order to understand the relevance of the algebraic structures discussed in this paper in symplectic geometry and string topology.  \end{rem}
In \cite{JoSt} the authors formalize the use of graphs in tensor categories.  Morphisms in a tensor category are presented by graphs,  and operations (e.g. compositions and tensor products) on morphisms are presented by operations (e.g. gratings and unions) on graphs. Since the category $k$-$\modu$ of $k$-modules, which we are basically working on in this paper, is particularly a tensor category with the tensor product $\otimes_k$, we will use graphs to present morphisms and operations  in $k$-$\modu$.  For more details on  graph theory,  one may refer to \cite{JoSt, Kau05, Kau07a}.

\begin{figure}
\centering
  \includegraphics[width=120mm]{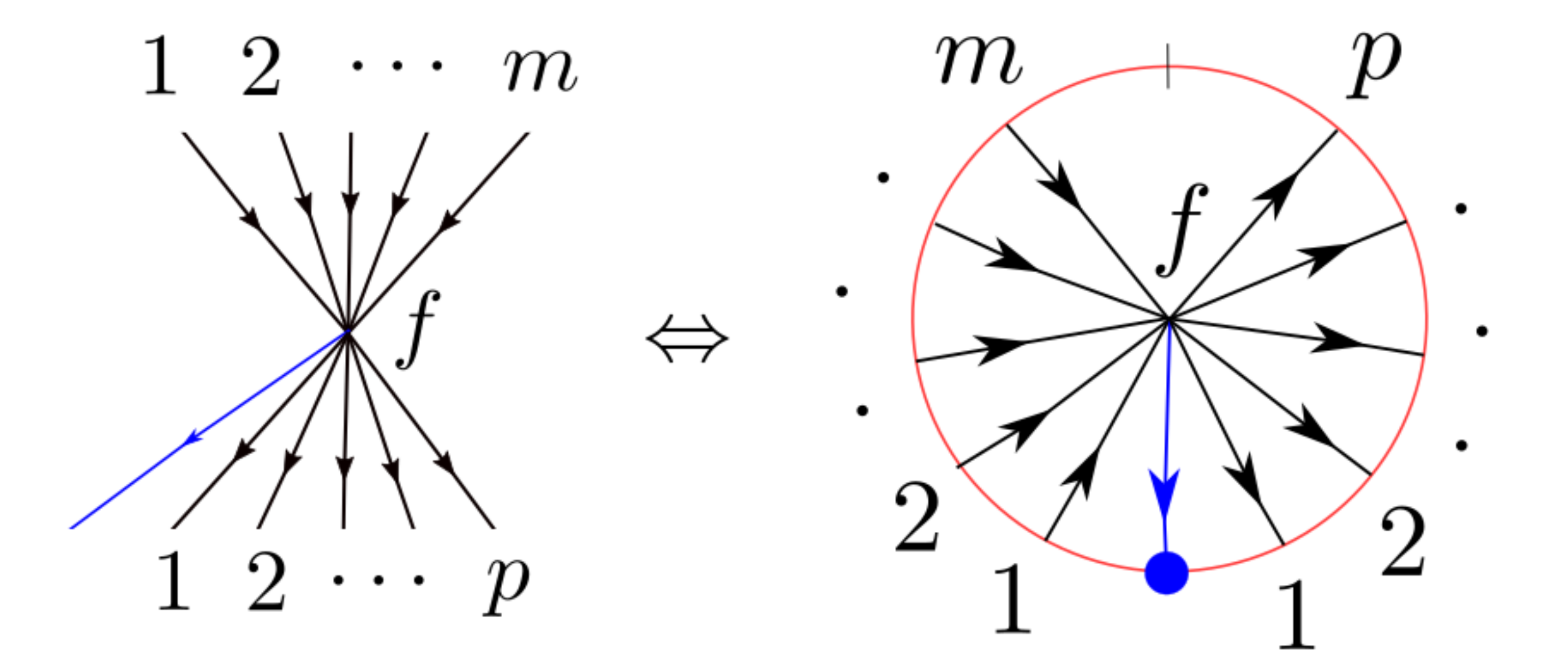}
  \caption{Two types of  graphic presentations of  $f\in C^{m-p}(A,   A\otimes s\overline{A}^{\otimes p}).$ The left one is  {\it treelike} presentation and the right one is {\it cactus-like} presentation. }
  \label{Tree-presentation}
\end{figure}

Figure \ref{Tree-presentation} illustrates two types (tree-like and cactus-like) of graphic presentations of  $f\in C^{m-p}(A,  A\otimes (s\overline{A})^{\otimes p})$. The tree-like presentation is the usual graphic presentation of morphisms in tensor categories used in \cite{JoSt}. We read the graph from top to bottom and  left to right. The inputs $(s\overline{A})^{\otimes m}$ are ordered from left to right at the top, while the outputs $A\otimes (s\overline{A})^{\otimes p}$ are ordered in the same way at the bottom. We use the color blue to distinguish the special output $A$. The orientations of edges are from top to bottom. To study the $B_{\infty}$-algebra structure on $C_{\sg}^*(A, A)$ (cf. Section \ref{subsection-brace}),  we also need to use the cactus-like presentation. An element $f\in C^{m-p}(A, \Omega_{\nc}^p(A))$ is presented as follows.
\begin{proc}\label{proc3.4}
First, the image  of $0\in \R$ in $S^1: = \R/\Z$ is decorated by a blue dot (cf. Figure \ref{Tree-presentation}). We call the image zero point of $S^1$. The blue radius pointing towards the dot represents the special output $A$. Then the inputs $(s\overline{A})^{\otimes m}$ are indicated by $m$ (black) radii on the left semicircle pointing towards the center of $S^1$ in clockwise, and finally the other outputs are indicated by $p$ radii on the right semicircle pointing outwards the center of $S^1$ in  counterclockwise.
\end{proc}


 By Definition \ref{defin3.2}, two elements $f\in C^{m}(A, \Omega_{\nc}^{p_1}(A))$ and $g\in C^{m}(A, \Omega_{\nc}^{p_2}(A))$ for $p_1\geq p_2$ represent the same element $[f]=[g]$ in $C_{\sg}^m(A, A)$ if and only if $f=g\otimes \id_{s\overline{A}}^{\otimes (p_2-p_1)}$, as depicted in  Figure \ref{theta-map}. This allows us to add (or remove) some vertical lines on the right of the tree-like graph or chords on the circle. 
  In the following sections, we will see that the two types of graphic presentations have different advantages. It is  easier to write down the corresponding morphisms from tree-like presentations, while   it is  more convenient to construct operations on $C_{\sg}^*(A, A)$ using cactus-like presentations.

\begin{figure}
\centering
  \includegraphics[width=120mm]{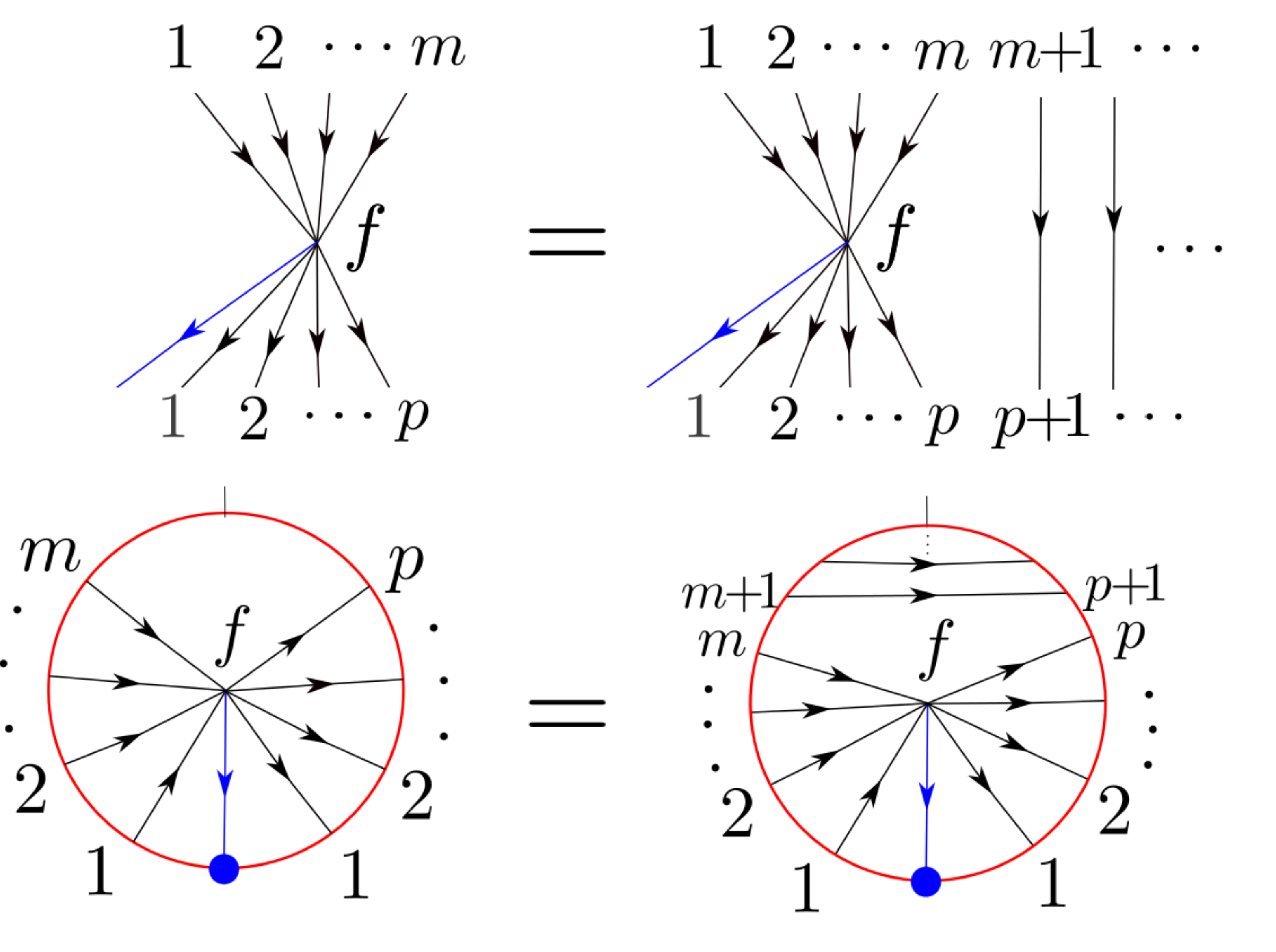}
  \caption{The representatives of $f\in C_{\sg}^{m-p}(A, A)$. The vertical lines on the upper right treelike graph represent identities $\id_{s\overline{A}}$ of $s\overline{A}$. Similarly, the identities are also represented by horizontal chords on the lower right circle. }\label{theta-map}
\end{figure}

\subsection{Relationship with singularity category}
In this section, we fix a (both left and right) Noetherian algebra $A$ over a field $k$. Let $\DD^b(A)$ be the bounded derived category of finitely generated left $A$-modules. Let $\Perf(A)$ denote the full subcategory consisting of those complexes which are quasi-isomorphic to bounded complexes of finitely generated projective $A$-modules. Then the {\it singularity category} $\DD_{\sg}(A)$ is defined as the Verdier quotient of the triangulated category $\DD^{b}(A)$ by $\Perf(A)$.

\begin{rem}
The notion of singularity category was introduced by Buchweitz in an unpublished manuscript \cite{Buc}. He proved that the singularity category $\DD_{\sg}(A)$ is triangle equivalent to the stable category $\underline{\MCM}(A)$ of maximal Cohen-Macaulay  modules when the algebra $A$ is Gorenstein.  Later, Orlov \cite{Orl04} independently rediscovered a global version of singularity category motivated by homological mirror symmetry.

Buchweitz in the same manuscript,  provided a general framework for Tate cohomology. Let $M, N$ be two modules over a Gorenstein algebra $S$. The $i$-th {\it Tate cohomology} group of $M$ with values in $N$ is defined as  $\Hom_{\DD_{\sg}(S)}(M, s^iN)$. In loc. cit. Buchweitz denoted it by $\underline{\Ext}_S^i(M, N).$  Clearly, this notion generalizes the Tate cohomology of finite groups. Under this framework, it is  natural to define  {\it Tate-Hochschild cohomology groups} as $\underline{\Ext}^*_{A\otimes A^{\op}}(A, A)$  for a Noetherian algebra $A$, compared with Hochschild cohomology. 
\end{rem}

\begin{thm}\label{thm1}
Let $A$ be a Noetherian $k$-algebra.
Then there exists a natural isomorphism $\Phi:  \HH_{\sg}^*(A, A) \rightarrow \underline{\Ext}_{A\otimes A^{\op}}^*(A, A)$.
\end{thm}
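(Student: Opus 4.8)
The plan is to exhibit $C_{\sg}^*(A,A)$ as computing $\Hom$ in the singularity category by constructing, at the level of complexes, an explicit model for the totalization of the directed system defining $\DD_{\sg}(A\otimes A^{\op})$. First I would recall Buchweitz's description of $\underline{\Ext}^*_{A\otimes A^{\op}}(A,A)$: using the normalized bar resolution $\Barr_*(A)$ and its truncations, one has $\Omega_{\sy}^p(A) = \coker(d_p)$, and the canonical surjections $\Barr_{p}(A)\twoheadrightarrow \Omega_{\sy}^p(A)$ fit into short exact sequences of $A$-$A$-bimodules $0\to \Omega_{\sy}^{p+1}(A)\to \Barr_p(A)\to \Omega_{\sy}^p(A)\to 0$ with $\Barr_p(A)$ projective. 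Since in $\DD_{\sg}(A\otimes A^{\op})$ the projective bimodules $\Barr_p(A)$ become zero, the syzygy shift $\Omega_{\sy}^p(A)$ is isomorphic to $s^{-p}A$ in $\DD_{\sg}(A\otimes A^{\op})$. Hence $\underline{\Ext}^{i}_{A\otimes A^{\op}}(A,A) \cong \Hom_{\DD_{\sg}}(A, s^{i}A) \cong \Hom_{\DD_{\sg}}(\Omega_{\sy}^p(A), s^{i-p}\Omega_{\sy}^p(A))$ for every $p$, compatibly with the maps induced by the short exact sequences.

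Next I would identify $\Ext^*_{A\otimes A^{\op}}(\Omega_{\sy}^p(A), \Omega_{\sy}^p(A))$ with the cohomology of a complex that, after the identification $\Omega_{\sy}^p(A)\cong\Omega_{\nc}^p(A)$ of Lemma \ref{lemma1}, is exactly $C^*(A,\Omega_{\nc}^p(A))$. Indeed, applying $\Hom_{A\otimes A^{\op}}(-,\Omega_{\nc}^p(A))$ to $\Barr_*(A)$ and using $\Hom_{A\otimes A^{\op}}(A\otimes s\overline{A}^{\otimes m}\otimes A, \Omega_{\nc}^p(A))\cong \Hom(s\overline{A}^{\otimes m},\Omega_{\nc}^p(A))$, the cohomology in degree $m$ is $\Ext^m_{A\otimes A^{\op}}(A,\Omega_{\nc}^p(A))$; and since $\Omega_{\nc}^p(A)\cong s^{-p}\Omega_{\sy}^p(A)$ in $\DD^b(A\otimes A^{\op})$, this is $\Ext^{m+p}_{A\otimes A^{\op}}(A,A)$, but more usefully one uses the projective resolution of $\Omega_{\sy}^p(A)$ obtained by truncating $\Barr_*(A)$ to see directly that $\Hom_{\DD_{\sg}}(\Omega_{\sy}^p(A), s^{*}\Omega_{\sy}^p(A))$ is the stabilized version. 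The key compatibility is that the transition map $\theta_p : C^*(A,\Omega_{\nc}^p(A))\to C^*(A,\Omega_{\nc}^{p+1}(A))$, $f\mapsto f\otimes\id_{s\overline{A}}$, corresponds under these identifications precisely to the map $\Ext^*(\Omega_{\sy}^p(A),\Omega_{\sy}^p(A))\to \Ext^*(\Omega_{\sy}^{p+1}(A),\Omega_{\sy}^{p+1}(A))$ induced by applying the syzygy functor (equivalently, by the connecting maps of the short exact sequences above). I would verify this on the chain level using the explicit form of $d_p$ and the embedding $\alpha_p$ from Lemma \ref{lemma1}; the bimodule-homomorphism computation in the proof of Lemma \ref{lemma1} is exactly the input needed here.

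Granting this, I would pass to the colimit: since filtered colimits are exact and commute with taking cohomology of a directed system of complexes, $\HH^i_{\sg}(A,A) = H^i(\colim_p C^*(A,\Omega_{\nc}^p(A))) \cong \colim_p \Ext^i(\Omega_{\sy}^p(A),\Omega_{\sy}^p(A)) \cong \colim_p \Hom_{\DD_{\sg}}(\Omega_{\sy}^p(A), s^i\Omega_{\sy}^p(A))$. Finally, by Buchweitz's theorem (the stabilization description of morphisms in the singularity category of a Gorenstein — or here just Noetherian, using Tate-Vogel / Vogel's approach — algebra), the last colimit is canonically $\Hom_{\DD_{\sg}(A\otimes A^{\op})}(A, s^i A) = \underline{\Ext}^i_{A\otimes A^{\op}}(A,A)$, because the syzygy maps $\Hom_{\DD^b}(A,s^iA)\to \Hom_{\DD^b}(\Omega^1 A, s^i\Omega^1 A)\to\cdots$ compute precisely the morphisms in the Verdier quotient $\DD^b/\Perf$. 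Naturality of $\Phi$ follows because every step — the identification $\alpha$, the bar-resolution description of $\Ext$, and the colimit — is functorial in $A$.

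The main obstacle I expect is the second step: pinning down, at the cochain level and with correct signs, that $\theta_p$ agrees with the map induced on $\Ext$-groups by the syzygy operation. This requires producing an explicit chain map between the two projective resolutions (the bar resolution of $A$ shifted, versus the truncated bar resolution of $\Omega_{\sy}^p(A)$) realizing the syzygy functor, and checking that $\Hom(-,\Omega_{\nc}^p(A))$ of this chain map is homotopic to the inclusion $f\mapsto f\otimes\id_{s\overline{A}}$ — the sign bookkeeping (the factors $(-1)^p$ appearing in $\iota_p$ and in $\alpha_p^{-1}$) is where care is needed. A secondary subtlety is justifying, for a general Noetherian (not necessarily Gorenstein) algebra, that the colimit over syzygies genuinely computes Hom in $\DD_{\sg}$; this is the content of the Tate–Vogel/Beligiannis description of the singularity category, which I would invoke as a known result rather than reprove.
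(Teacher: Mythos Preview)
Your proposal is essentially the same strategy as the paper's proof: both commute cohomology with the colimit, identify $\theta_p$ with the connecting map coming from the short exact sequence $0\to\Omega_{\sy}^{p+1}(A)\to\Barr_p(A)\to\Omega_{\sy}^p(A)\to 0$, and then invoke the Beligiannis/Buchweitz description of morphisms in $\DD_{\sg}$ as a colimit over syzygies.

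One point where the paper is more careful than your sketch: rather than claiming termwise isomorphisms $\HH^m(A,\Omega_{\nc}^p(A))\cong \Ext^m(\Omega_{\sy}^p,\Omega_{\sy}^p)$ (which is not quite right --- the natural target at level $p$ is $\underline{\Hom}_{A\otimes A^{\op}}(\Omega_{\sy}^{m+p}(A),\Omega_{\sy}^p(A))$, and the comparison map $\Phi_{m,p}$ is in general neither injective nor surjective), the paper constructs the maps $\Phi_{m,p}$ to the stable module category, checks compatibility with the two transition systems, and then proves directly that the induced map $\Phi_m$ on colimits is surjective and injective via a short diagram chase using the long exact sequence. Your version implicitly assumes the levelwise maps are isomorphisms, which they are not; the argument only goes through after stabilizing, and that is exactly what the paper's explicit surjectivity/injectivity check accomplishes.
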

\begin{proof}
First, let us fix an integer $m\in\Z$. From the fact that the colimit commutes with the cohomology functor in the category of  cochain complexes, it follows that
\begin{equation}\label{equation1}
\HH_{\sg}^m(A, A)\cong \colim_{H^m(\theta_p)} \HH^{m}(A, \Omega_{\nc}^p(A)).
\end{equation}
We observe that the  map $H^m(\theta_p): \HH^{m}(A, \Omega_{\nc}^p(A))\rightarrow \HH^{m}(A, \Omega^{p+1}_{\nc}(A))$ coincides with the connecting morphisms in the long exact sequence $$\cdots\rightarrow \HH^{m}(A, \Barr_p(A))\rightarrow \HH^{m}(A, \Omega_{\nc}^p(A))\rightarrow \HH^{m+1}(A, s^{-1}\Omega_{\nc}^{p+1}(A))\rightarrow\cdots $$  induced by the short exact sequence $0\rightarrow s^{-1}\Omega^{p+1}_{\nc}(A)\rightarrow \Barr_p(A)\rightarrow \Omega_{\nc}^p(A)\rightarrow 0$. Here, we identify $\Omega_{\nc}^p(A)$ with $\Omega_{\sy}^p(A)$ by Lemma \ref{lemma1} . Note that there is a natural isomorphism between $\HH^{m+1}(A, s^{-1}\Omega_{\nc}^{p+1}(A))$ and $\HH^m(A, \Omega^{p+1}_{\nc}(A))$.

From \cite[Corollary 3.3]{Bel} and \cite{Buc}, it follows that
\begin{equation}\label{equation2}
\underline{\Ext}^m_{A\otimes A^{\op}}(A, A) \cong \colim_{\theta_p'} \underline{\Hom}_{A\otimes A^{\op}}(s^{-p-m}\Omega_{\sy}^{p+m}(A), s^{-p}\Omega_{\sy}^p(A))
\end{equation}
where  $\underline{\Hom}_{A\otimes A^{\op}}$ represents the morphism spaces in the stable category $A\otimes A^{\op}$-$\underline{\modu}$ of $A$-$A$-bimodules; the map $\theta_p'$ is induced by the fact that $A\otimes A^{\op}$-$\underline{\modu}$ is a left triangulated category with left shift functor the syzygy functor $\Omega^1_{\sy}$.
Combining the isomorphisms (\ref{equation1}) and (\ref{equation2}), it is sufficient to show that  $\colim_{H^m(\theta_p)} \HH^{m}(A, \Omega_{\nc}^p(A))$  is isomorphic to  $\colim_{\theta_p'} \underline{\Hom}_{A\otimes A^{\op}}(s^{-p-m}\Omega_{\sy}^{p+m}(A), s^{-p}\Omega_{\sy}^p(A)).$ First, let us define a morphism between them. Note that there is a canonical map
$$\Phi_{m, p}: \HH^{m}(A, \Omega_{\nc}^p(A))\rightarrow  \underline{\Hom}_{A\otimes A^{\op}}(s^{-p-m}\Omega_{\sy}^{p+m}(A), s^{-p}\Omega_{\sy}^p(A))$$
since  any cocycle $f\in C^{m}(A, \Omega_{\nc}^p(A))$ can be represented by an $A$-$A$-bimodule morphism $\overline{f}: \Omega_{\sy}^{m+p}(A)\rightarrow \Omega_{\sy}^{p}(A)$ and any coboundary factors through (see Diagram \ref{equation3}) the projective $A$-$A$-bimodule $\Barr_{m+p-1}(A)$.
\begin{equation}\label{equation3}
\xymatrix@R=1pc{
\Barr_{m+p+1}(A)\ar[d]_{d_{m+p+1}} &\\
\Barr_{m+p}(A) \ar[d]_{d_{m+p}}\ar[r]^-{f}&  \Omega_{\sy}^p(A) \\
\Barr_{m+p-1}(A)\ar@{.>}[ru] &\\
}
\end{equation}
We observe that  both of the  two maps $H^m(\theta_p)$ and $\theta_p'$ (cf. (\ref{equation1}) and (\ref{equation2})) correspond to the same lifting from the bottom horizontal maps $f$ to the top horizontal maps $\widehat{f}$:
\begin{equation}\label{equation4}
\xymatrix@C=3pc@R=1.5pc{
\Omega_{\sy}^{m+p+1}(A) \ar@{_(->}[d]\ar[r]^{\widehat{f}} &\Omega_{\sy}^{p+1}(A)\ar@{_(->}[d]\\
\Barr_{m+p}(A)\ar@{->>}[d]\ar@{.>}[r]&\Barr_{p}(A)\ar@{->>}[d]\\
\Omega_{\sy}^{m+p}(A) \ar[r]^-{f}&  \Omega_{\sy}^p(A) \\
}
\end{equation}
where $\widehat{f}$ is given by
$\widehat{f}(a_0\otimes s\overline{a}_{1, m+p})=f(a_0\otimes \overline{a}_{1, m+p-1})\otimes s\overline{a_{m+p}}.$
Again we use the identification of $\Omega_{\sy}^p(A)$ with $\Omega^p_{\nc}(A)$ by Lemma \ref{lemma1}. Therefore we get that the maps $\Phi_{m, *}$ are compatible with the colimit constructions  and then we have a canonical map
$$\Phi_m: \colim_{\theta_p} \HH^{m}(A, \Omega_{\nc}^p(A))\rightarrow \colim_{\theta_p'} \underline{\Hom}_{A\otimes A^{\op}}(s^{-p-m}\Omega_{\sy}^{p+m}(A), s^{-p}\Omega_{\sy}^p(A)).$$

Claim that $\Phi_m$ is surjective. Indeed, assuming   $$f\in  \colim_{\theta'_p} \underline{\Hom}_{A\otimes A^{\op}}(s^{-p-m}\Omega_{\sy}^{p+m}(A), s^{-p}\Omega_{\sy}^p(A)),$$ then there exists $p_0\in \Z_{\geq 0}$ such that $f$  can be represented by a certain element $f'\in \Hom_{A\otimes A^{\op}}(\Omega_{\sy}^{m+p_0}(A), \Omega_{\sy}^{p_0}(A)).$ Thus we obtain a Hochschild cocyle $\alpha:=f'\circ d_{m+p_0}\in \Hom_{A\otimes A^{\op}}(\Barr_{m+p_0}(A), \Omega^{p_0}_{\sy}(A)).$ By the definition of $\Phi_m$, we have  $\Phi_m(\alpha)=f$. This proves that  $\Phi_m$ is surjective.

It remains to show that $\Phi_m$ is injective. Suppose   $\beta\in \colim_{H^m(\theta_p)} \HH^{m}(A, \Omega_{\nc}^p(A))$ such that $\Phi_m(\beta)=0$. Since $\Phi_m$ is surjective, $\beta$ can be represented by an element $\beta'\in \HH^{m}(A, \Omega_{\sy}^{p_0}(A))$ for some $p_0$ such that $\Phi_{m, p_0}(\beta')=0$.  This means that
$\Phi_{m, p_0}(\beta')$ factors through the differential $d_{p_0}: \Barr_{p_0}(A)\twoheadrightarrow \Omega_{\sy}^{p_0}(A)$.\begin{equation}\label{equation5}
\xymatrix@C=4pc{
\Omega_{\sy}^{m+p_0}(A) \ar[r]^-{\Phi_{m, p_0}(\beta')} \ar[rd]_{\sigma}& \Omega_{\sy}^{p_0}(A)\\
& \Barr_{p_0}(A)\ar[u]_{d_{p_0}}
}
\end{equation}
Consider the long exact sequence
\begin{equation*}
\xymatrix@C=2pc{
\cdots\ar[r]&\HH^{m}(A, \Barr_{p_0}(A)) \ar[r]^-{d_{p_0}^*}& \HH^{m}(A, \Omega_{\sy}^{p_0}(A)) \ar[r]\ar[r]^-{H^m(\theta_{p_0})}& \HH^{m+1}(A, s^{-1}\Omega^{p_0+1})\ar[r] &\cdots}
\end{equation*}
From Diagram (\ref{equation5}), it follows  that  $[\sigma]\in \HH^m(A, \Barr_{p_0}(A))$ and  $$d_{p_0}^*([\sigma])=\beta'\in \HH^{m}(A, \Omega_{\sy}^{p_0}(A)).$$
Then  $0=H^m(\theta_{p_0})d_{p_0}^*([\sigma])=H^m(\theta_{p_0})(\beta')$, thus $\beta=0$ in $\colim_{H^m(\theta_p)} \HH^{m}(A, \Omega_{\nc}^p(A))$. Therefore $\Phi_m$ is injective. This proves the theorem.
\end{proof}
\begin{rem}
From the proof, we have  the following commutative diagram.
\begin{equation}\label{equation7}
\xymatrix{
\Ext_{A\otimes A^{\op}}^*(A, A) \ar[r]^{\rho'} & \underline{\Ext}_{A\otimes A^{\op}}^*(A, A)\\
\HH^*(A, A)\ar[r]^-{\rho}\ar[u]^{\cong} & \HH_{\sg}^*(A, A)\ar[u]_-{\Phi_*}
}
\end{equation}
where $\rho'$ is induced by the quotient functor from the bounded derived category $\DD^b(A\otimes A^{\op})$ to the singularity category $\DD_{\sg}(A\otimes A^{\op})$.
\end{rem}

\section{Gerstenhaber algebra structure}\label{section4}
 In this and the next section, we will prove  that there is a  Gerstenhaber algebra structure on $\HH_{\sg}^*(A, A)$ to make the natural map $\rho: \HH^*(A, A)\rightarrow \HH_{\sg}^*(A, A)$ into a morphism of Gerstenhaber algebras. 

\subsection{Cup product}
For any $m, n, p, q\in \Z_{\geq 0}$,
the {\it cup product}
\begin{equation}\label{equation-cup-product}
\cup: C^{m-p}(A, \Omega_{\nc}^p(A))\otimes C^{n-q}(A, \Omega_{\nc}^q(A))\rightarrow C^{m+n-p-q}(A, \Omega_{\nc}^{p+q}(A))
\end{equation}
is defined by the following formula,
$$f\cup g:=\left(\mu\otimes \id_{s\overline{A}}^{\otimes p+q}\right)\left(\id_{A}\otimes f\otimes \id_{s\overline{A}}^{\otimes q}\right)  \left(g\otimes \id_{s\overline{A}}^{\otimes m}\right),
$$
for any $f\in C^{m-p}(A, \Omega_{\nc}^p(A))$ and $g\in C^{n-q}(A, \Omega_{\nc}^q(A))$. Here  $\id_{s\overline{A}}$ is  the identity morphism of $s\overline{A}$.  When $p=q=0$, we recover the cup product on $C^*(A, A)$ (cf. Section \ref{section-gerstenhaber}). The cup product can be depicted by the treelike or cactus-like presentation (cf. Figure \ref{Cup-product}).

\begin{lem}\label{lemma4.1}
For any $f\in C^{m-p}(A, \Omega_{\nc}^p(A))$ and $g\in C^{n-q}(A, \Omega_{\nc}^q(A)),$ we have
$$\delta(f\cup g)=\delta(f)\cup g +(-1)^{m-p} f\cup \delta(g).$$
\end{lem}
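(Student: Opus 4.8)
The plan is to verify the graded Leibniz identity $\delta(f\cup g)=\delta(f)\cup g+(-1)^{m-p}f\cup\delta(g)$ by a direct computation on an arbitrary input $s\overline{a}_{1,m+n-p-q+1}$, organizing the argument to mirror the proof of Lemma \ref{lemma3.1}. First I would note that the key structural simplification is the compatibility established in Lemma \ref{lemma3.1}: the maps $\theta_p$ commute with $\delta$, and the cup product on $C^*_{\sg}(A,A)$ is built so that it restricts correctly on the filtration pieces. Hence it suffices to prove the identity for each pair of representatives $f\in C^{m-p}(A,\Omega^p_{\nc}(A))$, $g\in C^{n-q}(A,\Omega^q_{\nc}(A))$, and the statement on $C^*_{\sg}(A,A)$ follows by passing to the colimit.

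Next I would unwind the definition of $\cup$ via the formula $f\cup g=(\mu\otimes\id_{s\overline A}^{\otimes p+q})(\id_A\otimes f\otimes\id_{s\overline A}^{\otimes q})(g\otimes\id_{s\overline A}^{\otimes m})$ and the explicit Hochschild differential $\delta$. Applying $\delta$ to $f\cup g$ produces three groups of terms: the leftmost term $a_1\cdot(f\cup g)(\cdots)$, an alternating sum of "inner multiplication" terms $\sum_j(\pm)(f\cup g)(\cdots \overline{a_ja_{j+1}}\cdots)$, and the rightmost term $(\pm)(f\cup g)(\cdots)\blacktriangleleft a_{\text{last}}$. The combinatorial heart of the argument is to split the inner-multiplication sum according to whether the position $j$ lies to the left of the arguments fed to $g$, inside the block absorbed by $g$, between the outputs of $g$ and the inputs of $f$, inside the block absorbed by $f$, or to the right of $f$'s inputs; each range reassembles into either a term of $\delta(f)\cup g$ or a term of $(-1)^{m-p}f\cup\delta(g)$, with the boundary terms (where $j$ straddles the junction between $g$'s output and $f$'s input) producing the leftmost/rightmost boundary contributions of $\delta(g)$ and $\delta(f)$ respectively after using the $A$-bimodule structure of $\Omega^{\bullet}_{\nc}(A)$.

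The main obstacle I anticipate is bookkeeping the signs and, crucially, handling the terms where the multiplication $\overline{a_ja_{j+1}}$ interacts with the special output slot $A$ of $\Omega^p_{\nc}(A)$ or $\Omega^q_{\nc}(A)$ — that is, where the right action $\blacktriangleleft$ enters. Here I expect Lemma \ref{lem0} to be the essential tool: it rewrites $(x)\blacktriangleleft a$ on $\Omega^{r+s}_{\nc}(A)$ in terms of $\blacktriangleleft$ applied at an earlier stage plus a telescoping sum of inner-multiplication terms, which is exactly what is needed to match the rightmost boundary term of $\delta(f\cup g)$ against the rightmost boundary term of $\delta(f)\cup g$ and to reconcile the junction where $g$'s $A$-output is multiplied (via $\mu$) against $f$'s first input. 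The cactus-like picture in Figure \ref{Cup-product} makes the cancellation transparent, since $\delta$ acts by summing over all ways of collapsing two adjacent arcs on the circle, and collapsing arcs away from the $f$/$g$ junction visibly decomposes as $\delta(f)\cup g$ or $f\cup\delta(g)$ while the two junction collapses are absorbed by $\mu$. Once the combinatorics is set up with this partition of indices, the verification is routine term-matching; I would present it as a single displayed computation analogous to the one in Lemma \ref{lemma3.1}, pointing to Lemma \ref{lem0} at the step where $\blacktriangleleft$ must be expanded, and remark that for $p=q=0$ this recovers the classical Leibniz rule on $C^*(A,A)$.
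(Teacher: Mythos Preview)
Your proposal is correct and follows essentially the same approach as the paper: a direct term-by-term verification of the Leibniz rule, with Lemma~\ref{lem0} doing the work at the junction where the right action $\blacktriangleleft$ must be expanded. The paper organizes the computation slightly differently---it assumes without loss of generality that $m\geq q$ (so that all of $g$'s bar outputs are consumed by $f$) and computes $\delta(f\cup g)-(-1)^{m-p}f\cup\delta(g)$ first, then invokes Lemma~\ref{lem0} to identify the result with $\delta(f)\cup g$---but this is a minor variation of your partition-and-match strategy rather than a different idea.
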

\begin{proof} Without loss of  generality, we may  assume that $m\geq q$. Then we have
\begin{eqnarray*}
\lefteqn{(-1)^{\epsilon}\left(\delta(f\cup g)-(-1)^{m-p}f\cup \delta(g)\right)(s\overline{a}_{1, m+n+1})}\\
&=&\sum_{i=n+1}^{m+n} \sum_j (-1)^ic_0^j f \left(s\overline{c}^j_{1, q}\otimes s\overline{a}_{n+1, m+n-q}\right)\otimes \cdots \otimes s\overline{a_ia_{i+1}} \otimes s\overline{a}_{i+2, m+n+1}\\
&& +(-1)^{n}\sum_j \left(\mu\otimes \id_{s\overline{A}}^{\otimes p+q}\right)\left(\id_{A}\otimes f \otimes \id_{s\overline{A}}^{\otimes q}\right)\left(\left(c_0^j \otimes s\overline{c}_{1, q}^j\right)\blacktriangleleft  a_{n+1}\otimes s\overline{a}_{n+2, m+n+1}\right)\\
&& +(-1)^{m+n+1} \sum_j \left(c_0^j f\left(s\overline{c}_{1, q}^j\otimes s\overline{a}_{n+1, m-q}\right)\otimes s\overline{a}_{m-q+1, m+n}\right)\blacktriangleleft a_{m+n+1}
\end{eqnarray*}
where $\epsilon=m+n-p-q+1$ and $g(s\overline{a}_{1, n}):=\sum_j c_0^j\otimes s\overline{c}_{1, q}^j$.  Then it follows from Lemma \ref{lem0}  that the right hand side of the above identity equals to $(-1)^{\epsilon}\delta(f)\cup g(s\overline{a}_{1, m+n+1})$. Therefore, $\delta(f\cup g)=\delta(f)\cup g+(-1)^{m-p}f\cup \delta(g)$.
\end{proof}
Since $\theta_{p}(f)\cup g=f\cup \theta_{q}(g)=\theta_{p+q}(f\cup g),$ the cup product (still denoted by $\cup$) is well-defined on $C_{\sg}^*(A, A)$.
\begin{prop}\label{prop4.2}
The complex $(C_{\sg}^*(A, A), \delta)$, equipped with the cup product $\cup$, is a dg (unital associative) algebra.
\end{prop}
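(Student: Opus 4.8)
The plan is to verify the three defining properties of a dg algebra for $(C_{\sg}^*(A,A), \delta, \cup)$ directly on representatives, reducing everything to statements about the complexes $C^{*}(A,\Omega_{\nc}^p(A))$ and then passing to the colimit. First I would establish associativity of $\cup$. Given $f\in C^{m-p}(A,\Omega_{\nc}^p(A))$, $g\in C^{n-q}(A,\Omega_{\nc}^q(A))$, $h\in C^{r-t}(A,\Omega_{\nc}^t(A))$, both $(f\cup g)\cup h$ and $f\cup(g\cup h)$ land in $C^{m+n+r-p-q-t}(A,\Omega_{\nc}^{p+q+t}(A))$, and unwinding the defining formula $f\cup g=(\mu\otimes\id_{s\overline A}^{\otimes p+q})(\id_A\otimes f\otimes\id_{s\overline A}^{\otimes q})(g\otimes\id_{s\overline A}^{\otimes m})$ twice on each side, the equality follows from associativity of the multiplication map $\mu$ (equivalently, of the bimodule structure on $\Omega_{\nc}^\bullet(A)$ together with the isomorphism $\Omega_{\nc}^p(A)\otimes_A\Omega_{\nc}^q(A)\cong\Omega_{\nc}^{p+q}(A)$ recorded after Lemma \ref{lem0}); this is most transparent from the cactus-like presentation, where $\cup$ is insertion at the zero point and the two bracketings give the same labelled circle. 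Since $\theta_{p}(f)\cup g=f\cup\theta_q(g)=\theta_{p+q}(f\cup g)$ was already noted before the proposition, associativity descends to $C_{\sg}^*(A,A)$.

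Next I would treat the unit. The element $1\in A=C^0(A,A)\subset C_{\sg}^0(A,A)$, viewed in $C^0(A,A)=\Hom((s\overline A)^{\otimes 0},A)$ as the map sending the empty tensor to $1_A$, satisfies $1\cup g=g=g\cup 1$ for every $g\in C^{n-q}(A,\Omega_{\nc}^q(A))$: this is immediate from the defining formula once one notes $\mu(1_A\otimes -)=\mu(-\otimes 1_A)=\id_A$ and that tensoring with $\id_{s\overline A}^{\otimes 0}$ changes nothing. Compatibility with $\theta_p$ shows $[1]$ is a two-sided unit on the colimit, and $\delta^0(1)=0$ (a direct check, or cite that $1$ is a Hochschild $0$-cocycle). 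Finally, the Leibniz rule $\delta(f\cup g)=\delta(f)\cup g+(-1)^{m-p}f\cup\delta(g)$ at the level of each $C^{*}(A,\Omega_{\nc}^p(A))$ is exactly Lemma \ref{lemma4.1}, and since $\delta$ commutes with $\theta_p$ (Lemma \ref{lemma3.1}) and $\cup$ is compatible with $\theta_p$, the identity holds on $C_{\sg}^*(A,A)$; together with $\delta^2=0$, inherited from each $C^{*}(A,\Omega_{\nc}^p(A))$ and the colimit, this is the remaining dg-algebra axiom.

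The only genuine work is associativity, and even there the main obstacle is purely bookkeeping: keeping track of which copies of $\id_{s\overline A}$ are being inserted where when the two sides are fully expanded, i.e. checking that the placement of the $m$, $n$, $q$, $t$-fold identity strands matches up after reassociating the $\mu$'s. I expect no conceptual difficulty — the cactus-like picture makes the coincidence visually obvious, and one can if desired invoke the bimodule isomorphism $\Omega_{\nc}^p(A)\otimes_A\Omega_{\nc}^q(A)\cong\Omega_{\nc}^{p+q}(A)$ to organize the computation — so in the write-up I would state associativity, indicate it is a straightforward expansion of the definition using associativity of $\mu$ (optionally illustrated by Figure \ref{Cup-product}), and then dispatch the unit and the already-proved Leibniz rule in a line each, concluding that $(C_{\sg}^*(A,A),\delta,\cup)$ is a dg unital associative algebra.
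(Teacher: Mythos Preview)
Your proposal is correct and follows the same approach as the paper, which gives only a one-line proof: ``Since the cup product is associative, this proposition follows from Lemma \ref{lemma4.1}.'' You have simply fleshed out what the paper leaves implicit (associativity via the explicit formula and associativity of $\mu$, the unit, and passage to the colimit via compatibility with $\theta_p$), so your write-up is a more detailed version of exactly the argument the paper has in mind.
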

\begin{proof}
Since  the cup product is associative, this proposition follows from Lemma \ref{lemma4.1}.
\end{proof}

\begin{figure}
\centering
  \includegraphics[width=140mm]{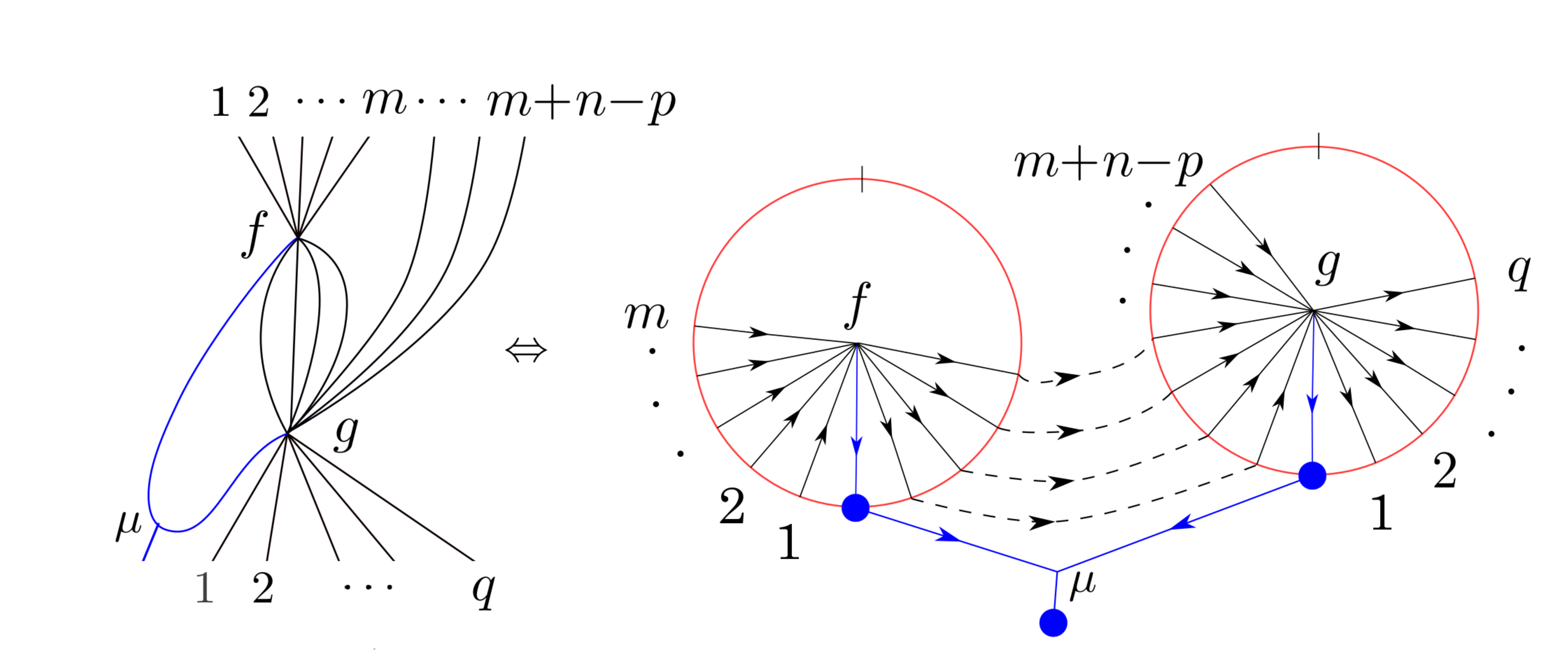}
  \caption{Cup product $g\cup f$ in $C_{\sg}^*(A, A)$ for $f\in C^{m-p}_{\sg}(A, A), g\in C^{n-q}_{\sg}(A,A)$.  For simplicity, the orientation arrows (from top to bottom) in the tree-like presentation are omitted. In the cactus-like presentation, by the blue arrows connecting blue radii, we mean the multiplication of $A$.}
  \label{Cup-product}
\end{figure}

\begin{figure}
\centering
  \includegraphics[width=140mm]{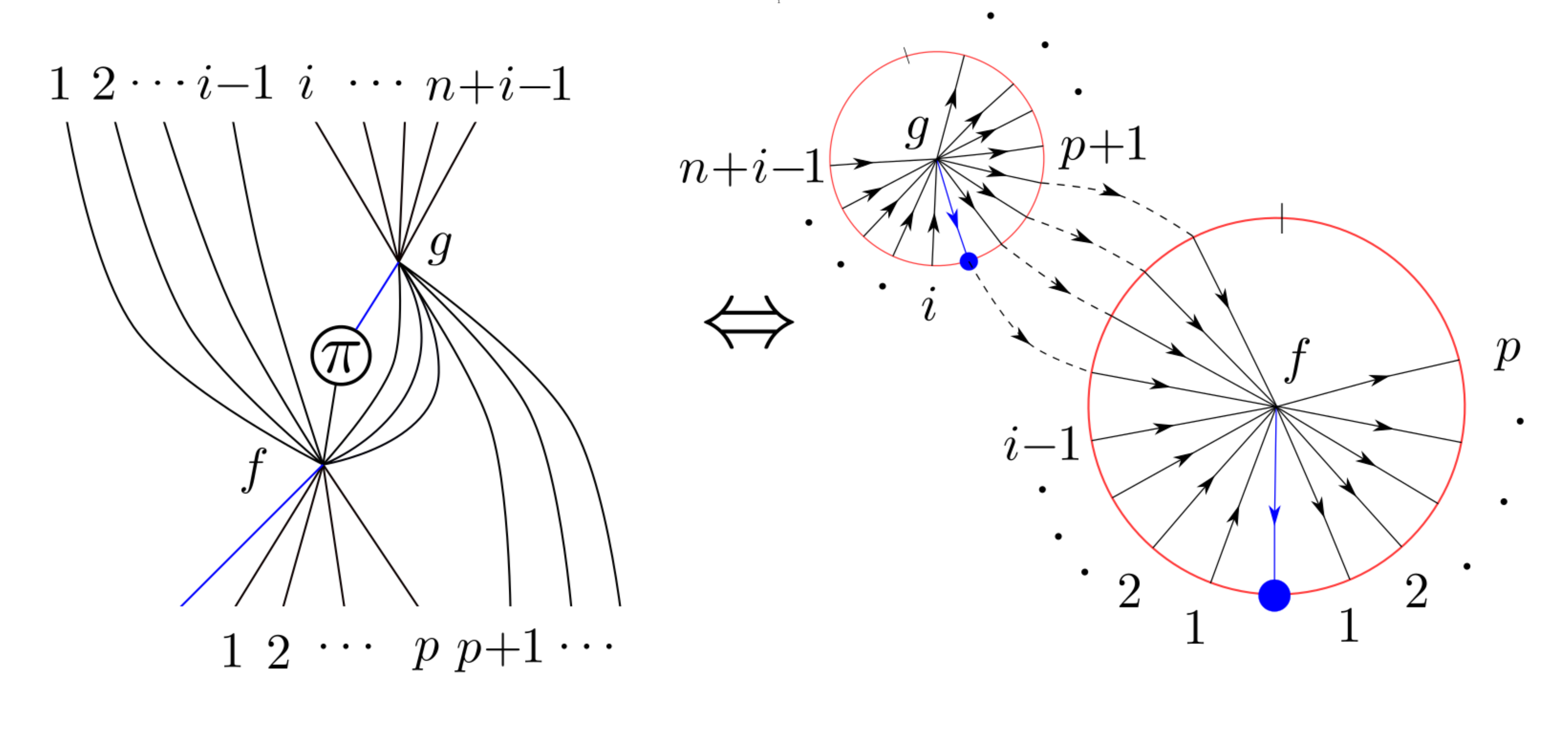}
  \caption{The treelike and cactus-like presentations of $f\circ_{i} g$ for $1\leq i\leq m$. For simplicity, the projection $\pi: A \rightarrow s\overline{A}$ is omitted in the cactus-like presentation. 
  }
    \label{Lie-bracket}
\end{figure}

\subsection{Lie bracket}
For any $m, n, p, q\in \Z_{\geq 0},$ we define  the {\it Lie bracket}
\begin{equation}\label{equation-Lie-brakcet}
[\cdot, \cdot]: C^{m-p}(A, \Omega_{\nc}^p(A))\otimes C^{n-q}(A, \Omega_{\nc}^q(A))\rightarrow C^{m+n-p-q-1}(A, \Omega_{\nc}^{p+q}(A)).
\end{equation}
as follows. For any $f\in C^{m-p}(A, \Omega_{\nc}^p(A))$ and $g\in C^{n-q}(A, \Omega^q_{\nc}(A))$, denote
\begin{equation*}
f\circ_i g:=
\begin{cases}
(f\otimes\id_{s\overline{A}}^{\otimes q})(\id_{s\overline{A}}^{\otimes i-1} \otimes \overline{g}\otimes \id_{s\overline{A}}^{\otimes m-i}) & \mbox{ for $1\leq i\leq m$}, \\
(\id_A\otimes \id_{s\overline{A}}^{\otimes -i-1} \otimes \overline{g} \otimes \id_{s\overline{A}}^{\otimes p+i}) (f\otimes \id_{s\overline{A}}^{\otimes n-1}) & \mbox{for $-p\leq i\leq -1$},
\end{cases}
\end{equation*}
where $\overline{g}:=(\pi\otimes \id_{s\overline{A}}^{\otimes q}) g$ and $\pi:A\rightarrow s\overline{A}$ is the canonical projection of degree $-1$. We set
$$f\circ g:=\sum_{i=1}^m (-1)^{(n-q-1)(i-1)} f\circ_i g-\sum_{i=1}^p (-1)^{(n-q-1)(i-m-p-1)}f\circ_{-i}g.$$ The Lie bracket is given by $[f,g]:=f\circ g-(-1)^{(m-p-1)(n-q-1)} g\circ f.$
When $p=q=0$, the Lie bracket $[\cdot, \cdot]$ coincides with the classical Gerstenhaber bracket (cf. Section \ref{section-gerstenhaber}) on $C^*(A, A)$.
Since $\theta_p(f)\circ g=f\circ \theta_q(g)=\theta_{p+q}(f\circ g)$, the circle product is well-defined on $C_{\sg}^*(A, A)$  and so is the Lie bracket $[\cdot, \cdot]$. Figures \ref{Lie-bracket} and \ref{Lie-bracket1} illustrate the graphic presentations of  the circle product. Clearly, $[\cdot, \cdot]$ is graded skew-symmetric:  $[f, g]=-(-1)^{(|f|-1)(|g|-1)}[g, f]$.  We observe that the differential $\delta$ of $C_{\sg}^*(A, A)$ can be expressed by the Lie bracket $[\cdot, \cdot]$ and the multiplication $\mu$ of $A$, namely,
\begin{equation}\label{equation-delta}
\delta(f)=[\mu, f],
\end{equation}
for any $f\in C_{\sg}^*(A, A)$.
\begin{rem}\label{remark4.5-mu}
Readers may note that the multiplication $\mu$ is not in $C^2(A, A)$. Recall that we have a natural projection $\pi: A\twoheadrightarrow \overline{A}$. We take a $k$-linear split injection $\iota: \overline{A}\hookrightarrow A$ such that $\pi \iota=\id_{\overline{A}}$. Denote  $\overline{\mu}:=\mu (\iota\otimes \iota)$ in $C^2(A, A)$. Then we have $\delta(f)=[\overline{\mu}, f]$  since $\delta(f)$ is  independent on the choice of the split injection $\iota$.  By abuse of notation, we write $\delta(f)=[\mu, f]$.
\end{rem}

\begin{figure}
\centering
  \includegraphics[width=140mm]{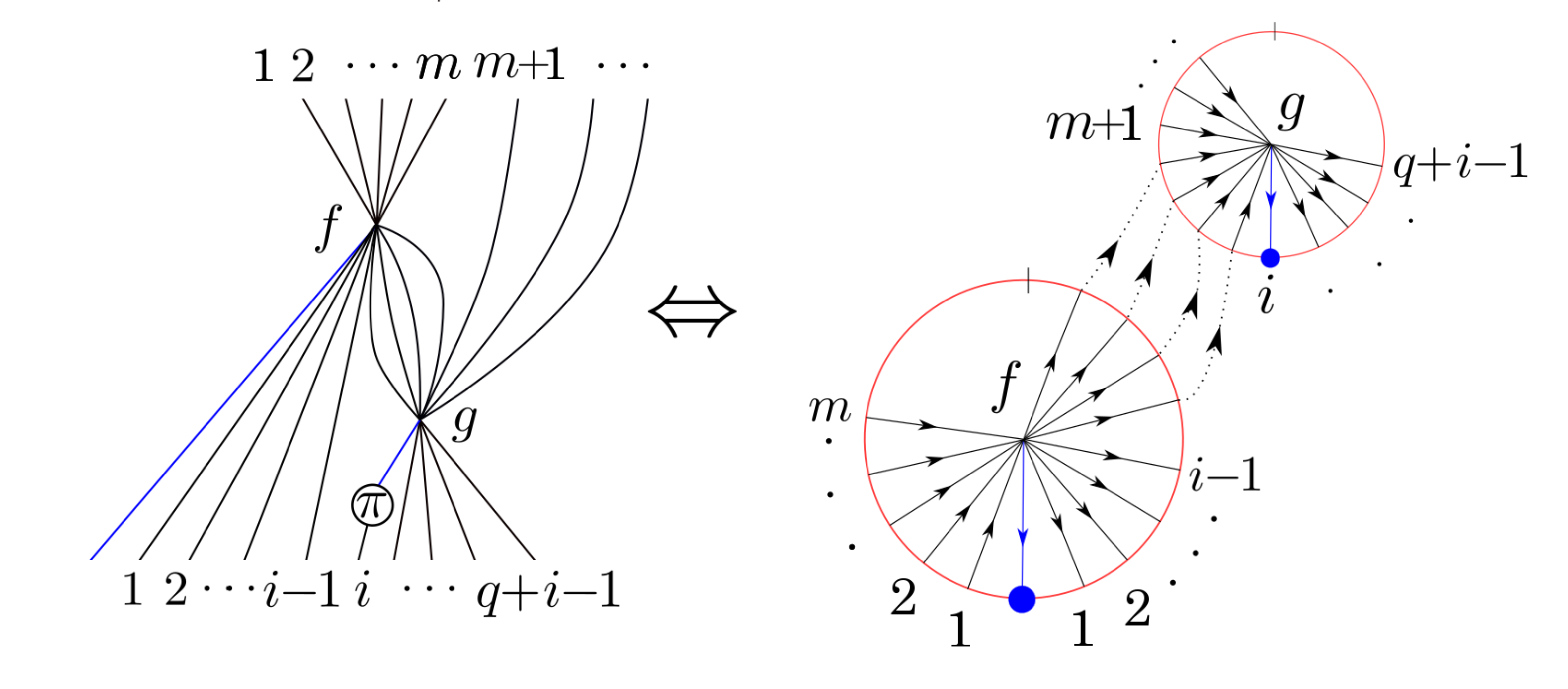}
  \caption{The treelike and cactus-like  presentations of $f\circ_{-i} g$ for $1\leq i\leq p$.  }
    \label{Lie-bracket1}
\end{figure}

\begin{prop}\label{prop4.4}
For any $f\in C^{m-p}(A, \Omega_{\nc}^p(A))$ and $g\in C^{n-q}(A, \Omega_{\nc}^q(A))$, we have
$$ (-1)^{m-p-1} f\circ \delta(g)-\delta(f\circ g)+\delta(f)\circ g=(-1)^{m-p-1}(f\cup g-(-1)^{(m-p)(n-q)} g\cup f). $$
\end{prop}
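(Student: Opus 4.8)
The plan is to prove the identity by a direct cochain-level computation, following the same strategy already used for Lemma \ref{lemma4.1}, but keeping careful track of the circle-product signs. First I would reduce to the case $p=q=0$. Indeed, since $\theta_p(f)\circ g = f\circ\theta_q(g)=\theta_{p+q}(f\circ g)$ and likewise $\theta_p(f)\cup g = f\cup\theta_q(g) = \theta_{p+q}(f\cup g)$, and since $\delta$ commutes with all the $\theta$'s by Lemma \ref{lemma3.1}, both sides of the asserted identity are compatible with the stabilization maps $\theta_p$. Hence it suffices to verify the identity for ordinary Hochschild cochains $f\in C^m(A,A)$, $g\in C^n(A,A)$, where it is the classical Gerstenhaber pre-Lie/cup compatibility relation. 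Actually, to be safe in the $\Omega_{\nc}^p$ setting I would rather argue the stabilization reduction explicitly: write $f = \theta_{p}(\tilde f)$-style representatives are not available in general, so instead I would just run the computation with the $\blacktriangleleft$-action present, using Lemma \ref{lem0} to handle the terms where the differential $\delta$ acts on the rightmost outputs, exactly as in the proof of Lemma \ref{lemma4.1}.

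The core computation is the following. Expand $\delta(f\circ g)$, $\delta(f)\circ g$, and $f\circ\delta(g)$ using the explicit formulas for $\delta$ and for $\circ_i$, $\circ_{-i}$. The terms fall into several classes: (a) terms where the $\delta$-differential multiplies two consecutive inputs that both lie outside the block fed into $g$ — these appear with opposite signs in $\delta(f\circ g)$ and in $\delta(f)\circ g$ and cancel; (b) terms where the $\delta$-multiplication happens inside the $g$-block — these cancel between $\delta(f\circ g)$ and $f\circ\delta(g)$; (c) the "boundary" terms, where the multiplication straddles the edge of the $g$-block, i.e. it multiplies the last output of $g$ with the next input of $f$, or the input just before the $g$-block with the first input of $g$. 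Precisely these boundary terms, after collecting signs, reorganize into $f\circ_m g$-type and $f\circ_0 g$-type expressions that telescope into $f(s\overline a_{1,m})\,g(s\overline a_{m+1,\dots})$ and $g(\cdots)\,f(\cdots)$, i.e. into $f\cup g$ and $g\cup f$; the two orderings come respectively from the $\circ_i$ with $i\geq 1$ and the $\circ_{-i}$ with $i\geq 1$. Keeping the sign $(-1)^{(m-p)(n-q)}$ attached to the $g\cup f$ term requires matching $(-1)^{(n-q-1)(i-1)}$ against the cup-product Koszul sign; this is exactly the bookkeeping that produces the overall prefactor $(-1)^{m-p-1}$.

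The main obstacle I expect is sign bookkeeping, not conceptual difficulty: the circle product here is a sum over both positive indices $1\le i\le m$ and negative indices $-p\le i\le -1$, with two different sign conventions, and the differential $\delta$ carries its own $(-1)^{p+1}$-type prefactor together with the $\blacktriangleleft$-action on the right that must be rewritten via Lemma \ref{lem0}. I would organize the computation by writing everything after multiplying through by $(-1)^{m+n-p-q}$ (or whatever global sign makes the pre-Lie relation cleanest), then grouping terms by which pair of slots gets merged, and checking cancellation class by class. A useful consistency check along the way: specialize to $p=q=0$ and confirm that the identity becomes the well-known relation $(-1)^{m-1}f\circ\delta(g)-\delta(f\circ g)+\delta(f)\circ g = (-1)^{m-1}(f\cup g - (-1)^{mn}g\cup f)$, which is standard in the Hochschild setting; another check is to set $g=\mu$ and recover $\delta^2 = 0$ together with \eqref{equation-delta}. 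Once the $p=q=0$ case and the compatibility of all operations with $\theta_p$ are in hand, the general case follows formally.
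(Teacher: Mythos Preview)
Your overall strategy---direct cochain computation, expanding each of $\delta(f\circ g)$, $\delta(f)\circ g$, $f\circ\delta(g)$ and cancelling terms class by class, with Lemma~\ref{lem0} handling the $\blacktriangleleft$-terms---is the same as the paper's. You were right to abandon the reduction to $p=q=0$: not every $f\in C^{m-p}(A,\Omega_{\nc}^p(A))$ lies in the image of the iterated $\theta$'s, so the stabilization argument only shows compatibility with the inductive system, not that the identity follows from the classical one.

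There is, however, one genuine piece missing from your outline. Your classification (a)/(b)/(c) implicitly treats the contribution from the positive insertions $\circ_i$ ($1\le i\le m$) and the negative insertions $\circ_{-i}$ ($1\le i\le p$) as independent, each telescoping to one of the two cup products. In fact they are not independent: after telescoping, the positive-index sum leaves behind a top term (the paper's $C_m^{>0}(f,g)$) and the negative-index sum leaves a top term $C_p^{<0}(f,g)$, and these do \emph{not} cancel on their own. The extra ingredient is the term $\delta(f)\circ_{m+1}g$, present in $\delta(f)\circ g$ because $\delta(f)$ has one more input slot than $f$; this term is exactly what bridges the two leftover boundary contributions, via the identity
\[
(-1)^{ms}\,\delta(f)\circ_{m+1} g \;=\; C_p^{<0}(f,g)-C_m^{>0}(f,g),
\]
where $s=n-q-1$. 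Without isolating this cross-term your telescoping will not close up, and you would be left with three apparently stray pieces. The paper organises the whole computation by introducing auxiliary expressions $C_i^{>0}(f,g)$, $C_i^{<0}(f,g)$ so that each $B_i^{>0}$, $B_{-i}^{<0}$ is a single finite difference; this is more efficient than grouping by ``which pair of slots gets merged'' and makes the cross-term cancellation visible. I recommend you adopt that bookkeeping rather than the (a)/(b)/(c) scheme.
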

\begin{proof}  Firstly, let us denote the left hand side of the above identity  by $B(f, g)$. Set $r:=m-p-1$ and $s:=n-q-1$. For $i>0$,  we denote  
\begin{equation*}
\begin{split}
B^{>0}_i(f, g)&:=(-1)^{(i-1)(s-1)+r}f\circ_i\delta(g)-(-1)^{(i-1)s}\delta(f\circ_i g)+(-1)^{(i-1)s}\delta(f)\circ_i g,\\
B^{<0}_{-i}(f, g)&:=(-1)^{(s-1)(i-r)+r}f\circ_{-i} \delta(g)-(-1)^{s(i-r)}\delta(f\circ_{-i} g)+(-1)^{s(i-r-1)} \delta(f)\circ_{-i} g.
\end{split}
\end{equation*}
 Here  $f\circ_ig: =0$ if it is not well-defined. Clearly, we have   $$B(f, g)=\sum_{i=1}^{m+1} B^{>0}_i(f, g)-\sum_{i=1}^{p}B_{-i}^{<0}(f, g).$$

Secondly, we deal with the first term $\sum\limits_{i=1}^{m+1} B^{>0}_i(f, g)$.
For $0\leq i\leq m$, set 
\begin{eqnarray*}
C_i^{>0}(f, g\lefteqn{):=(-1)^{i(s-1)+r-1}}\\
&&\left(\mu\otimes \id_{s\overline{A}}^{\otimes p+q }\right)\left(\id_{A}\otimes f\otimes \id_{s\overline{A}}^{\otimes q}\right) \left((1\otimes s\overline{a}_{1, i})\blacktriangleleft g(\overline{a}_{i+1, i+n})\otimes \overline{a}_{i+n+1, m+n}\right).
\end{eqnarray*}
From a straightforward computation, we get $B_{i}^{>0}(f, g)=C_{i}^{>0}(f, g)-C_{i-1}^{>0}(f, g)$  for $1\leq i\leq m$.  Since $B_{m+1}^{>0}(f, g)=(-1)^{ms}\delta(f)\circ_{m+1} g$, we have
\begin{equation}\label{equation4.1}
\begin{split}
\sum_{i=1}^{m+1} B_i^{>0}(f, g)=(-1)^{ms}\delta(f)\circ_{m+1} g +C_{m}^{>0}(f, g)-C_0^{>0}(f, g),
\end{split}
\end{equation}

Finally, we need to simplify the second term $\sum\limits_{i=1}^{p}B_{-i}^{<0}(f, g)$. For $0\leq i\leq p$, we set 
\begin{eqnarray*}
\lefteqn{C_i^{<0}(f, g):=
(-1)^{(s-1)(i-r-1)+r-1}}\\
&&\sum_j\left(c^j_0\otimes \overline{c^j}_{1, i}\right) \blacktriangleleft \left(g\otimes \id_{s\overline{A}}^{\otimes p+q-i }\right) \left( s\overline{c^j}_{i+1, p}\otimes s\overline{a}_{m+1, m+n}\right),
\end{eqnarray*}
where $f(s\overline{a}_{1, m}):=\sum_j c_0^j\otimes \overline{c^j}_{1, p}.$ For $1\leq i\leq p$, we have $B_{-i}^{<0}(f, g)=C^{<0}_{i}(f, g)-C_{i-1}^{<0}(f, g)$. Thus we get
\begin{equation}\label{equation4.2}
\sum_{i=1}^{p}B_{-i}^{<0}(f, g)=C_{p}^{<0}(f, g) -C_0^{<0}(f, g).
\end{equation}
Since
$C_0^{>0}(f,g)=(-1)^{r-1} (f\cup g)(s\overline{a}_{1, m+n})$ and $C_0^{<0}(f, g)=(-1)^{s(r-1)}(g\cup f)(s\overline{a}_{1, m+n}),$
combining (\ref{equation4.1}) and (\ref{equation4.2}), we obtain
\begin{eqnarray}\label{equation4.3}
B(f, g)\lefteqn{=(-1)^{r} (f\cup g-(-1)^{(s-1)(r-1)} g\cup f)(s\overline{a}_{1, m+n})}\nonumber\\
&&+(-1)^{ms}\delta(f)\circ_{m+1} g +C_{m}^{>0}(f, g)-C_{p}^{<0}(f, g).
\end{eqnarray}
From (\ref{equation4.3}),  it is enough to verify  $(-1)^{ms}\delta(f)\circ_{m+1} g =C_p^{<0}(f,g)-C_{m}^{>0}(f, g).$
This identity follows from a straightforward computation. 
This proves the proposition.
\end{proof}

\begin{cor}\label{cor-graded-algebra}
The cup product $\cup$ in $\HH_{\sg}^*(A, A)$ is graded commutative.
\end{cor}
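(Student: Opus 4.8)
The plan is to deduce Corollary \ref{cor-graded-algebra} directly from Proposition \ref{prop4.4}, exactly as in the classical Hochschild case. First I would observe that Proposition \ref{prop4.4} can be rewritten, for cocycles $f$ and $g$, as a statement about the commutator of the cup product being a coboundary. Indeed, take $f \in C^{m-p}(A, \Omega_{\nc}^p(A))$ and $g \in C^{n-q}(A, \Omega_{\nc}^q(A))$ with $\delta(f) = 0$ and $\delta(g) = 0$, representing classes in $\HH_{\sg}^{m-p}(A,A)$ and $\HH_{\sg}^{n-q}(A,A)$. Substituting $\delta(f) = 0$ and $\delta(g) = 0$ into the identity of Proposition \ref{prop4.4}, the left-hand side collapses to $-\delta(f \circ g)$, so we obtain
\begin{equation*}
f \cup g - (-1)^{(m-p)(n-q)} g \cup f = (-1)^{m-p} \delta(f \circ g).
\end{equation*}
Since $f \circ g \in C_{\sg}^{*}(A,A)$ is a well-defined element (as noted just before Proposition \ref{prop4.4}, $\theta_p(f) \circ g = f \circ \theta_q(g) = \theta_{p+q}(f\circ g)$, so the circle product descends to the colimit), the right-hand side is a coboundary in $C_{\sg}^*(A,A)$.

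Next I would pass to cohomology. By Proposition \ref{prop4.2}, $(C_{\sg}^*(A,A), \delta, \cup)$ is a dg algebra, so the cup product is well-defined on $\HH_{\sg}^*(A,A) = H^*(C_{\sg}^*(A,A), \delta)$; and by Lemma \ref{lemma4.1} the cup product of two cocycles is again a cocycle, so $[f] \cup [g]$ and $[g] \cup [f]$ make sense as classes. The displayed identity above shows that $f \cup g$ and $(-1)^{(m-p)(n-q)} g \cup f$ differ by a coboundary, hence represent the same class in $\HH_{\sg}^*(A,A)$. Writing $|f| = m-p$ and $|g| = n-q$ for the (internal, $\Z$-graded) degrees, this is precisely $[f] \cup [g] = (-1)^{|f||g|} [g] \cup [f]$, i.e.\ the cup product on $\HH_{\sg}^*(A,A)$ is graded commutative.

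I do not expect any real obstacle here: the only points to check are bookkeeping ones, namely that the sign exponent $(m-p)(n-q)$ appearing in Proposition \ref{prop4.4} is exactly the product of the cohomological degrees of $f$ and $g$ in $C_{\sg}^*(A,A)$ (which it is, since $f$ lives in degree $m-p$ and $g$ in degree $n-q$), and that the circle product $f \circ g$ genuinely descends to $C_{\sg}^*(A,A)$ so that the coboundary statement is meaningful there — both of which have already been established in the text preceding the corollary. The argument is therefore a one-line consequence of Proposition \ref{prop4.4} together with Lemma \ref{lemma4.1} and Proposition \ref{prop4.2}, entirely parallel to Gerstenhaber's original derivation of graded commutativity of the cup product on $\HH^*(A,A)$.
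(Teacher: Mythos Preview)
Your proposal is correct and takes essentially the same approach as the paper: the paper's proof is the single sentence ``It is a direct consequence of Proposition~\ref{prop4.4},'' and you have simply unpacked that consequence explicitly, with the correct sign bookkeeping.
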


\begin{proof}
It is a direct consequence of Proposition \ref{prop4.4}.
\end{proof}

\begin{rem}
Recall that the {\it usual} cup product (we denote it by $\cup'$ in this paper)
$$\cup': C^m(A, N)\otimes C^n(A, M)\rightarrow C^{m+n}(A, M\otimes_AN)$$ is given by
$f\cup' g(s\overline{a}_{1, m+n})=g(s\overline{a}_{1, n})\otimes_A f(\overline{a}_{n+1, m+n})$ for any two $A$-$A$-bimodules $M$ and $N$. 
In particular, we have
$$\cup': C^m(A, \Omega_{\nc}^p(A))\otimes C^n(A, \Omega_{\nc}^q(A))\rightarrow C^{m+n}(A, \Omega^{p+q}_{\nc}(A))$$
by the canonical isomorphism $\Omega^q_{\nc}(A)\otimes_A \Omega^p_{\nc}(A)\cong \Omega^{p+q}_{\nc}(A)$.
Generally, $f\cup' g$ is not equal to $f\cup g$ in $C^{m+n}(A, \Omega_{\nc}^{p+q}(A)).$ Since $\cup'$ is not compatible with the maps  $\theta_p$,  it is not well-defined in $C_{\sg}^*(A, A)$. In this sense, the cup product $\cup$ may be more interesting than the usual one $\cup'$.  \end{rem}

\begin{figure}
\centering
  \includegraphics[width=100mm]{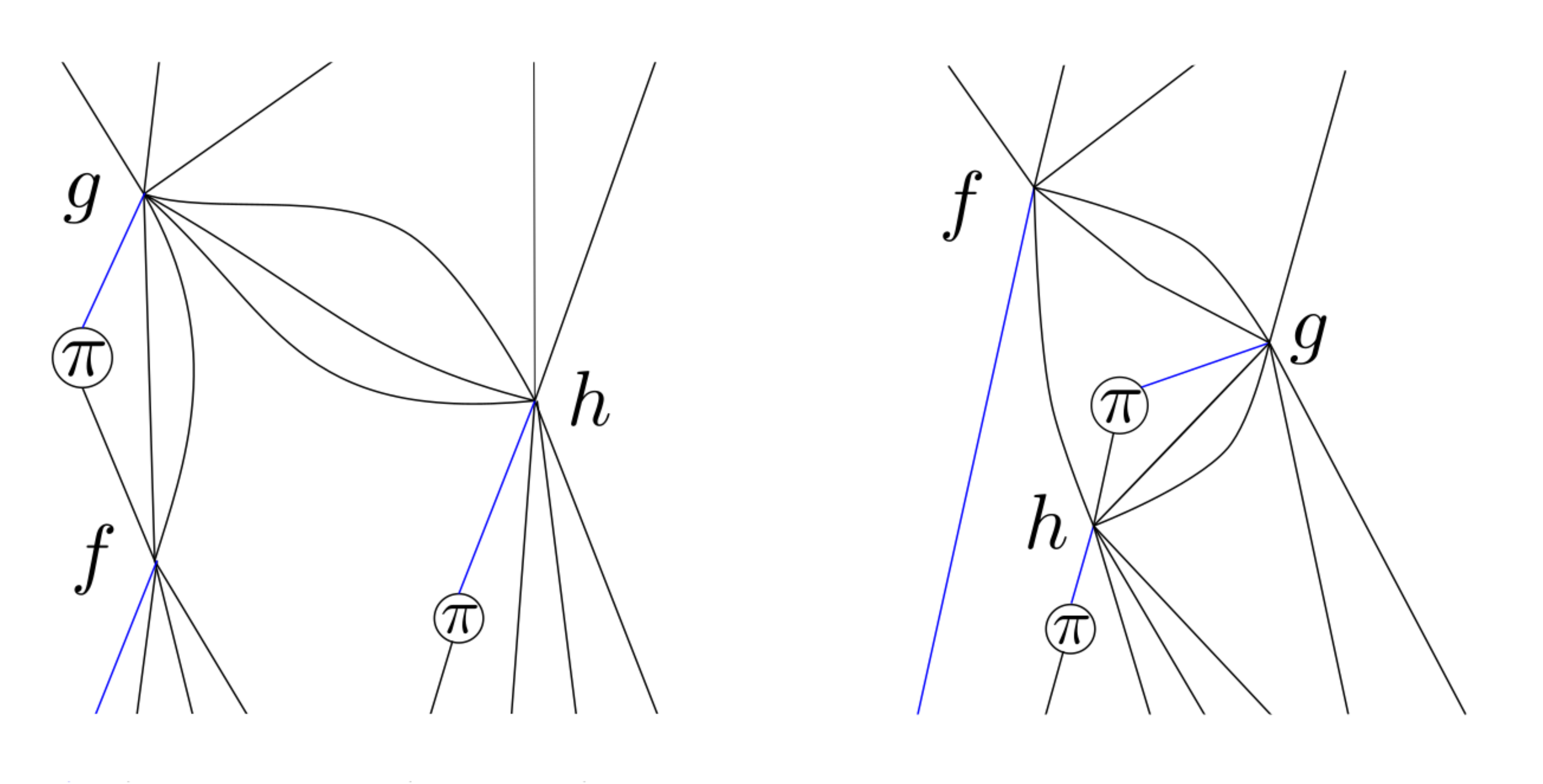}
 \caption{Two of the summands  in the brace operation $f\{g, h\}$. }
    \label{Level-tree}
\end{figure}

\begin{prop}\label{prop4.4-1}
Let $A$ be a Noetherian $k$-algebra. Then the map  $\Phi_*: \HH_{\sg}^*(A, A)\rightarrow \underline{\Ext}^*_{A\otimes A^{\op}}(A, A)$ (cf. Theorem \ref{thm1}) is an isomorphism of graded algebras, where the algebra structure on $\underline{\Ext}^*_{A\otimes A^{\op}}(A, A)$ is given by the Yoneda product. \end{prop}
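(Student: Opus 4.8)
The plan is to show that the isomorphism $\Phi_*$ constructed in Theorem~\ref{thm1} intertwines the cup product on $\HH_{\sg}^*(A,A)$ with the Yoneda product on $\underline{\Ext}^*_{A\otimes A^{\op}}(A,A)$. The starting point is the concrete description of $\Phi_*$ from the proof of Theorem~\ref{thm1}: a cocycle $f\in C^{m-p}(A,\Omega_{\nc}^p(A))$ is sent to the stable class of the $A$-$A$-bimodule morphism $\overline f\colon \Omega_{\sy}^{m}(A)\to \Omega_{\sy}^p(A)$ (after the shift identifications $\Omega_{\sy}^{m+p}(A)\cong s^{-p}(\text{degree }m)$), and this is compatible with the colimit over $\theta_p$. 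So the first step is to unwind both products on representatives living at a fixed stage $p$ (resp.\ $q$), i.e.\ to compute $\Phi_{m,p}(f)$ and $\Phi_{n,q}(g)$ as honest bimodule maps between syzygies of $A$.

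The key computation is then the following. Given $f\in C^{m-p}(A,\Omega_{\nc}^p(A))$ and $g\in C^{n-q}(A,\Omega_{\nc}^q(A))$, I want to check that the bimodule map representing $f\cup g$ equals, up to the canonical identification $\Omega_{\sy}^{p}(A)\otimes_A\Omega_{\sy}^{q}(A)\cong\Omega_{\sy}^{p+q}(A)$ and the usual sign conventions, the composite $\Omega_{\sy}^{m+n}(A)\xrightarrow{\ \widehat{\overline g}\ }\Omega_{\sy}^{?}(A)\xrightarrow{\ \overline f\ }\Omega_{\sy}^{p+q}(A)$, i.e.\ the Yoneda composite of the two classifying maps. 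Concretely, $\overline f$ is induced by $a_0\otimes s\overline a_{1,m}\mapsto f(s\overline a_{1,m})$ interpreted via $\Omega_{\sy}^p\cong\Omega_{\nc}^p$ (Lemma~\ref{lemma1}), and applying the defining formula
$$f\cup g=\bigl(\mu\otimes \id_{s\overline{A}}^{\otimes p+q}\bigr)\bigl(\id_{A}\otimes f\otimes \id_{s\overline{A}}^{\otimes q}\bigr)\bigl(g\otimes \id_{s\overline{A}}^{\otimes m}\bigr)$$
shows precisely that $\overline{f\cup g}$ factors as $\overline f\circ(\overline g\otimes_A\id)$ through the tensor-identification $\Omega_{\sy}^{m+n}(A)\cong\Omega_{\sy}^{m}(A)\otimes_A\Omega_{\sy}^{n}(A)$; the leftmost $\mu$ is exactly the multiplication that glues the two syzygy factors, and the $\id_{s\overline A}$-padding is what the embeddings $\theta_p$ already force us to quotient by. This is the same lifting-of-$\widehat f$ diagram (Diagram~\ref{equation4}) used to identify $H^m(\theta_p)$ with $\theta'_p$, so everything is compatible with passing to the colimit.

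It then remains to match this with the Yoneda product on $\underline{\Ext}^*_{A\otimes A^{\op}}(A,A)$ under the isomorphism $\underline{\Ext}^*_{A\otimes A^{\op}}(A,A)\cong\colim \underline{\Hom}_{A\otimes A^{\op}}(\Omega_{\sy}^{\bullet+m}(A),\Omega_{\sy}^{\bullet}(A))$ from (\ref{equation2}): in the stable category, the Yoneda product of two morphisms represented by maps of syzygies is literally their composition after applying the appropriate power of the loop functor $\Omega_{\sy}^1$ to one factor, which is exactly $\overline f\circ\Omega_{\sy}^{p}(\overline g)$, and this agrees with $\overline f\circ(\overline g\otimes_A\id_{\Omega_{\sy}^p})$ because $\Omega_{\sy}^p(\overline g)$ and $\overline g\otimes_A\id_{\Omega_{\sy}^p(A)}$ coincide under $\Omega_{\sy}^q(A)\otimes_A\Omega_{\sy}^p(A)\cong\Omega_{\sy}^{p+q}(A)$ (using that $A\otimes A^{\op}$-$\underline{\modu}$ is left triangulated with shift $\Omega_{\sy}^1$, and $-\otimes_A\Omega_{\sy}^p(A)$ induces $\Omega_{\sy}^p$ on the stable category). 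Finally, unitality ($\Phi_*(1)=1$) is immediate since the unit of $\HH^0_{\sg}$ is the class of $\id_A\in C^0(A,A)$, which maps to $\id_A\in\underline{\End}(A)$.

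The main obstacle I anticipate is bookkeeping of the Koszul signs and of the several shift identifications (the degree-$(-p)$ placement of $\Omega_{\nc}^p(A)$, the shift $s^{-p}$ built into (\ref{equation2}), and the sign in $\alpha_p^{-1}$ from Lemma~\ref{lemma1}): one has to check these all cancel so that the two products agree on the nose rather than up to a sign depending on $p,q,m,n$. A secondary subtlety is verifying that the identification $\Omega_{\sy}^q(A)\otimes_A\Omega_{\sy}^p(A)\cong\Omega_{\sy}^{p+q}(A)$ is the \emph{same} one implicitly used to make the iterated colimit in (\ref{equation2}) into a ring under Yoneda composition; once that compatibility is pinned down, the argument is the formal manipulation sketched above. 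Everything else reduces to the computations already carried out in the proof of Theorem~\ref{thm1} together with the explicit cup-product formula.
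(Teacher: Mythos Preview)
Your approach is correct in spirit but takes a genuinely different route from the paper. The paper does \emph{not} directly match $\cup$ with the Yoneda product. Instead it introduces the auxiliary ``usual'' cup product $\cup'$ (defined by $f\cup' g(s\overline a_{1,m+n})=g(s\overline a_{1,n})\otimes_A f(s\overline a_{n+1,m+n})$ via $\Omega_{\nc}^q\otimes_A\Omega_{\nc}^p\cong\Omega_{\nc}^{p+q}$), exhibits an explicit homotopy
\[
f\cup g-f\cup' g=\sum_{i=1}^q (-1)^{(m-p-1)(i-1)}\delta\bigl(g\circ_{-i} f-\delta(g)\circ_{-i}f-(-1)^{n-q-1} g\circ_{-i}\delta(f)\bigr),
\]
and then invokes the standard fact that $\cup'$ agrees with the Yoneda product. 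Your route instead unwinds $\overline{f\cup g}$ directly and identifies it with the composite of lifts of $\overline f,\overline g$ in the stable category. This works: if $g(s\overline a_{1,n})=\sum_j c_0^j\otimes s\overline c_{1,q}^j$, one finds $\overline{f\cup g}(a_0\otimes s\overline a_{1,m+n})=\sum_j a_0c_0^j\,f(s\overline c_{1,q}^j\otimes s\overline a_{n+1,n+m-q})\otimes s\overline a_{n+m-q+1,m+n}$, which is exactly the $q$-fold $\theta'$-lift of $\overline f\circ\widehat{\overline g}^{(m-q)}$, hence the same stable class.

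Two caveats on your write-up. First, your factorization ``$\overline f\circ(\overline g\otimes_A\id)$ through $\Omega_{\sy}^{m+n}\cong\Omega_{\sy}^{m}\otimes_A\Omega_{\sy}^{n}$'' is not literally correct as stated: the outputs $s\overline c_{1,q}^j$ of $g$ are fed back in as the \emph{first $q$ inputs} of $f$, so the honest factorization is $(\overline f\otimes_A\id_{\Omega^q})\circ(\overline g\otimes_A\id_{\Omega^m})$ viewed through $\Omega^{m+n}\cong\Omega^n\otimes_A\Omega^{m-q}\otimes_A\Omega^q$, with $\overline f$ acting on the $\Omega^q\otimes_A\Omega^{m-q}\cong\Omega^m$ block. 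Second, the index in your ``$\Omega_{\sy}^p(\overline g)$'' is off: the syzygy shift needed to compose with $\overline f$ is $m-q$, not $p$. These are exactly the bookkeeping issues you anticipated; once fixed, your argument goes through and is arguably more conceptual than the paper's explicit homotopy, though the latter has the side benefit of recording the chain-level comparison $\cup\sim\cup'$.
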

\begin{proof}
For any $f\in C^{m-p}(A, \Omega^p(A))$ and $g\in C^{n-q}(A, \Omega^q(A))$, by a similar computation as in the proof of Proposition \ref{prop4.4},  we have 
$$f\cup g-f\cup' g=\sum_{i=1}^q (-1)^{(m-p-1)(i-1)}\delta(g\circ_{-i} f-\delta(g)\circ_{-i}f-(-1)^{n-q-1} g\circ_{-i}\delta(f)).$$ Thus  $\cup$ coincides with $\cup'$ at the cohomology level. Therefore, this proposition follows from the  fact that the usual cup product $\cup'$ corresponds to the Yoneda product.\end{proof}

\begin{prop}\label{prop-lie-algebra}
$(C_{\sg}^*(A, A), \delta, [\cdot, \cdot])$ is a dg Lie algebra of degree $-1$.
\end{prop}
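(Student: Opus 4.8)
The plan is to verify the three defining conditions of a differential graded Lie algebra of degree $-1$: graded skew-symmetry of $[\cdot,\cdot]$, compatibility of the differential $\delta$ with $[\cdot,\cdot]$ (the graded Leibniz rule), and the graded Jacobi identity. Graded skew-symmetry has already been noted right after the definition of the bracket (the identity $[f,g]=-(-1)^{(|f|-1)(|g|-1)}[g,f]$ is immediate from the definition of $f\circ g$ and the sign conventions), so the real content is the Leibniz rule and the Jacobi identity. Throughout, I would exploit the key observation $\delta(f)=[\mu,f]$ (Remark \ref{remark4.5-mu}): once the pre-Lie-type identities for the $\circ_i$ operations are in place, both the Leibniz rule and the Jacobi identity for $\delta$ become formal consequences of the bracket axioms applied to the distinguished element $\mu$ together with the fact that $[\mu,\mu]=0$ (the latter encoding $\delta^2=0$, which in turn reflects associativity of $A$).

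First I would establish that the circle product $\circ$ makes $C_{\sg}^{*}(A,A)$ into a (graded) pre-Lie algebra up to the usual higher correction terms — more precisely, I would prove the graded right-symmetry identity
$$
(f\circ g)\circ h-f\circ(g\circ h)=(-1)^{(|g|-1)(|h|-1)}\bigl((f\circ h)\circ g-f\circ(h\circ g)\bigr),
$$
which is the standard identity whose antisymmetrization yields the Jacobi identity for $[\cdot,\cdot]$. This is proved, as in the classical Hochschild case, by writing out both sides in terms of the elementary insertion operations $f\circ_i g$ for $i$ ranging over \emph{both} the positive slots $1\le i\le m$ (insertion into an ordinary input) and the negative slots $-p\le i\le -1$ (insertion into one of the $\Omega_{\nc}^p(A)$-outputs), and checking that the ``non-nested'' terms on the left-hand side are symmetric in $g$ and $h$ while the ``nested'' terms reassemble into $f\circ(g\circ h)$ and $f\circ(h\circ g)$. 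The cactus-like graphical presentation of Process \ref{proc3.4} and Figures \ref{Lie-bracket}--\ref{Lie-bracket1} make the bookkeeping transparent: inserting $g$ and then $h$ into disjoint radii of the cactus is order-independent up to the Koszul sign, and the edge cases where an insertion lands on the blue (output-$A$) radius or straddles the zero point are handled by Lemma \ref{lem0}, exactly as Lemma \ref{lem0} was used in the proofs of Lemmas \ref{lemma3.1} and \ref{lemma4.1} and of Proposition \ref{prop4.4}. Since all the operations are compatible with the maps $\theta_p$ (this was checked slot-by-slot when the bracket was defined), the identity descends to the colimit $C_{\sg}^{*}(A,A)$.

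Next, the compatibility of $\delta$ with the bracket, namely $\delta[f,g]=[\delta f,g]+(-1)^{|f|-1}[f,\delta g]$, follows by specializing the pre-Lie identity to $h=\mu$ (equivalently $h=\overline{\mu}\in C^2(A,A)$ in the sense of Remark \ref{remark4.5-mu}) and antisymmetrizing, using $\delta(-)=[\mu,-]$; alternatively one can derive it directly from Proposition \ref{prop4.4} by combining the identity there for the pair $(f,g)$ with the analogous identity for $(g,f)$ and cancelling the cup-product terms against each other via Corollary \ref{cor-graded-algebra}'s underlying chain-level identity. Either route is a sign-chase, not a new idea. Finally, the graded Jacobi identity
$$
(-1)^{(|f|-1)(|h|-1)}[[f,g],h]+(-1)^{(|g|-1)(|f|-1)}[[g,h],f]+(-1)^{(|h|-1)(|g|-1)}[[h,f],g]=0
$$
is the formal antisymmetrization of the graded right-symmetry identity for $\circ$, requiring only the Koszul sign rule and no further computation with the explicit formulas.

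The main obstacle I anticipate is purely combinatorial rather than conceptual: getting the signs right in the pre-Lie identity when insertions occur at the negative slots and when a nested insertion crosses the zero point of the circle, since there the right action $\blacktriangleleft$ intervenes and the naive ``insert into a planar tree'' picture is modified by Lemma \ref{lem0}. Organizing the proof around the cactus presentation — where a slot is just a marked radius and an insertion is gluing a smaller cactus onto it — minimizes this, but one still must verify carefully that the two families of insertion operations (positive and negative slots) interact correctly, and that the boundary terms produced by $\delta$ acting through $\blacktriangleleft a_{n+1}$ (the term involving Lemma \ref{lem0} in Lemma \ref{lemma3.1}) telescope as claimed. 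As in the classical case and as in Proposition \ref{prop4.4}, this is ultimately a bounded, if lengthy, verification, so I would state the pre-Lie identity explicitly and relegate its term-by-term check to a computation, then deduce the dg Lie axioms formally.
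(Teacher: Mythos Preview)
Your proposal is correct and follows essentially the same strategy as the paper: derive the compatibility of $\delta$ with $[\cdot,\cdot]$ from Proposition~\ref{prop4.4} (your second route---combining the identity for $(f,g)$ with that for $(g,f)$ so the cup-product terms cancel---is exactly what the paper means by ``it follows from Proposition~\ref{prop4.4}''), and reduce the Jacobi identity to the graded pre-Lie identity for $\circ$.

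The only organizational difference is how the pre-Lie identity itself is established. You propose a direct, self-contained verification by sorting the insertion terms $(f\circ_i g)\circ_j h$ into nested and non-nested families and checking that the non-nested ones are symmetric in $(g,h)$ up to the Koszul sign. The paper instead forward-references Theorem~\ref{thm5.2} and identity~(\ref{equation-B1}): it introduces the brace operations $f\{g,h\}$ on $C_{\sg}^*(A,A)$ in Section~\ref{subsection-brace} and records the associator as $(f\circ g)\circ h - f\circ(g\circ h)=(-1)^{(|g|-1)(|h|-1)}f\{g,h\}+f\{h,g\}$, from which the pre-Lie identity is immediate. The underlying computation is identical---your ``non-nested terms'' \emph{are} the brace $f\{g,h\}$---so the paper's route simply avoids duplicating a calculation it needs anyway for the $B_\infty$-structure, at the cost of a forward reference; your route is self-contained but would repeat that work.
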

\begin{proof}
It follows from Proposition \ref{prop4.4}  that $[\cdot, \cdot]$ is compatible with the differential $\delta$. Namely,  we have $\delta([f, g])=(-1)^{|f|-1}[f, \delta(g)]+[\delta(f), g]$  for any $f, g\in C_{\sg}^*(A, A)$.  It is sufficient to verify the Jacobi identity,
$$(-1)^{(|f|-1)(|h|-1)}[f, [g, h]]+(-1)^{(|f|-1)(|g|-1)}[g, [h, f]]+(-1)^{(|g|-1)(|h|-1)}[h, [f, g]]=0,$$
 where we recall that  $|f|$ is the degree of $f$. Note that to verify the Jacobi identity is equivalent to verify the so-called {\it pre-Lie identity} (cf. \cite{Ger}),
\begin{equation*}
f\circ(g\circ h)-(f\circ g)\circ h =(-1)^{(|g|-1)(|h|-1)}(f\circ (h\circ g)-(f\circ h)\circ g).
\end{equation*}
From Theorem \ref{thm5.2} and the identity (\ref{equation-B1}) in the following, we have   $$(f\circ g)\circ h-f\circ (g\circ h)=(-1)^{(|g|-1)(|h|-1)} f\{g, h\}+f\{h, g\},$$ where $f\{g, h\}$ is the brace operation on $C_{\sg}^*(A, A)$. Roughly speaking, the summands of   $f\{g, h\}$ consist of tree-like graphs with three vertices $f, g$ and $h$ such that the special output (i.e.  the blue output)  is given  by $f$, and the level of $g$ is higher than the level of $h$ (cf. Figure \ref{Level-tree}).
This yields the pre-Lie identity.
\end{proof}

\section{$B_{\infty}$-algebra and Deligne's conjecture on $C_{\sg}^*(A, A)$}
Throughout this section, we fix an associative algebra $A$ over a field $k$. The aim of this section is to  prove the following  two results.
\begin{thm}\label{thm5.2}
There is a $B_{\infty}$-algebra structure on  $C_{\sg}^*(A, A)$ such that the normalized Hochschild cochain complex $C^*(A, A)$ is a $B_{\infty}$-subalgebra. \end{thm}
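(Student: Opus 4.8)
The plan is to realize the $B_{\infty}$-structure in the standard ``brace plus cup product'' form of a homotopy Gerstenhaber algebra, following Voronov \cite{Vor}. Recall that a $B_{\infty}$-algebra structure on a graded $k$-module $V$ is the same as a dg bialgebra structure on the tensor coalgebra $T^{c}(sV)=\bigoplus_{n\geq 0}(sV)^{\otimes n}$ with its deconcatenation coproduct: a coderivation differential, encoded by structure maps $m_{n}:(sV)^{\otimes n}\to sV$ obeying the $A_{\infty}$-relations, together with an associative, unital, comultiplicative product, encoded by maps $m_{p,q}:(sV)^{\otimes p}\otimes(sV)^{\otimes q}\to sV$, such that the differential is a derivation of the product. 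For $V=C_{\sg}^{*}(A,A)$ I will take $m_{1}:=\delta$ and $m_{2}:=\cup$ (the cup product of Section \ref{section4}) as the only nonzero $A_{\infty}$-operations, and I will take $m_{1,n}(f;g_{1},\dots,g_{n})$ to be a suitably normalized version of a brace operation $f\{g_{1},\dots,g_{n}\}$, $n\geq 1$, constructed below, with all other $m_{p,q}$ (those with $p+q\geq 2$, $p\neq 1$) vanishing. With these choices the bialgebra axioms collapse to three statements: the $A_{\infty}$-relations for $(\delta,\cup)$, which are just $\delta^{2}=0$, the Leibniz rule of Lemma \ref{lemma4.1} and the associativity of $\cup$ (Proposition \ref{prop4.2}); compatibility of $\delta$ with the braces; and the defining identities of a homotopy Gerstenhaber algebra (the brace relations) in the sense of \cite{Vor}. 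Hence the whole content of the theorem is to construct the braces on $C_{\sg}^{*}(A,A)$ and to verify these last two families of identities.

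The braces are built exactly in the spirit of the circle product of Section \ref{section4}. Given $f\in C^{m-p}(A,\Omega_{\nc}^{p}(A))$ and $g_{j}\in C^{n_{j}-q_{j}}(A,\Omega_{\nc}^{q_{j}}(A))$ for $1\leq j\leq n$, write $\overline{g}_{j}:=(\pi\otimes\id_{s\overline{A}}^{\otimes q_{j}})g_{j}$ and define
\[
f\{g_{1},\dots,g_{n}\}\in C^{m-p+\sum_{i}(n_{i}-q_{i})}\bigl(A,\Omega_{\nc}^{p+\sum_{i}q_{i}}(A)\bigr)
\]
as the signed sum, over all order-preserving ways of inserting $\overline{g}_{1},\dots,\overline{g}_{n}$ into pairwise distinct slots of $f$ --- either into one of its $m$ inputs of $s\overline{A}$ (a ``positive'' insertion, of the type $f\circ_{i}g$) or, through the right module action $\blacktriangleleft$ on $\Omega_{\nc}^{\bullet}(A)$, into one of its $p$ output slots (a ``negative'' insertion, of the type $f\circ_{-i}g$) --- with the Koszul sign obtained by commuting the $\overline{g}_{j}$ past the intervening $s\overline{A}$-factors, keeping track of the shifts recorded by the $\Omega_{\nc}^{\bullet}(A)$-degrees. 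For $n=1$ this is precisely $f\circ g$; for $p=q_{1}=\dots=q_{n}=0$ there are no output slots and one recovers the classical brace operations of Kadeishvili \cite{Kad} and Getzler \cite{Get} on $C^{*}(A,A)$. The combinatorics is clearest in the cactus-like presentation of Process \ref{proc3.4}: the inputs of the $g_{j}$ occupy the left semicircle in the cyclic order dictated by a planar level tree (Figure \ref{Level-tree}), and their outputs the right semicircle. Just as for $\cup$ and $[\cdot,\cdot]$, a direct check using Lemma \ref{lem0} gives
\[
\theta_{p}(f)\{g_{1},\dots,g_{n}\}=f\{g_{1},\dots,\theta_{q_{j}}(g_{j}),\dots,g_{n}\}=\theta_{p+\sum_{i}q_{i}}\bigl(f\{g_{1},\dots,g_{n}\}\bigr)
\]
for every $j$, so the braces pass to well-defined operations on the colimit $C_{\sg}^{*}(A,A)$; and on the subcomplex $C^{*}(A,A)=C^{*}(A,\Omega_{\nc}^{0}(A))$ they restrict to the classical braces.

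It then remains to verify the two families of identities. By (\ref{equation-delta}) we have $\delta=[\mu,-]$, so the compatibility of $\delta$ with the braces is the special case of a brace relation in which the lowest vertex is $\mu$; thus essentially one family of identities is left. Each is checked by the same mechanism as in the classical case \cite{Vor,Get}: both sides expand into signed sums of multi-insertion cochains indexed by planar level trees, and the terms cancel in pairs according to whether two given vertices are comparable in the tree or not. The one genuinely new feature is that an insertion may land in an output slot, producing a term involving an iterated right action $((\,\cdot\,)\blacktriangleleft a)$; the identity of Lemma \ref{lem0} is exactly what is needed to re-expand such a term into the tensor-factor form in which it cancels against its partner. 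I expect the main obstacle to lie precisely in this bookkeeping: organizing planar level trees that now carry two kinds of edges (into inputs versus into outputs) and checking that the Koszul signs --- which pick up extra contributions from the shifts by $p$ and by the $q_{j}$ in the target $\Omega_{\nc}^{\bullet}(A)$ --- are globally consistent, so that every discrepancy produced by the $\blacktriangleleft$-terms really does cancel. Once the sign conventions are fixed, each individual cancellation is a computation of the kind already carried out in Lemmas \ref{lemma3.1} and \ref{lemma4.1} and in Proposition \ref{prop4.4}. Finally, since every structure operation preserves the subspace of cochains with $p=0$ and restricts there to the classical $\delta$, $\cup$ and braces, $C^{*}(A,A)$ is a $B_{\infty}$-subalgebra of $C_{\sg}^{*}(A,A)$ and $\theta_{0}$ is a strict morphism of $B_{\infty}$-algebras; this also isolates the brace operations that will be used for the Deligne conjecture in Theorem \ref{thm5.1}, completing the plan.
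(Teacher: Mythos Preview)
Your plan is essentially the paper's own approach: take $(\delta,\cup^{\op})$ as the only nonzero $A_{\infty}$-operations, define multi-brace operations by allowing insertions into both the $m$ input slots and the $p$ output slots of $f$, check compatibility with the colimit maps $\theta_{p}$ (the paper's Lemma \ref{lem5.9}), and then reduce via Remark \ref{remark-B} and the identities $\delta=[\mu,-]$, $\cup^{\op}=\mu\{-,-\}$ to verifying the higher pre-Jacobi identity (\ref{equation-B1}) and the distributivity (\ref{equation-B2}), which the paper does ``by graphic presentations.''

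One point to correct: the negative insertions $f\circ_{-i}g$ are \emph{not} defined through the right action $\blacktriangleleft$ on $\Omega_{\nc}^{\bullet}(A)$; they are honest compositions $(\id_{A}\otimes\id_{s\overline{A}}^{\otimes i-1}\otimes\overline{g}\otimes\id_{s\overline{A}}^{\otimes p-i})\circ(f\otimes\id_{s\overline{A}}^{\otimes n-1})$, i.e.\ one first applies $f$ and then feeds one of its output $s\overline{A}$-factors (together with fresh inputs on the right) into $g$. In the general brace $f\{g_{1},\dots,g_{k}\}$ several $g_{r}$'s may land in the \emph{same} output arc (the paper's indices satisfy $l_{1}\leq\cdots\leq l_{k-j}$, not strict inequality), and the precise wiring is governed by the iterative input--output matching of Process \ref{proc5.5}; this is also what makes the $\theta_{p}$-compatibility hold (the cancellation in Figure \ref{Indpendent}), not Lemma \ref{lem0} directly. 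Once you adjust the definition accordingly, your outline coincides with the paper's proof.
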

\begin{thm}\label{thm5.1},
The complex  $C_{\sg}^*(A, A)$ is an algebra over the operad of chains  of the little $2$-disc operad. Equivalently, the Deligne's conjecture  holds for $C^*_{\sg}(A, A)$.
\end{thm}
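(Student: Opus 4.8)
The plan is to deduce Theorem \ref{thm5.1} from Theorem \ref{thm5.2} together with Kaufmann's identification of the brace operad with the cellular chains of the spineless cacti operad. First I would recall that, by \cite[Theorem 3.1]{Vor} and its analogue for $C_{\sg}^*(A,A)$, a $B_\infty$-algebra structure on a complex $V$ is precisely a dg bialgebra structure on the cofree coalgebra $T(sV)$ extending the shuffle coproduct, and that the defining data are exactly the brace operations $m_{1,n}$ (producing the $\circ_i$-type operations of Section \ref{section4}) together with the associative products $m_{k,1}$; in our situation $m_{1,1}=\delta$, $m_{2,0}=\cup$ and $m_{1,n}=$ the higher braces, while all other $m_{k,l}$ with $k\geq 2$ vanish. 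Thus the $B_\infty$-structure of Theorem \ref{thm5.2} is a \emph{minimal operation} $B_\infty$-structure, generated as a dg operad by $\cup$ and the braces. I would then let $\Brace_{\sg}$ denote the dg suboperad of the endomorphism operad $\Endop(C_{\sg}^*(A,A))$ generated by these operations, exactly parallel to the operad $\Brace\subset\Endop(C^*(A,A))$ recalled in the introduction.

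The key step is to observe that the braces and cup product on $C_{\sg}^*(A,A)$ satisfy exactly the same relations as those on $C^*(A,A)$: this is essentially forced, because every representative computation takes place in some finite stage $C^*(A,\Omega_{\nc}^p(A))$ and the operations are built from the same combinatorial pattern of insertions of $\overline g$ into slots of $f$, the only new feature being the finitely many extra outputs carried along by $\id_{s\overline A}^{\otimes p}$ (as in Figures \ref{Lie-bracket} and \ref{Lie-bracket1} and Procedure \ref{proc3.4}). In particular the higher-brace / pre-Lie / distributivity identities that hold in $C^*(A,A)$ are verified by the same manipulations in $C_{\sg}^*(A,A)$ — this is precisely the content of Theorem \ref{thm5.2} — so the structure map $\Brace\to\Endop(C^*(A,A))$ lifts to a map of dg operads $\Brace\to\Brace_{\sg}\hookrightarrow\Endop(C_{\sg}^*(A,A))$, and in fact $\Brace_{\sg}$ is a quotient of the free operad on the same generators subject to the same relations, hence there is a surjection $\Brace\twoheadrightarrow\Brace_{\sg}$; since $\Brace$ has the "universal" presentation one in fact gets an isomorphism $\Brace\cong\Brace_{\sg}$ of dg operads (the point being that no extra relations are created in passing to $C_{\sg}^*$, because $C^*(A,A)$ sits inside $C_{\sg}^*(A,A)$ as a $B_\infty$-subalgebra and already witnesses the freeness). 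Invoking \cite[Proposition 4.9]{Kau07a}, which identifies $\Brace$ with the cellular chain operad $CC_*(\Cact)$ of spineless cacti, we conclude that $C_{\sg}^*(A,A)$ is an algebra over $CC_*(\Cact)$.

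Finally I would apply \cite[Theorem 3.11]{Kau07a}, which states that $CC_*(\Cact)$ is weakly equivalent (as a dg operad) to the operad $C_*(D_2)$ of singular chains on the little $2$-discs operad $D_2$. Combining this equivalence with the $CC_*(\Cact)$-action just constructed gives the desired $C_*(D_2)$-action on $C_{\sg}^*(A,A)$ up to weak equivalence — equivalently, $C_{\sg}^*(A,A)$ is an algebra over the operad of chains of the little $2$-discs operad, which is the statement of Deligne's conjecture for this complex. As a byproduct, passing to homology and using Cohen's theorem \cite{Coh} recovers Corollary \ref{cor5.3}: $\HH_{\sg}^*(A,A)$ is a Gerstenhaber algebra, and the $B_\infty$-subalgebra inclusion $C^*(A,A)\hookrightarrow C_{\sg}^*(A,A)$ makes $\rho$ a morphism of Gerstenhaber algebras.

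I expect the main obstacle to be the bookkeeping needed to make the identification $\Brace\cong\Brace_{\sg}$ precise, rather than merely a surjection of operads: one must check that the combinatorial generators of $CC_*(\Cact)$ (the cells of the cacti operad, i.e. the arity-$n$ trees-with-levels decorating a cactus) act on $C_{\sg}^*(A,A)$ by operations that are \emph{linearly independent enough} that the action map is injective on each arity, so that no degeneration of cells occurs. The cactus-like presentation of Procedure \ref{proc3.4} is designed exactly for this: a cactus cell prescribes where the marked (blue) output sits and in which cyclic order the inputs and the auxiliary $s\overline A$-outputs are read off, and on $C_{\sg}^*$ this is realized faithfully because one can always work in a large enough stage $\Omega_{\nc}^p(A)$ with $p$ auxiliary strands. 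Once faithfulness is in hand, the transfer of relations is automatic and the theorem follows formally from Kaufmann's results.
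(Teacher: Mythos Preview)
Your overall strategy is correct and is essentially the same as the paper's: reduce Theorem~\ref{thm5.1} to the $B_\infty$-structure of Theorem~\ref{thm5.2} via Kaufmann's identification $\Brace\cong CC_*(\Cact)$ and the equivalence of $CC_*(\Cact)$ with chains on the little $2$-discs operad. The paper phrases this slightly differently---it first writes down an explicit action of an arbitrary cell of $CC_*(\Cact)$ on $C_{\sg}^*(A,A)$ (Definition~\ref{defin-5.9}), checks well-definedness with respect to representatives (Lemma~\ref{lem5.9}), and then observes that compatibility of this action with the operadic differential reduces exactly to the $B_\infty$-relation (\ref{equation-B3})---but the logical content is the same.

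The one genuine misstep in your proposal is the last paragraph. You do \emph{not} need $\Brace\cong\Brace_{\sg}$, and in particular you do not need the action $CC_*(\Cact)\to\Endop(C_{\sg}^*(A,A))$ to be injective. Being an algebra over a dg operad $\mathcal{O}$ means having a morphism of dg operads $\mathcal{O}\to\Endop(V)$; faithfulness is irrelevant. Since $CC_*(\Cact)$ is generated (as a dg operad) by the cells corresponding to $\cup$ and the braces, and the relations among these generators are exactly the $B_\infty$-relations (\ref{equation-B1})--(\ref{equation-B3}), the existence of such a morphism is \emph{equivalent} to Theorem~\ref{thm5.2}. So the ``main obstacle'' you anticipate---checking linear independence of the operations to get injectivity on each arity---is a phantom; you can simply delete that discussion and the argument is complete.
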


Combining  Propositions \ref{prop4.2} and \ref{prop-lie-algebra}, and Theorem \ref{thm5.2}, we obtain the following result.
\begin{cor}\label{cor5.3}
Let $A$ be a $k$-algebra. Then the Tate-Hochschild cohomology $\HH_{\sg}^*(A, A)$, equipped with the cup product $\cup$ and  Lie bracket $[\cdot, \cdot]$, is a Gerstenhaber algebra. Moreover, the natural map $\rho: \HH^*(A, A)\rightarrow \HH^*_{\sg}(A, A)$ (cf. Remark \ref{rem3.3}) is a morphism of Gerstenhaber algebras.
\end{cor}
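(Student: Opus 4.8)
\textbf{Proof plan for Corollary \ref{cor5.3}.}
The plan is to assemble the Gerstenhaber algebra axioms from the structural results established earlier, then verify that $\rho$ is a morphism. First I would invoke Proposition \ref{prop4.2} together with Corollary \ref{cor-graded-algebra}: the former gives that $(C_{\sg}^*(A, A), \delta, \cup)$ is a dg associative algebra with unit $1\in \HH^0(A,A)$ (the class of the Hochschild $0$-cochain sending the empty tensor to $1\in A$, which survives into $C_{\sg}^*(A,A)$), and the latter upgrades the induced product on cohomology to a graded commutative one. This establishes axiom (i) of the Gerstenhaber structure (cf. Theorem \ref{thm-ger}, Remark \ref{rem2.2}). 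Next I would invoke Proposition \ref{prop-lie-algebra}: since $(C_{\sg}^*(A, A), \delta, [\cdot,\cdot])$ is a dg Lie algebra of degree $-1$ and $[\cdot,\cdot]$ is graded skew-symmetric and satisfies the (graded) Jacobi identity, passing to cohomology yields axiom (ii), a graded Lie algebra structure on $\HH^{*+1}_{\sg}(A, A)$.

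For axiom (iii), the Leibniz compatibility between $\cup$ and $[\cdot,\cdot]$, I would appeal to Theorem \ref{thm5.2}: the $B_{\infty}$-algebra structure on $C_{\sg}^*(A, A)$ extends the classical $B_{\infty}$-structure on $C^*(A, A)$, and its underlying $A_{\infty}$-part is the (strict) associative cup product while its brace part $f\{g_1, \ldots, g_n\}$ controls the circle product via the identity $(f\circ g)\circ h - f\circ(g\circ h) = (-1)^{(|g|-1)(|h|-1)} f\{g, h\} + f\{h, g\}$ referenced in the proof of Proposition \ref{prop-lie-algebra}. From the $B_{\infty}$ compatibility relations — equivalently, from a direct cochain-level computation of $\delta(f\{g\}) \pm f\{\delta g\} \pm \delta f\{g\}$ expressing the failure of $[\,f,\,-\,]$ to be a derivation of $\cup$ as a $\delta$-coboundary — the graded Leibniz rule $[f, g\cup h] = [f,g]\cup h + (-1)^{(m-1)n} g \cup [f,h]$ holds in $\HH_{\sg}^*(A, A)$. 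Concretely I would mimic the classical argument of Gerstenhaber (as in \cite{Ger}): the brace $f\{g, h\}$ interpolates between $f\circ(g\cup h)$ and $(f\circ g)\cup h + g\cup(f\circ h)$ up to a $\delta$-exact term, so the obstruction to Leibniz vanishes on cohomology. This completes the verification that $(\HH_{\sg}^*(A,A), \cup, [\cdot,\cdot])$ is a Gerstenhaber algebra.

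Finally, for the statement that $\rho\colon \HH^*(A,A)\to \HH_{\sg}^*(A,A)$ is a morphism of Gerstenhaber algebras: by Remark \ref{rem3.3}, $\rho$ is induced by the inclusion of cochain complexes $C^*(A,A) = C^*(A, \Omega_{\nc}^0(A)) \hookrightarrow C_{\sg}^*(A,A)$, and by the remarks following \eqref{equation-cup-product} and \eqref{equation-Lie-brakcet} this inclusion takes the classical cup product and Gerstenhaber bracket on $C^*(A,A)$ exactly to the restrictions of $\cup$ and $[\cdot,\cdot]$ (the case $p=q=0$). Hence the inclusion is a morphism of dg algebras and of dg Lie algebras at the cochain level, and therefore $\rho$ respects both $\cup$ and $[\cdot,\cdot]$ on cohomology; it is unital since the unit of $C^*(A,A)$ maps to the unit of $C_{\sg}^*(A,A)$. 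The main obstacle I anticipate is axiom (iii): unlike (i) and (ii), which follow formally from Propositions \ref{prop4.2}, \ref{prop-lie-algebra} and Corollary \ref{cor-graded-algebra}, the Leibniz rule is not a formal consequence of the dg associative and dg Lie structures in isolation and genuinely requires the homotopy supplied by the brace operations, i.e. Theorem \ref{thm5.2}; care is needed to track signs in the $B_{\infty}$ relations on $C_{\sg}^*(A,A)$, where the bidegree $(m-p)$ rather than $m$ governs the Koszul signs.
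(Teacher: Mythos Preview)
Your proposal is correct and follows essentially the same approach as the paper's proof: axioms (i) and (ii) are read off from Proposition \ref{prop4.2}, Corollary \ref{cor-graded-algebra}, and Proposition \ref{prop-lie-algebra}, while the Leibniz rule is deduced from the $B_\infty$-relations of Theorem \ref{thm5.2}, with the brace $f\{g,h\}$ serving as the homotopy. The paper makes this last step slightly more concrete by invoking the distributivity relation (\ref{equation-B2}) for $(g\cup^{\op} h)\circ f$ and the higher homotopy relation (\ref{equation-B3}) with $l=2$ for $f\circ(g\cup^{\op} h)$, and it leaves the claim about $\rho$ implicit (as you correctly note, it holds already at the cochain level since the $p=q=0$ case recovers the classical operations).
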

Throughout this section, we consider the opposite cup product $f\cup^{\op} g:=(-1)^{|f||g|}g\cup f$ on $C_{\sg}^*(A, A)$.  Since $\cup$ is graded commutative on $\HH_{\sg}^*(A, A)$, we have $\cup=\cup^{\op}$.

\subsection{$B_{\infty}$-algebras}
The brace operations on $C^*(A, A)$, described by Kadeishvili \cite{Kad} and Getzler \cite{Get93},  are a natural generalization of the Gerstenhaber circle product $\circ$ (cf. Section \ref{section-gerstenhaber}). \begin{defin}\label{defin5.41}
For $f\in C^m(A, A)$ and $g_i\in C^{n_i}(A, A)$ where $i=1, \cdots, k$, the {\it brace operation} is defined as
\begin{eqnarray}\label{eqnarray-brace}
\lefteqn{f\{g_1, \cdots, g_k\}(s\overline{a}_{1, N})}\\
&&=\sum\limits_{\substack{1\leq i_1\leq \cdots \leq i_k\leq m\\ i_j+n_j\leq i_{j+1}}} (-1)^{\epsilon} f(s\overline{a}_{1, i_1}, \overline{g_1}(s\overline{a}_{i_1+1, i_1+n_1}), \cdots, s\overline{a}_{i_k}, \overline{g_k}(s\overline{a}_{i_k+1, i_k+n_k}), \cdots, s\overline{a}_N)\nonumber
\end{eqnarray}
where $\epsilon:=\sum_{j=1}^k (n_j-1)i_j$ and $N=m+\sum_{i=1}^k n_k-k$. Recall that $\overline{g_i}:= \pi\circ g_i$ where $\pi: A\rightarrow s\overline{A}$ is the natural projection of degree $-1$.
\end{defin}
Obviously, the brace operation $f\{g_1, \cdots, g_k\}$ is of degree $-k$ and $f\{g_1\}=f\circ g_1$. 
\begin{defin}[\cite{Bau}]\label{def5.4}
A {\it $B_{\infty}$-algebra} structure on a graded vector space $V:=\bigoplus_{n\in\Z} V^n$ is the structure of a dg bialgebra on the tensor coalgebra $(T(sV):=\bigoplus (sV)^{\otimes p}, \Delta)$ such that the element $1\in k=(sV)^{\otimes 0}$ is the unit of $T(sV)$. Here $\Delta: T(sV)\rightarrow T(sV)\otimes T(sV)$ is defined by
$$\Delta(sa_1\otimes \cdots \otimes sa_p)=\sum_{i=0}^p (sa_1\otimes \cdots \otimes sa_i) \otimes (sa_{i+1}\otimes \cdots \otimes sa_p).$$
\end{defin}
Since the tensor coalgebra is cofree and both the differential $D: T(sV)\rightarrow T(sV)$ and the product $m: T(sV)\otimes T(sV)\rightarrow T(sV)$ are compatible with the coproduct, they are determined by a collection of $k$-linear maps $D_p: (sV)^{\otimes p} \rightarrow sV$  of degree $1$ and $m_{p, q}: (sV)^{\otimes p}\otimes (sV)^{\otimes q}\rightarrow (sV)^{\otimes p+ q}$ of degree zero for $p, q\in\Z_{\geq 0}$, subject to  some relations (called $B_{\infty}$-relations) \cite[Section 2.2]{Vor}.

On $C^*(A, A)$, we take $D_1=\delta, D_2=\cup^{\op}$ and $D_p=0$ for $p\neq 1, 2$. Let $m_{1, 0}=m_{0, 1}=\id$ and $m_{1, q}$ be the brace operation. For other $p, q,$  we set $m_{p, q}=0$. Then this collection  $(D_p, m_{p, q})$  defines a $B_{\infty}$-algebra structure on $C^*(A, A)$ (cf. \cite[Theorem 3.1]{Vor}). In this case, the $B_{\infty}$-algebra relations are simplified as follows.
\begin{enumerate}
\item $(C^*(A, A), D_1, D_2)$ is a dg associative algebra.
\item Higher pre-Jacobi identities.
\begin{eqnarray}\label{equation-B1}
x\lefteqn{\{y_{1, m}\}\{ z_{1, n}\}}\\
&&=\sum_{0\leq i_1\leq \cdots \leq i_m\leq n} (-1)^{\epsilon} x\{z_{1, i_1}, y_1\{z_{i_1+1},\cdots\}, \cdots, z_{i_m}, y_m\{z_{i_m+1}, \cdots\}, \cdots, z_n\}\nonumber
\end{eqnarray}
where $\epsilon:=\sum_{p=1}^m \left(\left(|y_p|-1\right)\sum_{q=1}^{i_q}\left(|z_q|-1\right)\right)$.
\item Distributivity.
\begin{eqnarray}\label{equation-B2}
(x_1\cdot x_2)\{y_{1, n}\}=\sum_{k=0}^n(-1)^{|x_2|\sum_{p=1}^k (|y_p|-1)} (x_1\{y_{1, k}\})\cdot (x_2\{y_{k+1, n}\}),
\end{eqnarray}
\item Higher homotopies.
\begin{eqnarray}\label{equation-B3}
\delta(x\{y_{1, l}\}\lefteqn{)-(-1)^{|x|(|y_1|-1)} y_1\cdot (x \{y_{2, l}\})+(-1)^{\epsilon_{l-1}} (x\{y_{1, l-1}\})\cdot y_l}\nonumber\\
&=& \delta(x) \{y_{1, l}\}-\sum_{i=1}^{l-1} (-1)^{\epsilon_i}x\{y_{1, i}, \delta(y_{i+1}),y_{i+2, l}\}\nonumber\\
&&-\sum_{i=1}^{l-2} (-1)^{\epsilon_{i+1}+1}x \{y_{1, i}, y_{i+1}\cdot y_{i+2}, y_{i+3, l}\},
\end{eqnarray}
where $\epsilon_i:=|x|+\sum_{p=1}^i(|y_p|-1)$. For simplicity, we denote $x\cdot y:=x\cup^{\op}y$.
\end{enumerate}
\begin{rem}\label{remark-B}
Conversely,  a collection of $k$-linear maps $(D_p, m_{p, q})$ on a graded space $V$ with $D_{p}=0, p\neq 1, 2$ and $m_{p,q}=0, p>1$, satisfying the above relations (1)-(4), defines a $B_{\infty}$-algebra structure on $V$.
For more details on brace operations and $B_{\infty}$-algebras, one may refer to \cite{Vor, Kau07a, Kel, MaShSt}.
\end{rem}

\begin{figure}
\centering
  \includegraphics[width=90mm]{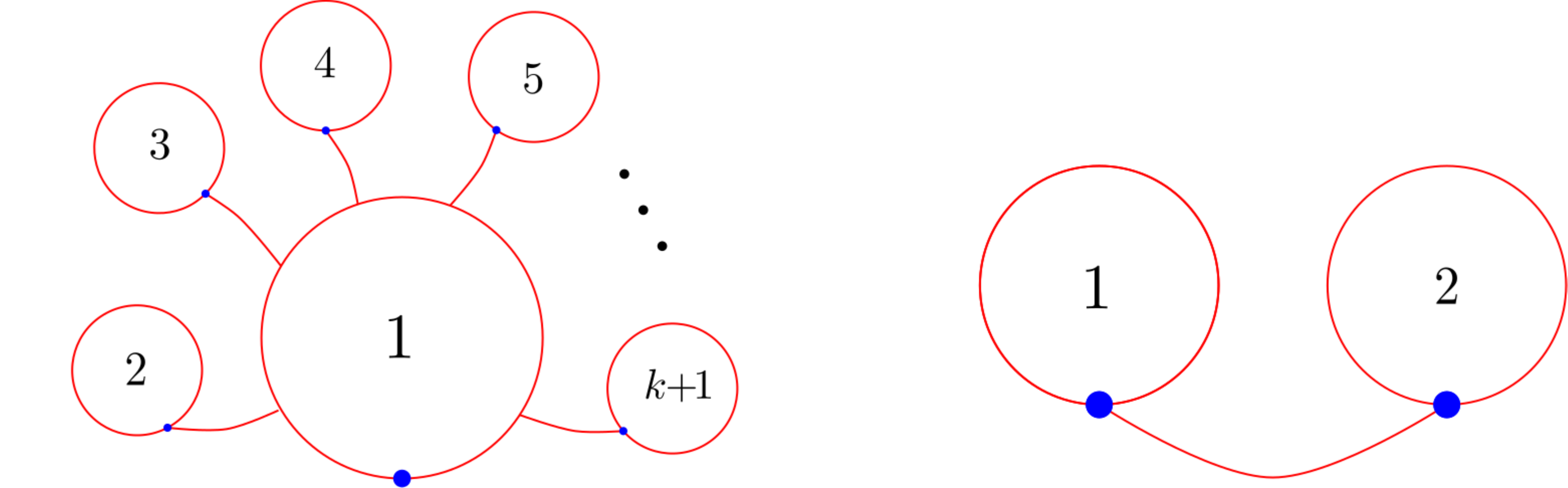}
  \caption{The left  cell  in $CC_k(\Cact(k+1))$  corresponds to the brace operation of degree $-k$ and the right  one in $CC_0(\Cact(2))$ corresponds to the (opposite) cup product.
  }
  \label{Brace-operation}
\end{figure}


\subsection{Brace operations on $C_{\sg}^*(A, A)$}\label{subsection-brace}
In this section, we will extend brace operations on $C^*(A, A)$ to $C_{\sg}^*(A, A)$, using the cactus-like presentations. We prove that the brace operations, with the opposite cup product $\cup^{\op}$, define a $B_{\infty}$-algebra structure on $C_{\sg}^*(A, A)$. 

 Fix $k\in\Z_{\geq 1}$. Let $f'\in C^{m'}_{\sg}(A, A)$ and $g'_i\in C^{n'_i}_{\sg}(A, A)$ for $m', n_i'\in \Z$ and  $i=1, 2, \cdots, k$. Take representatives $f\in C^{m-p}(A, \Omega_{\nc}^{p})$ of $f'$ and $g_i \in C^{n_i-q_i}_{\sg}(A, \Omega_{\nc}^{q_i})$  of $g'_i$, respectively. Here we have $m-p=m'$ and $n_i-q_i=n_i'$ for $i=1, \cdots, k$. 
The brace operation $f'\{g'_1, \cdots, g'_k\}$  is defined as
\begin{equation}
f'\{g_1', \cdots, g'_k\}=\sum\limits_{\substack{0\leq j\leq k\\1\leq i_1< i_2 <\cdots < i_j\leq m\\ 1\leq l_1\leq l_2\leq  \cdots \leq l_{k-j}\leq p }} (-1)^{\epsilon}B^{(i_1, \cdots, i_j)}_{(l_1, \cdots, l_{k-j})}(f; g_1, \cdots, g_k).
\end{equation}
where $B^{(i_1, \cdots, i_j)}_{(l_1, \cdots, l_{k-j})}(f; g_1, \cdots, g_k)$ is illustrated in Figure \ref{Brace-action}, and \begin{eqnarray*}\lefteqn{\epsilon:=\sum_{r=1}^j (n_r'-1)(i_r-r+n_1'+n_2'+\cdots +n_{r-1}')+k-j}\\
&&\ \ \ +\sum_{r=1}^{k-j} (n'_{r+j}-1)(l_{k-j+1-r} +m' +n_1'+\cdots +n_{r+j-1}'+r+j).\end{eqnarray*}

\begin{figure}
\centering
  \includegraphics[width=130mm]{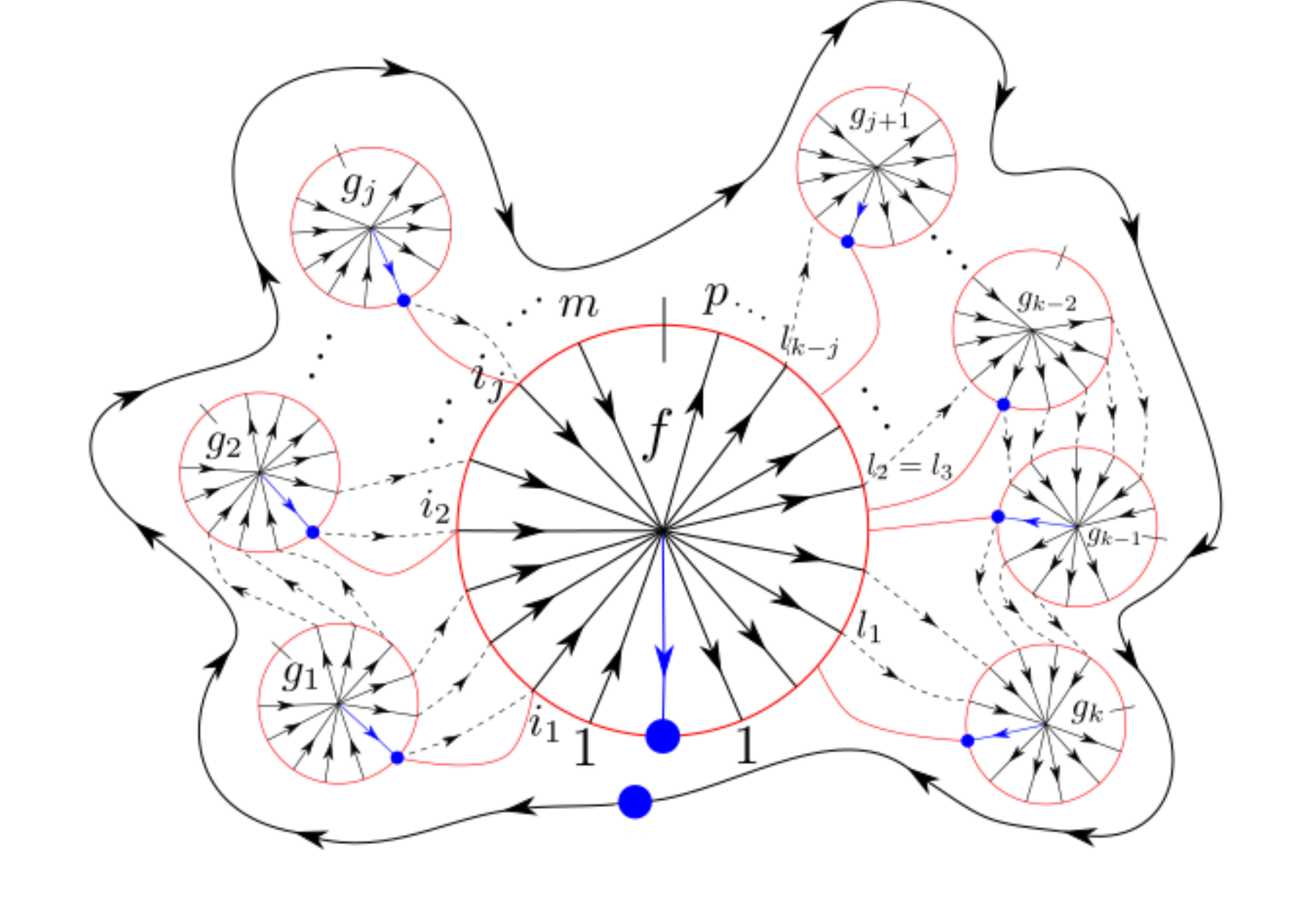}
  \caption{A summand   $B^{(i_1, \cdots, i_j)}_{(l_1, \cdots, l_{k-j})}(f; g_1, \cdots, g_k)$ in the brace operation $f\{g_1, \cdots, g_k\}$. }
  \label{Brace-action}
\end{figure}

Let us  describe the summand $B^{(i_1, \cdots, i_j)}_{(l_1, \cdots, l_{k-j})}(f; g_1, \cdots, g_k)$ in detail. Firstly,  we fix an integer array  $(i_1, \cdots, i_j; l_1, \cdots, l_{k-j}),$ where $0\leq j\leq k$, $1\leq i_1< \cdots <i_j\leq m$ and $1\leq l_1\leq l_2\leq \cdots \leq l_{k-j}\leq p.$  Secondly, we use the cell on the left in Figure \ref{Brace-operation}. We put $f$ into the circle $1$  and $g_i$ into the circle $i+1$, respectively. The inputs and outputs are then  placed according to Process \ref{proc3.4}  described in Section \ref{subsection3.1}, as shown in Figure \ref{Brace-action}. 
 For each $1\leq r\leq j$, the zero point (i.e.   blue dot) of the circle $g_r$ is connected with  the $i_{r}$-th radius in the left semi-circle of $f$ via a red  curve.  For each $1\leq r \leq k-j$, the zero point of the circle $g_{j+r}$ is connected  with the open arc   between the   $(l_{k-j-r+1}-1)$-th   and $l_{k-j-r+1}$-th radii in the right semi-circle via a red  curve. Thirdly, we need  to identify some inputs with outputs. For each $1\leq r\leq j$, add a dashed arrow from the zero point of $g_r$ to the $i_r$-th radius. Starting from the global zero point (i.e.  the zero point of $f$), walk clockwise along the red path (i.e.  the outside circles and the red  curves)  and record the inputs and outputs (including the special outputs of $g_i$) in order as a sequence. When an input is found closely behind an output in this sequence, we call this pair {\it out-in}.  Let us define a process.
\begin{proc}\label{proc5.5}
 Once the pair out-in appears in the sequence, we add a dashed arrow from  the corresponding output to  input  in the graph.  Delete this pair and renew the sequence. Then repeat the above operations until no pair out-in left.
 \end{proc} 
 After applying this Process, we obtain a final sequence with all inputs preceding all outputs. Finally, we translate the updated cactus-like graph  into a treelike graph by putting the inputs (in the final sequence) on the top and outputs on the bottom (e.g. Figure \ref{Brace-action1}).  We therefore get the $k$-linear map  $B^{(i_1, \cdots, i_j)}_{(l_1, \cdots, l_{k-j})}(f; g_1, \cdots, g_k)$ from $s\overline{A}^{\otimes s}$ to $A\otimes s\overline{A}^{\otimes t},$ where $s$ and $t$ are the numbers of the input and  output  in the final sequence, respectively.

 From Lemma \ref{lem5.9} below, it follows that the brace operation $f'\{g_1', \cdots, g_k'\}$  is well-defined, namely,  it does not depend on the choice of representatives $f, g_1, \cdots g_k$.
\begin{rem}\label{rem5.5}
If  $f\in C^m(A, A)$ and $g_i \in C^{n_i}(A, A)$ for $1\leq i\leq k$, we recover the original brace operation $f\{g_1, \cdots, g_k\}$ on $C^*(A, A)$ (cf. Definition \ref{defin5.41}).  The cup product $\cup^{\op}$ on $C_{\sg}^*(A, A)$ can be interpreted as
\begin{equation}\label{equation-cup-interpretation}
f\cup^{\op} g=\mu\{f, g\}
\end{equation}
for $f, g\in C_{\sg}^*(A, A),$ where $\mu$ is the multiplication of $A$ (cf. Remark \ref{remark4.5-mu}). 
\end{rem}

\begin{figure}
\centering
  \includegraphics[width=140mm]{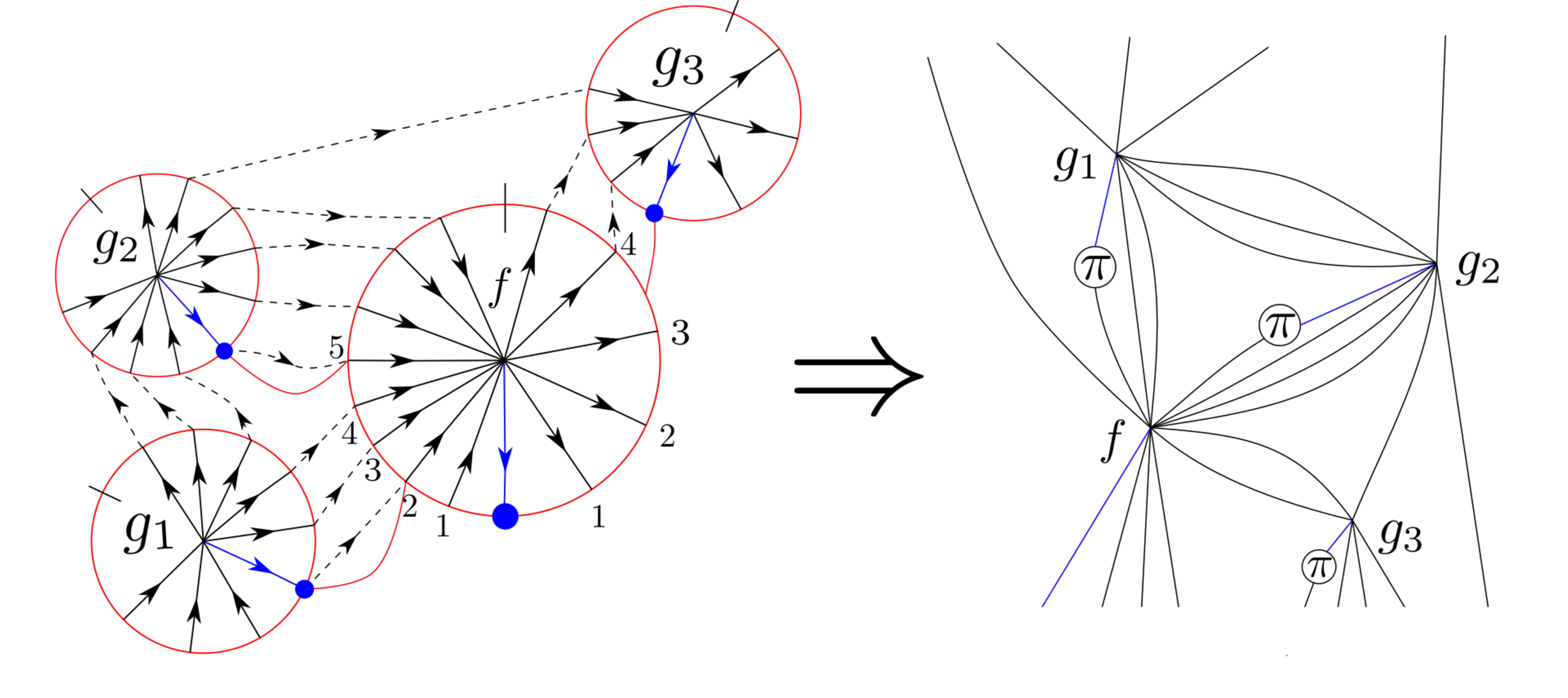}
  \caption{An example of $B^{2, 5 }_{4}(f; g_1, g_2, g_3)$. It corresponds to the linear map $(\id_{s\overline{A}}^{\otimes 4}\otimes \overline{g_3}\otimes \id_{s\overline{A}})\circ (f\otimes \id_{s\overline{A}}^{
  \otimes 2})\circ (\id_{s\overline{A}}^{\otimes 4}\otimes \overline{g_2})\circ (\id_{s\overline{A}}\otimes \overline{g_1} \otimes \id_{s\overline{A}}$).}
  \label{Brace-action1}
\end{figure}


\begin{proof}[Proof of Theorem \ref{thm5.2}]
From  Proposition \ref{prop4.2} and Remark \ref{remark-B},
it is sufficient to verify the identities  (\ref{equation-B1})-(\ref{equation-B3}) for $C_{\sg}^*(A, A)$. From (\ref{equation-delta}) and (\ref{equation-cup-interpretation}) it follows that  (\ref{equation-B3}) is a special case of the identity (\ref{equation-B2}). Hence it remains to check  (\ref{equation-B1}) and (\ref{equation-B2}) for $C_{\sg}^*(A, A)$. The verifications can be done directly by graphic presentations. This proves the theorem.
\end{proof}

\begin{proof}[Proof of Corollary \ref{cor5.3}]
From Corollary \ref{cor-graded-algebra} and Proposition \ref{prop-lie-algebra}, it remains to check the Leibniz rule $[f, g\cup h]=[f, g]\cup h+(-1)^{(|f|-1)|g|} g\cup [f, h]$ for $f, g, h\in \HH_{\sg}^*(A, A)$.
From (\ref{equation-B2}), we have $(g\cup^{\op}h)\circ f=(g\circ f)\cup^{\op} h +g\cup^{\op} (h\circ f)$ in $C_{\sg}^*(A, A)$.  It follows from (\ref{equation-B3}) that $$\delta(\{f\}\{g, h\})-(-1)^{|f|(|g|-1)}g\cup^{\op} (f\circ h)+(-1)^{|f|+|g|+1}((f\circ g)\cup^{\op}  h-f\circ(g\cup^{
\op}h))=0$$ since $\delta(f)=\delta(g)=\delta(h)=0$, thus $f\circ (g\cup^{\op}h)=(f\circ g)\cup^{\op}h +(-1)^{(|f|-1)|g|}g\cup (f\circ h)$ on $\HH^*_{\sg}(A, A)$. This verifies the Leibniz rule. Therefore $(\HH_{\sg}^*(A, A), \cup, [\cdot, \cdot])$ is a Gerstenhaber algebra.
\end{proof}


\subsection{An action of $CC_*(\Cact)$ on $C_{\sg}^*(A, A)$}
In this section we will generalize the brace action to any cell in the cellular chain model $CC_*(\Cact)$. 

In the series of papers \cite{Kau05, Kau07, Kau08}, the author introduced the (topological) operad $\Cact$ of spineless cacti.  He constructed a natural action of the cellular chain model $CC_*(\Cact)$ on $C^*(A,A)$. Let $\Brace$ be the dg suboperad of the endomorphism operad $\Endop(C^*(A, A))$, generated by the cup product and brace operations on $C^*(A, A)$. The author proved that $CC_*(\Cact)$ is  isomorphic to $\Brace$ (cf. \cite[Proposition 4.9]{Kau07a}), and     equivalent to  the operad of chains of the little $2$-discs operad  (cf. \cite[Theorem 3.11]{Kau07a}). As a conclusion, he provided a proof of Deligne's conjecture for $C^*(A, A)$.

\begin{figure}
\centering
  \includegraphics[width=120mm]{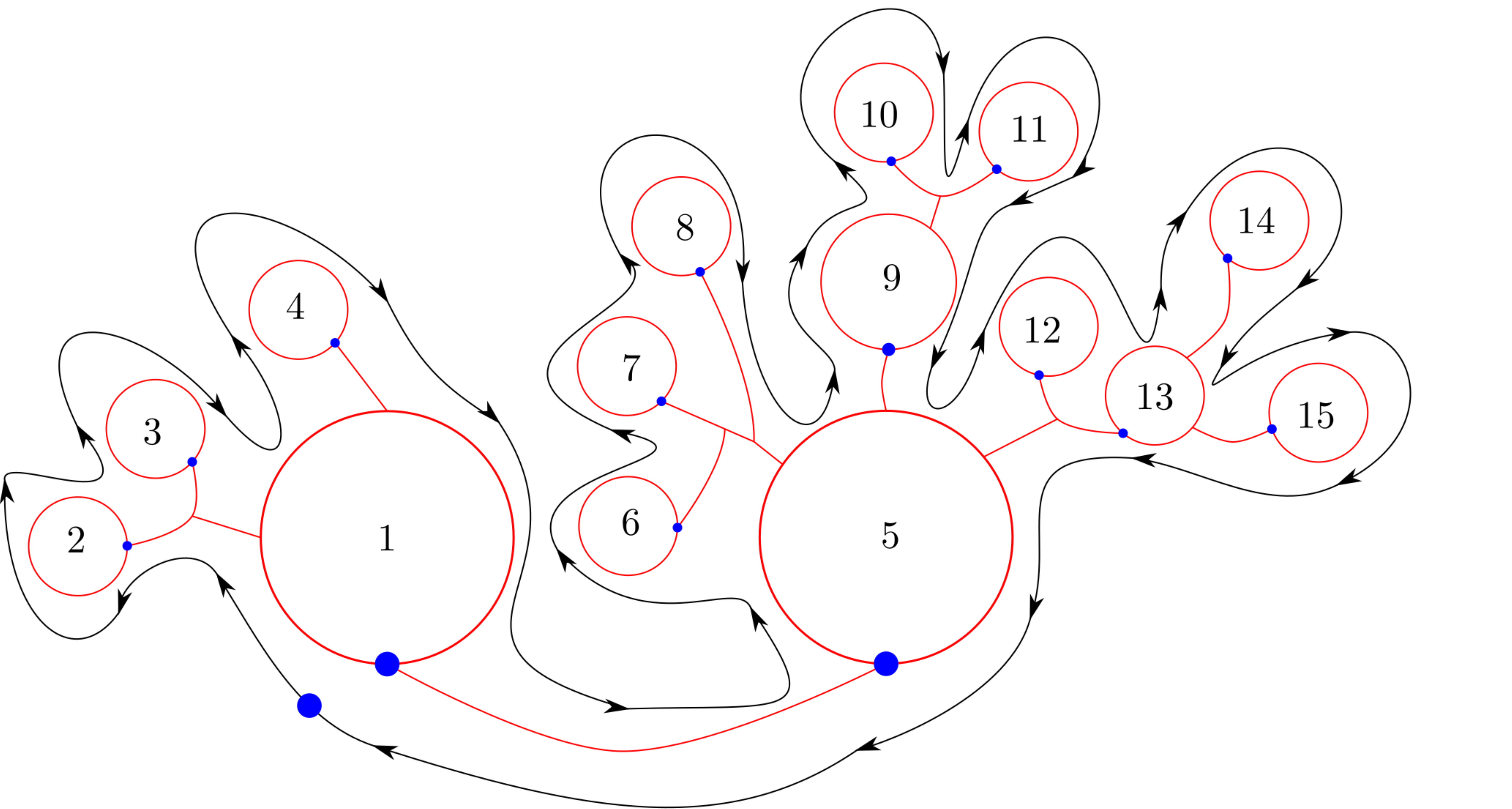}
  \caption{An example of a cell  in $CC_*(\Cact(15))$. It corresponds to the operation (of degree $8$) in $C_{\sg}^*(A, A)$: $f_1\{f_2\cup^{\op} f_3, f_4\})\cup^{\op} (f_5\{f_6\cup^{\op} f_7\cup^{\op} f_8, f_9\{f_{10}\cup^{\op} f_{11}\}, f_{12}\cup^{\op} (f_{13}\{f_{14}, f_{15}\})$.}
  \label{Brace-operation1}
\end{figure}

From the above analysis, any cell in $CC_*(\Cact)$ induces  an action on $C^*(A, A)$. For instance, the cell on the left in  Figure  \ref{Brace-operation} corresponds to  the brace operation of degree $-k$ while the cell on the right corresponds to the (opposite) cup product $\cup^{\op}$. More generally, any cell in $CC_*(\Cact)$  can be represented by a cactus-like graph as shown in  Figure \ref{Brace-operation1}.  Explicitly, the zero point of a circle is indicated by the blue dot. By  a (red) curve connecting two circles,  we mean that the two circles intersect at the endpoints of the  curve (called the  {\it intersection point}). Note that  at least one endpoint of each curve should coincide with the  zero point. In other words, we do {\it not} allow that two circles intersect at non-zero points. 
We  allow that three or more circles intersect at one common point.  For a cell,  there is  only one global zero point (called {\it root}) which may or may not be an intersection point.  In fact, the root is the only  zero point which is not necessary to be an intersection point. If the root indeed is an intersection point, the other endpoint(s) of the red curve(s)  must be zero point(s). Note that
the degree (or dimension) of the cell equals  the number of the intersection points (except the global zero point).  

\begin{rem}
The cactus-like presentations of cells in $CC_*(\Cact)$ described above are slightly different from the ones in  the original papers \cite{Kau05, Kau07a}. We use red curves to indicate  the intersection points. This modification could make it more convenient to define the action of a cell on $C_{\sg}^*(A,A)$.  For more details on $\Cact$ and  $CC_*(\Cact)$, refer to \cite{Kau05, Kau07a}. 
\end{rem}

\begin{figure}
\centering
  \includegraphics[width=150mm]{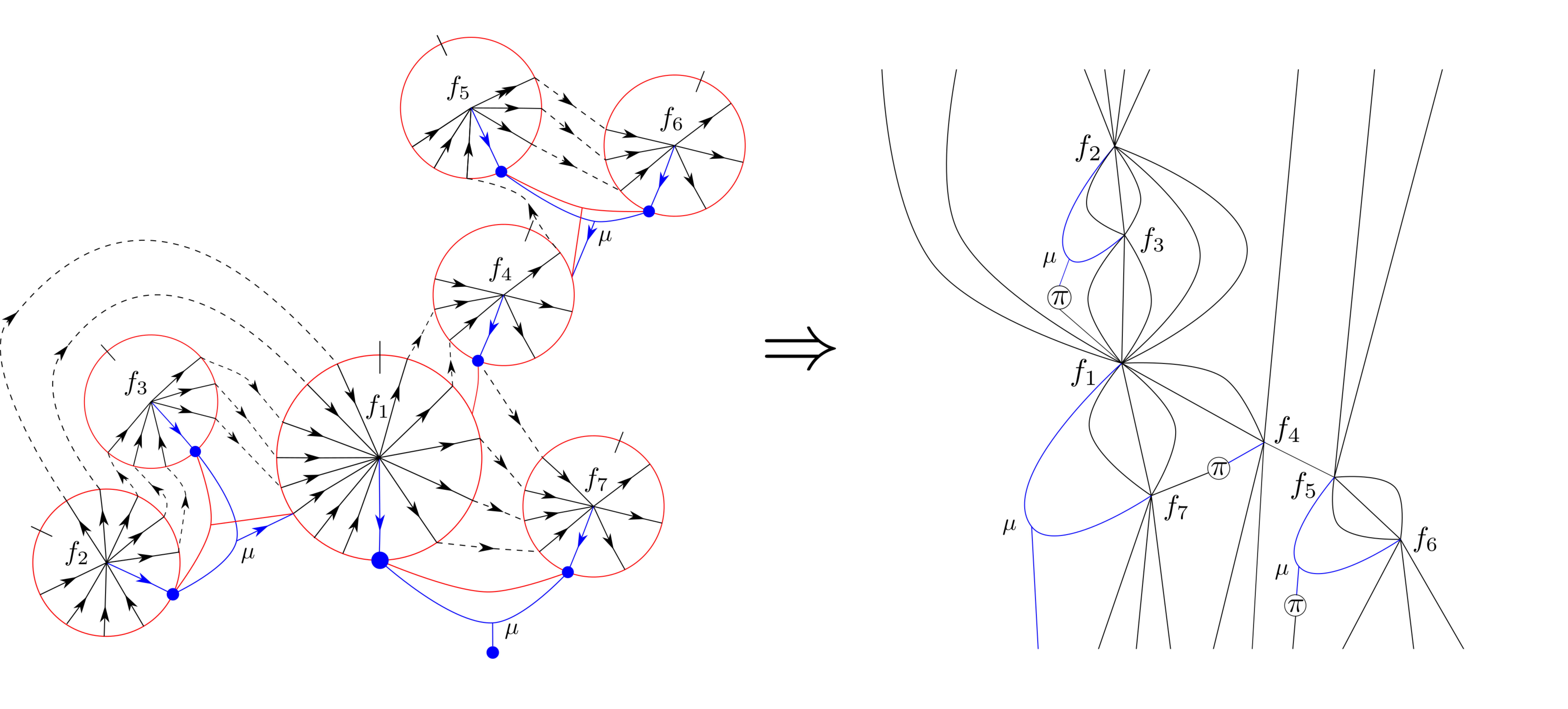}
  \caption{An action   corresponding to the map $(\mu\otimes \id_{s\overline{A}}^{\otimes 5}\otimes (\overline{f_5\cup^{\op} f_6})) (\id_{A}\otimes f_7\otimes \id_{s\overline{A}}^{\otimes 5})(\id_{A}\otimes \id_{s\overline{A}}^{\otimes 3}\otimes \overline{f_4} \otimes \id_{s\overline{A}}^{\otimes 2}
  )  (f_1\otimes \id_{s\overline{A}}^{
  \otimes 3}) (\id_{s\overline{A}}^{\otimes 2}\otimes (\overline{f_2\cup^{\op} f_3}))$ in $C_{\sg}^*(A, A)$.}
  \label{Brace-action2}
\end{figure}

Let us now  generalize the brace operations to any cell in $CC_*(\Cact)$.  Let $\tau$ be a cell in $ CC_l(\Cact(k)) $ of degree $l\in\Z_{\geq 0}$ (e.g. Figure \ref{Brace-operation1}). Take any $k$ elements $f_i'\in C^{m_i'}_{\sg}(A, A)$ for $ 1\leq i\leq k$, the action of $\tau$ on $f'_1\otimes\cdots\otimes f'_k$ , denoted by $\tau(f'_1, \cdots, f'_k)$, is defined as follows.

{\bf First step:}  Choose a representative $f_i\in C^{m_i-p_i}(A, \Omega_{\nc}^{p_i}(A))$ for each $f_i'$, where $m_i-p_i=m_i'$ for $1\leq i\leq k$.  We put $f_i$ into the $i$-th circle of $\tau$ according to Process \ref{proc3.4}. 
Recall that there is only one nonzero endpoint of the red curves at any intersection point (except the global zero point).
We need to fix the {\it type} of an intersection point by moving the nonzero endpoint of the red curves  along the circle so that it either  coincides with the $j$-th radius (i.e.  input, $1\leq j\leq m_i$) of $f_i$,  or  located in the open arc between the $(l-1)$-th and $l$-th radii (i.e.  outputs, $1\leq l\leq p_i$) of $f_i$. Accordingly, we say that the intersection point has type $j$ or $-l$. 
If an intersection point contains more than one red curves, 
 we multiply in order all the corresponding special outputs (i.e.  blue radii), and  then get a new output. 
We stress that when moving the endpoints of  curves along a fixed circle, the order (starting from the zero point in clockwise) of the intersection points must be preserved.


Once the types of all  intersection points are fixed, we arrange them as an integer array, labelled by  curves. This sequence is called a {\it type} of $\tau$.   
Denote the set of all intersection points (except the global zero point) by $I(\tau)$.  Clearly, a type is a map from $I(\tau)$ to $\Z^{l}$, where $l$ is the degree of $\tau$. We denoted by $\mathfrak{T} (\tau)(f_1, \cdots, f_k)$ the set of all types of $\tau$  associated with  $f_1\otimes \cdots \otimes f_k$.


{\bf Second step:}  
For any fixed type $\Phi\in\mathfrak{T}(\tau)(f_1, \cdots, f_k)$, we need to add dashed arrows from  inputs to outputs. 
Starting from the global zero point, walk along the red path (i.e.  outside circles and the red curves); record  the inputs and outputs, except  those already connected by dashed arrows,  as a sequence. We apply Process \ref{proc5.5} to  get  a final cactus-like graph and a sequence in which all inputs precede outputs.

{\bf Third step:} 
 By translating the  above cactus-like graph into the treelike graph, we get a $k$-linear map $\tau(\Phi; f_1, \cdots, f_k): s\overline{A}^{\otimes s} \rightarrow A\otimes s\overline{A}^{\otimes t},$ where $s$ and $t$ are the numbers of inputs and outputs in the final sequence, respectively.  It is clear that $s-t=\sum_{i=1}^km_i'-l$.

Therefore, we have the following definition.
\begin{defin}\label{defin-5.9}
The action of  $\tau\in CC_l(\Cact(k))$  on $f_1'\otimes \cdots \otimes f_k'$ is defined as
$$\tau(f_1', \cdots, f_k'):=\sum_{\Phi \in \mathfrak{T}(\tau)(f_1,\cdots, f_k)} (-1)^{\epsilon(\Phi)}\tau(\Phi; f_1, \cdots, f_k),
$$
where the sign $(-1)^{\epsilon(\Phi)}$ is determined by signs in brace operations since   $\tau$ is generated by  cells corresponding to the cup product and brace operations.
\end{defin}

 \begin{figure}
\centering
  \includegraphics[width=150mm]{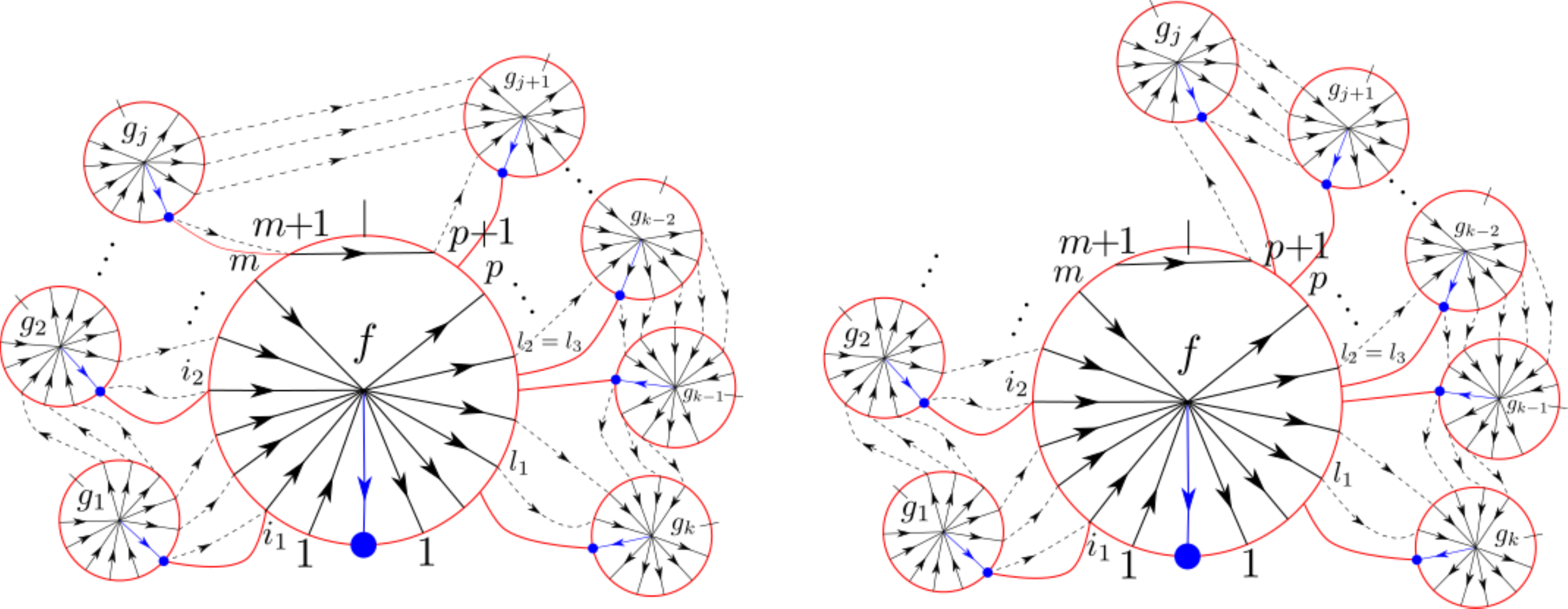}
  \caption{The type $(i_1, \cdots, i_{j-1}, m+1; l_1, \cdots, l_{k-j})$ on the left is cancelled with the type $(i_1, \cdots, i_{j-1}; l_1, \cdots, l_{k-j}, p+1)$ on the right in the identity (\ref{equation5.2}).}
  \label{Indpendent}
 \end{figure}

\begin{lem}\label{lem5.9}
$\tau(f_1', \cdots, f_k')$ is well-defined, namely, it does not depend on the choice of representatives $f_1, \cdots, f_k$.
\end{lem}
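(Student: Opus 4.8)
The plan is to reduce the statement to one elementary move and then analyse that move directly on the cactus-like graphs. By Definition \ref{defin3.2} and Remark \ref{rem3.3}, any two representatives of $f_i'$ differ by finitely many applications of the (injective) embeddings $\theta_p$; changing the chosen representatives one index at a time and inducting on the number of $\theta_p$'s involved, it suffices to fix representatives $f_1,\dots,f_k$ and to show that $\tau(f_1',\dots,f_k')$ is unchanged when a single $f_i$ is replaced by $\theta_{p_i}(f_i)=f_i\otimes\id_{s\overline{A}}$.

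Passing from $f_i$ to $f_i\otimes\id_{s\overline{A}}$ amounts to adding to the $i$-th circle of $\tau$ one extra input radius and one extra output radius, both sitting at the very end of their respective semicircles and joined by an identity chord (cf. Figure \ref{theta-map}). Given a type $\Phi$ for the enlarged configuration, each of the two new radii is either \emph{active} (the nonzero endpoint of one of the red curves) or \emph{passive}, and I would split $\mathfrak{T}(\tau)(f_1,\dots,f_i\otimes\id_{s\overline{A}},\dots,f_k)$ accordingly:
\begin{enumerate}[\upshape(a)]
\item if both new radii are passive, the identity chord is transparent for Process \ref{proc5.5}: running the process and translating to a treelike graph yields exactly the $k$-linear map produced by the corresponding type of $\mathfrak{T}(\tau)(f_1,\dots,f_i,\dots,f_k)$, the extra $\id_{s\overline{A}}$ being carried along and amounting, after the identification of Definition \ref{defin3.2}, to nothing; this sets up a value- and sign-preserving bijection with the types of the original configuration;
\item if exactly one of the two new radii is active, sliding that insertion across the identity chord produces the type in which the other new radius is active instead; the two types give the same treelike map but signs differing by exactly one, so they cancel in the alternating sum (this is the cancellation depicted in Figure \ref{Indpendent});
\item the remaining combination (both new radii active) is handled the same way, by sliding one insertion across the identity chord.
\end{enumerate}
Summing over all types then reproduces $\tau(f_1',\dots,f_k')$ computed with the representative $f_i$, which is what we want.

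It is worth noting why the argument is local. When $\tau$ is the cup-product cell of Figure \ref{Brace-operation}, the assertion is merely the identity $\theta_p(f)\cup^{\op}g=f\cup^{\op}\theta_q(g)=\theta_{p+q}(f\cup^{\op}g)$ already used in Section \ref{section4}; when $\tau$ is a brace cell it is the compatibility of the brace operation of Section \ref{subsection-brace} with the $\theta_p$'s; and since the move above touches only the $i$-th circle, the general case is governed by exactly this local picture glued into a larger cell. The main obstacle is therefore the sign bookkeeping in steps (b) and (c): one must verify, from the explicit sign $\epsilon(\Phi)$ of Definition \ref{defin-5.9} (equivalently, from the brace-sign convention of Section \ref{subsection-brace}), that a type with an insertion at the new input radius and the type with the same insertion slid onto the new output radius carry opposite signs. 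This is a direct, if slightly tedious, computation, entirely parallel to the sign analysis behind the higher pre-Jacobi identity \eqref{equation-B1}.
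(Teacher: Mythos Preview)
Your proposal follows the paper's strategy closely. The paper first reduces to the generating cells of $CC_*(\Cact)$ --- the cup product and the brace cells --- using that any $\tau$ is an iterated composite of these; the cup case is already Proposition~\ref{prop4.2}, and for the brace case the paper uses exactly the sliding involution you describe: types not touching the new input or the new output arc are in bijection with the original types, while each remaining type is cancelled by moving the red curve adjacent to the boundary between input $m{+}1$ and output arc $p{+}1$ to the other side (Figure~\ref{Indpendent}). You attempt the same cancellation directly on a general cell, and your closing paragraph already identifies the brace/cup reduction as the underlying mechanism, so the two arguments coincide.

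One small imprecision is worth fixing: your split into cases (b) and (c) is not stable under the sliding involution. The output-arc indices satisfy only the non-strict inequality $l_1\le\cdots\le l_{k-j}$, so several curves can occupy the new output arc simultaneously; a type with two curves at arc $p{+}1$ and none at input $m{+}1$ sits in your case (b), but sliding its outermost curve to input $m{+}1$ lands in case (c), not in case (b) as your description of (b) suggests. The remedy is to drop the (b)/(c) dichotomy and state the involution uniformly: among all remaining types, move the unique curve closest to the boundary between the new input radius and the new output arc to the other side. This is an involution on the set of remaining types, and paired types give the same treelike map with opposite sign --- which is precisely what the paper asserts (and, like you, leaves the sign check as a direct computation).
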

\begin{proof}
Since the action $\tau$ on $f_1, \cdots, f_k$ can be written as the (opposite) cup product and compositions of brace operations,  it is sufficient to prove that $\cup^{\op}$ and brace operations  are independent of the choice of representatives. From Proposition \ref{prop4.2}, it follows that the cup product $\cup^{\op}$ is well-defined on $C_{\sg}^*(A, A)$. Thus it remains to check the following  identities on $C_{\sg}^*(A, A),$
\begin{equation}\label{equation5.2}
\begin{split}
f\{g_1, \cdots, g_k\}&=(f\otimes \id_{s\overline{A}})\{g_1, \cdots, g_k\}\\
f\{g_1, \cdots, g_k\}&=f\{g_1, \cdots, g_i\otimes \id_{s\overline{A}}, \cdots, g_k\}
\end{split}
\end{equation}
where $1\leq i\leq k$ and $f\in C^{m-p}(A, \Omega_{\nc}^p(A)), g_i\in C^{n_i-q_i}(A, \Omega_{\nc}^{q_i}(A)).$  Let us check the first identity. Observe that all the terms on the left hand side are cancelled out by terms on the right hand side. We need to cancel out  the remaining terms on the right hand side. Note that  the cactus-like presentation of each remaining term has the following property: there is a red curve connecting  with the $(m+1)$-th input or  the open arc between the $p$-th and $(p+1)$-th outputs of $f$.  Assume that the circle $g_j$ intersects with $f$ at the $(m+1)$-th input via a red curve (cf. the left graph in Figure \ref{Indpendent}), this term will cancel with the one whose cactus-like presentation is obtained by just moving the red chord   into  the open arc between $p$-th and $(p+1)$-th output of $f$ (cf. the right graph in Figure
\ref{Indpendent}). In this way,  all the remaining terms  cancel out. This verifies the first identity.
The second identity can  be verified by the same argument.  This proves the lemma.
 \end{proof}

\begin{figure}
\centering
 \includegraphics[width=130mm]{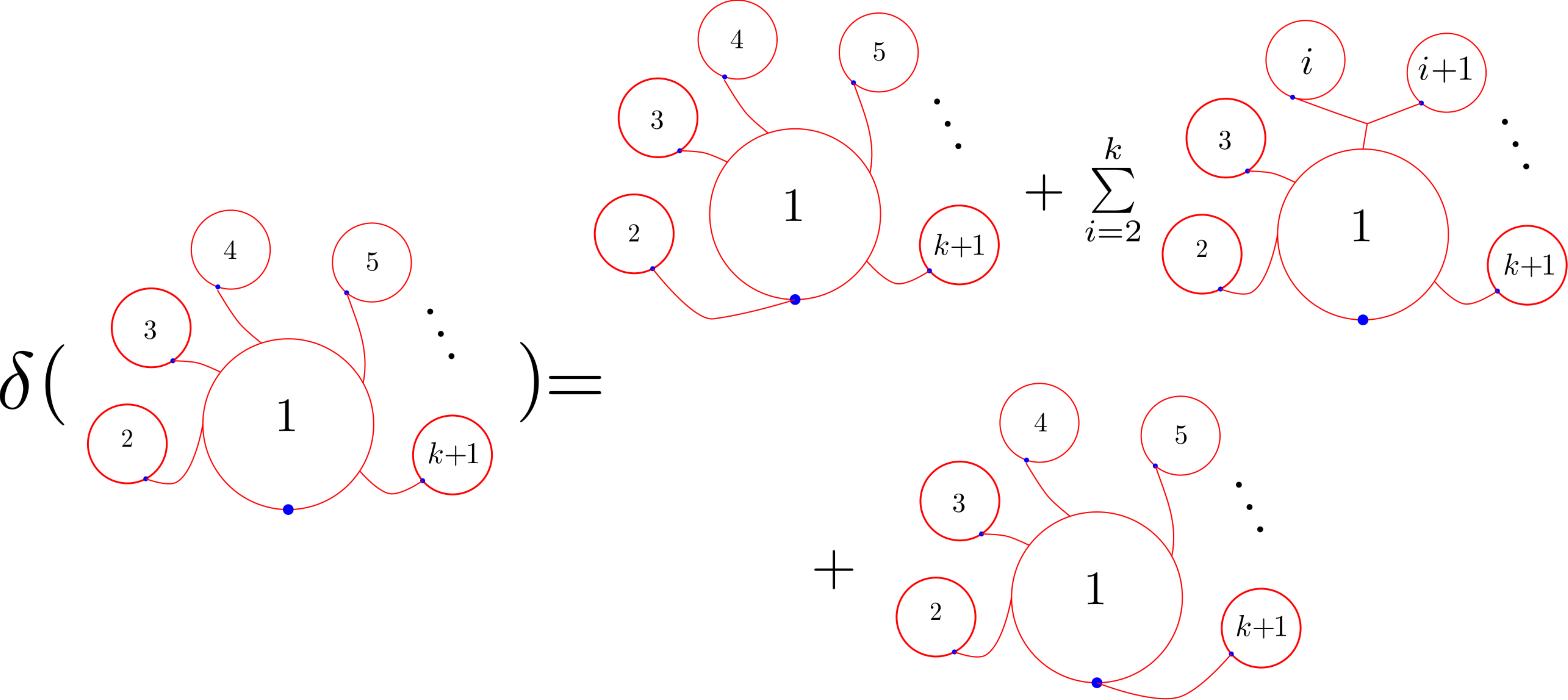}
  \caption{The differential in $CC_*(\Cact)$.} \label{BV}
  \end{figure}

 \begin{proof}[Proof of Theorem \ref{thm5.1}]
 Since $CC_*(\Cact)$ is equivalent to the operad of chains of the little $2$-discs operad (cf. \cite[Proposition 4.9]{Kau07a}), it is sufficient to prove that the action of $CC_*(\Cact)$ (cf.  Definition \ref{defin-5.9}) induces a morphism of dg operads $\varphi: CC_*(\Cact)\rightarrow \Endop(C_{\sg}^*(A, A)).$ It is not difficult to show that $\varphi$ is compatible with the compositions. Let us prove that $\varphi$ is  compatible with the differentials. Since $CC_*(\Cact)$ is generated by the cells as shown in Figure \ref{Brace-operation}, it is sufficient to check $\varphi(\delta(\tau))=\delta(\varphi(\tau))$, where $\tau$ is the cell corresponding to the brace operation.  From Figure \ref{BV} it follows that to prove the above identity  is equivalent to prove (\ref{equation-B3}). This proves the theorem.
\end{proof}


\section{An application to self-injective algebras}\label{section6}
\subsection{Generalized Tate-Hochschild complex} Before  the case of self-injective algebras, let us  start with a more general setting. Let $A$ be an associative algebra over a field $k$. Denote by $A^{\vee}:=\Hom_{A\otimes A^{\op}}(A, A\otimes A^{\op}).$ 
It is clear that $A^{\vee}$ is isomorphic to the zeroth Hochschild cohomology $\HH^0(A, A\otimes A^{\op})$. Thus we have
$$A^{\vee}\cong \left\{ \sum_i x_i\otimes y_i \in A\otimes A \ | \ \mbox{  $\sum_i ax_i\otimes y_i=\sum_i x_i\otimes y_ia,$ for any $a\in A$} \right\},$$
where the isomorphism sends $\alpha\in A^{\vee}$ to $\alpha(1).$ Note that $A^{\vee}$ has an $A$-$A$-bimodule structure: For any $\sum_i x_i\otimes y_i\in A^{\vee}$ and $a, b \in A$, the action is   $a\cdot \left(\sum_i x_i\otimes y_i\right)\cdot b:=\sum_i x_ib\otimes ay_i.$

Recall that  $(C_*(A, A^{\vee}), b)$ is the Hochschid chain complex with coefficients in $A^{\vee}$.  We now construct an unbounded complex 
\begin{equation}\label{equation-Tate-complex}
\calD^*(A, A): \cdots\xrightarrow{b_2} C_1(A, A^{\vee})\xrightarrow{b_1} A^{\vee}\xrightarrow{\mu} A\xrightarrow{\delta^0} C^1(A, A)\xrightarrow{\delta^1} \cdots
\end{equation}
where $C^i(A, A)$ is in degree $i$ and $\mu: A^{\vee}\rightarrow A$ is given by the multiplication of $A$.
Let us denote the cohomology of $\calD^*(A, A)$ by $\THH^*(A, A)$.
\begin{lem}\label{lemma6.1}
There is a natural embedding of complexes $\iota^*: \calD^*(A, A)\hookrightarrow C_{\sg}^*(A, A)$.
\end{lem}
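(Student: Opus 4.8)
The plan is to construct $\iota^*$ one degree at a time and then check compatibility with the differentials square by square. In non-negative degrees I take $\iota^*$ to be the canonical inclusion $C^n(A,A)=C^n(A,\Omega_{\nc}^0(A))\hookrightarrow C_{\sg}^n(A,A)$ coming from the filtration of Remark~\ref{rem3.3}, which is injective because every $\theta_p$ is. In negative degree $-i-1$ (for $i\in\Z_{\geq 0}$), I define $\iota^*$ on $C_i(A,A^{\vee})=A^{\vee}\otimes (s\overline{A})^{\otimes i}$ as the composite
\[
C_i(A,A^{\vee})\xrightarrow{\ \lambda_i\ } C^{-i-1}(A,\Omega_{\nc}^{i+1}(A))\hookrightarrow C_{\sg}^{-i-1}(A,A),
\]
where the second arrow is the filtration inclusion at level $p=i+1$, and where, under the identification $C^{-i-1}(A,\Omega_{\nc}^{i+1}(A))=\Hom((s\overline{A})^{\otimes 0},A\otimes(s\overline{A})^{\otimes i+1})=\Omega_{\nc}^{i+1}(A)$, the map $\lambda_i$ sends $\bigl(\sum_j x_j\otimes y_j\bigr)\otimes s\overline{a}_{1,i}$ to $(-1)^{c_i}\sum_j x_j\otimes s\overline{a}_{1,i}\otimes s\overline{y_j}$ for a sign $(-1)^{c_i}$ to be fixed below. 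In other words $\lambda_i$ is the inclusion $A^{\vee}\hookrightarrow A\otimes A$ followed by the reordering $A\otimes A\otimes(s\overline{A})^{\otimes i}\to A\otimes(s\overline{A})^{\otimes i}\otimes A$ and by $\id_A\otimes\id_{s\overline{A}}^{\otimes i}\otimes\pi$, with $\pi\colon A\to s\overline{A}$ the projection of degree $-1$.

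Injectivity of $\iota^*$ in negative degrees reduces, since $(s\overline{A})^{\otimes i}$ is free over $k$, to injectivity of $A^{\vee}\to A\otimes s\overline{A}$, i.e. to the vanishing of $A^{\vee}\cap(A\otimes k\!\cdot\!1)$ inside $A\otimes A$; this follows from the defining relation of $A^{\vee}$ by a short basis argument: if $x\otimes 1\in A^{\vee}$ then $ax\otimes 1=x\otimes a$ for all $a\in A$, and expanding the right-hand side in a basis of $A$ adapted to $k\!\cdot\!1$ forces $x=0$.

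It remains to see that $\iota^*$ is a chain map, which splits into three kinds of squares. The squares lying entirely in non-negative degrees commute because the filtration inclusion $C^*(A,A)\hookrightarrow C_{\sg}^*(A,A)$ is a morphism of complexes by Lemma~\ref{lemma3.1}. For the ``junction'' square at degrees $-1,0$, a direct computation of the Hochschild cochain differential on $C^*(A,\Omega_{\nc}^1(A))$ gives $\delta(\lambda_0(\xi))=\pm\,\theta_0(\mu(\xi))$: two of the three terms of $\delta(\lambda_0(\xi))$ cancel after applying $\id_A\otimes\pi$ precisely because of the relation $\sum_j a x_j\otimes y_j=\sum_j x_j\otimes y_j a$ cutting out $A^{\vee}$, leaving $\sum_j x_j y_j\otimes s\overline{a}_1=\mu(\xi)\otimes s\overline{a}_1$. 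For the squares in strictly negative degrees one compares the Hochschild chain differential $b_i\colon C_i(A,A^{\vee})\to C_{i-1}(A,A^{\vee})$ with $\delta\colon C_{\sg}^{-i-1}(A,A)\to C_{\sg}^{-i}(A,A)$; since $\delta(\lambda_i(w))$ a priori lies in $C^{-i}(A,\Omega_{\nc}^{i+1}(A))$ while $\lambda_{i-1}(b_i(w))$ lies in $C^{-i}(A,\Omega_{\nc}^{i}(A))$, the identity to be proved inside $C_{\sg}^{-i}(A,A)$ is $\delta(\lambda_i(w))=\theta_i(\lambda_{i-1}(b_i(w)))$. Expanding the right action $\blacktriangleleft$ on $\Omega_{\nc}^{i+1}(A)$ via Lemma~\ref{lem0}, the interior merging terms of $\delta$ match the interior terms of $b_i$ verbatim, the two outermost $\blacktriangleleft$-terms match the $\xi\!\cdot\!a_1$ and $a_i\!\cdot\!\xi$ terms of $b_i$, and the single remaining $\blacktriangleleft$-term is cancelled against the leftmost term $a\mapsto a\cdot\lambda_i(w)$ of $\delta$ exactly by the defining relation of $A^{\vee}$ (applied after $\id_A\otimes\id_{s\overline{A}}^{\otimes i}\otimes\pi$).

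The only real work is this last family of squares together with the sign bookkeeping: the sign $(-1)^{c_i}$ in the definition of $\lambda_i$ must be chosen — in the same spirit as the sign $(-1)^p$ in the isomorphism $\alpha_p^{-1}$ of Lemma~\ref{lemma1} — so that all three families of squares commute on the nose. I expect the detailed matching of the chain differential with the cochain differential (and fixing these signs) to be the main obstacle, rather than anything conceptual; the conceptual heart, that the ``boundary'' terms of the chain complex with coefficients in $A^{\vee}$ are absorbed by the relation defining $A^{\vee}$ so that $C_*(A,A^{\vee})$ glues onto $C_{\sg}^*(A,A)$ at level $p=i+1$, is already visible in the junction square.
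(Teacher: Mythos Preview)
Your approach is essentially identical to the paper's: the same definition of $\iota^*$ degree by degree (the paper simply takes $c_i=0$, so no extra sign is needed), and the same verification of the chain-map condition via Lemma~\ref{lem0} together with the defining relation $\sum_j bx_j\otimes y_j=\sum_j x_j\otimes y_jb$ of $A^{\vee}$. The paper treats all $i<0$ in one uniform computation rather than isolating the junction square, and it does not argue injectivity separately; your injectivity argument is a welcome addition (note it tacitly assumes $A\neq k$).
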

\begin{proof}
For $i\geq 0$, it is known from Definition \ref{defin3.2}  that $C^i(A, A)$ is a subspace of $C_{\sg}^i(A, A)$.  For $i<0$,  we define a map
$\iota^{i}: C_{-i-1}(A, A^{\vee}) \rightarrow C^{i}_{\sg}(A, A) $ by the following formula  $$\iota^i(\alpha):=\sum_j x_j\otimes \overline{a}_{1, -i-1}\otimes \overline{y_j}\in \Omega_{\nc}^{-i}(A)\subset C^{-i}_{\sg}(A, A), $$ where $\alpha=\left(\sum_j x_j\otimes y_j\right)\otimes \overline{a}_{1, -i-1}$.
Then we need to  check $\iota^*\circ \partial^*=\partial^*\circ \iota^*$. For $i\geq 0$, it is clear that  $\iota^{i+1}\circ \partial^i=\delta^{i}\circ \iota^i$. For $i<0$, we claim  $\theta_0\circ\iota^{i+1}\circ \partial^i=\delta^i\circ \iota^{i}$. Indeed, 
\begin{equation*}
\begin{split}
(\theta_0\circ \iota^{i+1}\circ \partial^i(\alpha))(\overline{b})=&(-1)^{i-1}\sum_j (x_j\otimes \overline{a}_{1, -i-1})\blacktriangleleft y_j\otimes \overline{b}\\
=&(-1)^{i}\sum_j (x_j\otimes \overline{a}_{1, -i-1}\otimes \overline{ y_j})\blacktriangleleft \overline{b}-(-1)^{i}x_j\otimes \overline{a}_{1, -i-1}\otimes \overline{ y_jb}\\
=&\delta^i\circ \iota^i(\alpha)(\overline{b}),
\end{split}
\end{equation*}
where the second identity follows from Lemma \ref{lem0} and the third one follows from  $\sum\limits_jx_j\otimes y_jb=\sum\limits_jbx_j\otimes y_j$.  This proves the lemma.
\end{proof}
\subsection{$\star$-product on $\calD^*(A, A)$}\label{section-star2}


Let $A$ be an associative algebra (not necessarily, self-injective) over a field $k$.
We construct a product (of degree zero), called {\it $\star$-product}, on $\calD^*(A, A)$
\begin{equation}\label{equation-Star-product}
\star: \calD^*(A, A)\otimes \calD^*(A,A)\rightarrow \calD^*(A, A),
\end{equation}
which extends the cup product on $C^*(A, A)$ and the cap product between $C^*(A, A)$ and $C_*(A, A^{\vee})$.
\begin{enumerate}[\upshape (i)]
\item For $p, q\geq 0$, define
$\star: C_q(A, A^{\vee})\otimes C_p(A, A^{\vee})\rightarrow C_{p+q+1}(A, A^{\vee})$  by  $$\alpha\star\beta=\sum_{i, j}(x_j'x_i \otimes y_j') \otimes s\overline{a}_{1, p}\otimes  s\overline{y_i}\otimes s\overline{b}_{1, q},$$ where  $\alpha=\left(\sum_ix_i\otimes y_i\right) \otimes s\overline{a}_{1, p}$ and $\beta=\left(\sum_j x_j'\otimes y_j'\right)\otimes s\overline{b}_{1,q}.$
\item For $m, p\in\Z_{\geq 0}$ such that $p\geq m$, define $\star: C_p(A, A^{\vee})\otimes C^m(A, A)\rightarrow C_{p-m}(A, A^{\vee})$ as the usual cap product. Namely, for $f\in C^m(A,A)$ and $\alpha=\left(\sum_ix_i\otimes y_i\right) \otimes s\overline{a}_{1, p}$, we have
$$\alpha\star f:=\sum_i (x_i\otimes f(s\overline{a}_{p-m+1, p})y_i)\otimes s\overline{a}_{1, p-m}.$$
Similarly, we define $\star: C^m(A, A)\otimes C^p(A, A^{\vee})\rightarrow C_{p-m}(A, A^{\vee})$ by $$f\star \alpha:=\sum_i (x_if(s\overline{a}_{1, m})\otimes y_i)\otimes s\overline{a}_{m+1, p}.$$
\item For $m, p\in \Z_{\geq 0}$ such that $p<m$, define $\star: C^m(A, A)\otimes C_p(A, A^{\vee})\rightarrow C^{m-p-1}(A, A)$ by the following formula,
$$ f\star \alpha(s\overline{b}_{1, m-p-1}):=\sum_if(s\overline{b}_{1, m-p-1}\otimes s\overline{x_i}\otimes s\overline{a}_{1, p})y_i.$$
Similarly, $\star: C_p(A, A^{\vee})\otimes C^m(A, A)\rightarrow C^{m-p-1}(A, A)$ is defined by
$$\alpha\star f(s\overline{b}_{1, m-p-1})=\sum_i x_if(s\overline{a}_{1, p}\otimes s\overline{y_i}\otimes s\overline{b}_{1, m-p-1}).$$
\item For $m, n\in\Z_{\geq 0}$, we define $f\star g:=f\cup g,$ for  $f\in C^m(A, A)$ and $g\in C^n(A, A)$.
\end{enumerate}
\begin{lem}
The $\star$-product is compatible with the differential $\partial$ in $\calD^*(A, A)$.
As a result, it induces a well-defined product (still denoted by $\star$) on the cohomology $\THH^*(A, A)$.
\end{lem}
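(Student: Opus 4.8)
The plan is to prove the graded Leibniz identity
\[
\partial(u\star v)=\partial(u)\star v+(-1)^{|u|}\,u\star\partial(v)
\]
for all homogeneous $u,v\in\calD^*(A,A)$ (inserting, if necessary, Koszul signs in the formulas (i)--(iv) so that this precise form holds). Granting the identity, the second assertion is formal: if $u$ and $v$ are $\partial$-cocycles then so is $u\star v$, and if moreover $u=\partial(w)$ (resp. $v=\partial(w)$) then $u\star v=\partial(w\star v)$ (resp. $u\star v=\pm\,\partial(u\star w)$) is a coboundary, so $\star$ descends to $\THH^*(A,A)$.

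I would organize the verification by the signs of $|u|$ and $|v|$ and by the degree of the product, obtaining five ranges. (a) If $u,v$ both lie in non-negative degrees then $u\star v=u\cup v$ and $\partial=\delta$, and the identity is exactly the case $p=q=0$ of Lemma~\ref{lemma4.1}. (b) If $u\in C_q(A,A^{\vee})$ and $v\in C_p(A,A^{\vee})$ both lie in strictly negative degrees, with $u\star v\in C_{p+q+1}(A,A^{\vee})$ and $\partial=b$, one expands $b(u\star v)$ using the formula for the Hochschild chain differential and matches it with $(bu)\star v\pm u\star(bv)$; the two terms straddling the separating letter $s\overline{y_i}$ in the bar word have no visible partner and cancel by the defining relation $\sum_i a x_i\otimes y_i=\sum_i x_i\otimes y_i a$ of $A^{\vee}$. (c) If one factor is non-negative, the other strictly negative, and the product still non-negative, this is the cap product (ii), and the identity is the standard compatibility of the Hochschild cap product with the differentials, transcribed for coefficients in $A^{\vee}$. (d) If one factor is non-negative, the other strictly negative, and the product strictly negative, this is the ``overflow'' cap product (iii), in which the letters $s\overline{x_i}$ and $s\overline{a}_{1,p}$ of the chain argument are fed into the cochain; here one matches $\delta(f\star\alpha)$ with $(\delta f)\star\alpha\pm f\star(b\alpha)$, and again the extreme terms of $b\alpha$ together with the $\delta$-terms acting at the seam between the two halves are reconciled via $\sum_i a x_i\otimes y_i=\sum_i x_i\otimes y_i a$. (e) Finally the junction cases, where one factor lies in $A$ (degree $0$) or in $A^{\vee}$ (degree $-1$), so that the relevant piece of $\partial$ is $\mu$ or $\delta^0$: these require a short direct check of the compatibility of $\star$ with $\mu$.

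In every range the computation has the same flavour as the proofs of Lemmas~\ref{lemma3.1}, \ref{lemma4.1} and~\ref{lemma6.1}: substitute the explicit formula for the relevant piece of $\partial$ into that for $\star$, use the identity of Lemma~\ref{lem0} to rewrite the terms produced by the right action $\blacktriangleleft$ on $\Omega_{\nc}^{\bullet}(A)$ (which enters implicitly once the cap products are viewed inside $C_{\sg}^*(A,A)$), and apply the $A^{\vee}$-relation to move a factor past the coefficient $\sum_i x_i\otimes y_i$. I do not expect the embedding $\iota^*$ of Lemma~\ref{lemma6.1} to provide a shortcut: it is a morphism of complexes but not of algebras --- already for the cap products (ii)--(iii) the product in $C_{\sg}^*(A,A)$ of $\iota^*$ of a cochain and $\iota^*$ of a chain is not $\iota^*$ of their $\star$-product --- so the compatibility of $\star$ with $\partial$ must be obtained by hand.

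The main obstacle I anticipate is the bookkeeping of signs: $\partial$ changes character twice along $\calD^*(A,A)$ (it is $b$ in negative degrees, $\mu$ at the seam, $\delta$ in non-negative degrees), and the Koszul conventions for $b$, for $\mu$, and for the cup and cap products must be fixed so that all of (a)--(e) fit together. I would therefore treat the cases in order of increasing delicacy --- first (a) and the non-negative part, then the $\mu$-junction of (e), then the cap products (ii)--(iii), and finally the purely negative case (i) --- so that the sign normalizations pinned down by the early cases are precisely the ones used in the later ones; beyond the signs, the only genuine subtlety, arising in (b) and (d), is to isolate the boundary terms that have no visible partner and cancel them using the relation $\sum_i a x_i\otimes y_i=\sum_i x_i\otimes y_i a$.
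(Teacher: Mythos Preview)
Your proposal is correct and takes the same approach as the paper: the paper's proof is literally ``This follows from straightforward computations,'' and your case-by-case plan (non-negative/non-negative via Lemma~\ref{lemma4.1}, the two cap-product ranges, the purely negative range, and the junction through $\mu$) is exactly the structure those computations must have. Your identification of the one non-obvious mechanism---using the $A^{\vee}$-relation $\sum_i ax_i\otimes y_i=\sum_i x_i\otimes y_ia$ to cancel the seam terms in cases~(b) and~(d)---is on target.

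One side remark: you assert that the embedding $\iota^*$ of Lemma~\ref{lemma6.1} is not a morphism of algebras at the cochain level and so cannot provide a shortcut. The paper, in proving the Corollary after Proposition~\ref{prop6.9}, states the opposite (``$\iota^*$ is compatible with the products $\star$ and $\cup$ at the cochain level''). Whether or not that later sentence is meant literally or only up to the homotopies exhibited in Proposition~\ref{prop4.4-1} and Proposition~\ref{prop6.9}, it does not matter for the present lemma: even if $\iota^*$ were strictly multiplicative, deducing the Leibniz rule for $\star$ from that of $\cup$ via injectivity of $\iota^*$ would require first checking multiplicativity, which is a computation of the same shape and difficulty as the one you propose. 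Your decision to verify compatibility directly is therefore the right one.
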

\begin{proof}
This follows from straightforward computations.
\end{proof}
\begin{rem}\label{remark6.7}
In general,  the  $\star$-product restricted to the complex $C_*(A, A^{\vee})$ is not a chain map since $\partial(\alpha\star \beta)\neq \pm\alpha\star\partial( \beta)$ if $\alpha\in C_{0}(A, A^{\vee})$. In order to make it well-defined, we have to extend the $\star$-product from $C_*(A, A^{\vee})$ to  $\calD^*(A, A)$.
Assume that $A$ is a commutative symmetric algebra. The $\star$-product restricted to $C_{>0}(A, A)$ coincides with the so-called Abbaspour product (see \cite[Theorem 6.1]{Abb}) motivated by certain operations in string topology. For more details and  further investigation, one may refer to \cite{RiWa}.
\end{rem}
\begin{rem}
In general, the $\star$-product on $\calD^*(A, A)$ is {\bf not} associative although it is well-known that the associativity holds when restricted to either $\calD^{\geq 0}(A, A)$ or $\calD^{<0}(A, A)$. For instance, let $\alpha:=(\sum_ix_i\otimes y_i)\otimes s\overline{a}_{1, p},  \beta:=(\sum_j x_j'\otimes y_j')\otimes s\overline{b}_{1, q},$ and $f\in C^m(A, A)$, where $p, q>m>0$. We have
\begin{eqnarray*}(\alpha\star f)\star \beta-\alpha\star(f\star \beta)\lefteqn{=\partial m_3(\alpha, f, \beta)-m_3(\partial(\alpha), f, \beta)-}\\&(-1)^{p-1}m_3(\alpha, \partial(f), \beta)-(-1)^{m-p-1}m_3(\alpha, f, \partial(\beta)),
\end{eqnarray*}
where \begin{eqnarray*}m_3(\alpha, f, \beta\lefteqn{)=\sum_{i, j}\sum_{k=1}^m (-1)^{(m-1)(m-k-1)}}\\&& (x_j'x_i\otimes y_j')\otimes s\overline{a}_{1, p-m+k}\otimes f(s\overline{a}_{p-m+k+1, p} \otimes s\overline{y_i}\otimes s\overline{b}_{1, k-1})\otimes s\overline{b}_{k, p}.\end{eqnarray*}
This means that the associativity holds up to homotopy. From this point of view, it might be interesting to ask whether this extends to an $A_{\infty}$-algebra structure with $(\partial, \star, m_3, \cdots)$ on $\calD^*(A, A)$. In \cite[Proposition 6.5]{RiWa}, we give an affirmative answer to this question in the case where $A$ is a (dg) symmetric algebra. For general cases, further investigations are needed. \end{rem}

\begin{prop}\label{prop6.9}
The $\star$-product  is graded commutative and associative on $\THH^*(A, A)$.
\end{prop}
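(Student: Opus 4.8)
The plan is to derive both statements on $\THH^*(A,A)$ from the corresponding properties of the cup product on $C_{\sg}^*(A,A)$, using the degreewise injective chain map $\iota^*\colon\calD^*(A,A)\hookrightarrow C_{\sg}^*(A,A)$ of Lemma \ref{lemma6.1} to transport identities back and forth. The first and main step is to show that $\iota^*$ is a morphism of differential graded algebras \emph{up to a homotopy taking values in $\iota^*(\calD^*(A,A))$}: concretely, that there is a degree $-1$ operator $H$ on $\calD^*(A,A)\otimes\calD^*(A,A)$ with
\begin{equation*}
\iota^*(x\star y)-\iota^*(x)\cup\iota^*(y)=\delta\,\iota^*H(x,y)+\iota^*H(\partial x,y)\pm\iota^*H(x,\partial y).
\end{equation*}
On the two ``halves'' of $\calD^*(A,A)$ this is strict, with $H=0$: on $\calD^{\geq0}(A,A)=C^*(A,A)$ the map $\iota^*$ is the canonical inclusion and $\star=\cup$, while on $C_*(A,A^{\vee})$ clause (i) of the $\star$-product is, by construction, exactly the restriction of the cup product of $C_{\sg}^*(A,A)$ along $\iota^*$, as one sees by unwinding the formula for $\cup$ on $C_{\sg}^*(A,A)$ on constant, form-valued cochains. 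The content is concentrated in the mixed degrees, governed by the cap-type formulas (ii)--(iii): there $\iota^*(x\star y)$ differs from $\iota^*(x)\cup\iota^*(y)$ by the correction terms produced by the right action $\blacktriangleleft$ (Lemma \ref{lem0}), i.e.\ by a coboundary, and one records these terms explicitly; the operator $m_3$ written out in the remark preceding this proposition (for $\alpha,\beta\in C_*(A,A^{\vee})$ and a Hochschild cochain of intermediate degree) is one such piece of $H$.

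Granting this, graded commutativity on $\THH^*(A,A)$ follows. By Proposition \ref{prop4.4}, applied on $C_{\sg}^*(A,A)$, one has $u\cup v-(-1)^{|u||v|}v\cup u=\pm\,\delta(u\circ v)$ for cocycles $u,v$, the circle product $\circ$ of Section \ref{section4} serving as contracting homotopy. The auxiliary observation one needs is that $\iota^*(x)\circ\iota^*(y)$ again lies in $\iota^*(\calD^*(A,A))$: this is the classical Gerstenhaber circle product when $x,y\in C^*(A,A)$, and a direct check from the definition of $\circ$ on $C_{\sg}^*(A,A)$ and the description of $\iota^*$ in the remaining cases. For cocycles $\alpha,\beta\in\calD^*(A,A)$, combining the homotopy $H$ of the first paragraph with this identity and with Proposition \ref{prop4.4}, and then applying $(\iota^*)^{-1}$ --- legitimate since $\iota^*$ is injective in each degree --- exhibits $\alpha\star\beta-(-1)^{|\alpha||\beta|}\beta\star\alpha$ as a $\partial$-boundary in $\calD^*(A,A)$.

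Associativity is obtained the same way, now using that $\cup$ is \emph{strictly} associative on $C_{\sg}^*(A,A)$ (Proposition \ref{prop4.2}). For cocycles $\alpha,\beta,\gamma\in\calD^*(A,A)$ the elements $\alpha\star\beta$ and $\beta\star\gamma$ are again cocycles, so one can expand $\iota^*\bigl((\alpha\star\beta)\star\gamma\bigr)$ and $\iota^*\bigl(\alpha\star(\beta\star\gamma)\bigr)$ using the homotopy $H$ and the Leibniz rule for $\cup$ (Lemma \ref{lemma4.1}); since both expansions have the same leading term $\iota^*(\alpha)\cup\iota^*(\beta)\cup\iota^*(\gamma)$ by strict associativity, their difference is $\delta$ of an element of $\iota^*(\calD^*(A,A))$, and applying $(\iota^*)^{-1}$ expresses the associator $(\alpha\star\beta)\star\gamma-\alpha\star(\beta\star\gamma)$ as a $\partial$-boundary. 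Equivalently --- and this is how one actually produces the witnesses --- in each mixed configuration one writes down the degree $-1$ operator $m_3(\alpha,\beta,\gamma)$ with $\partial\,m_3(\alpha,\beta,\gamma)=(\alpha\star\beta)\star\gamma-\alpha\star(\beta\star\gamma)\pm m_3(\partial\alpha,\beta,\gamma)\pm m_3(\alpha,\partial\beta,\gamma)\pm m_3(\alpha,\beta,\partial\gamma)$, the one displayed in the preceding remark being the prototype and the others having the same shape, split according to the relative sizes of the three cohomological degrees.

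The main obstacle is not conceptual but combinatorial: because $\star$ is defined piecewise by the four clauses (i)--(iv), producing the homotopy $H$ (equivalently, the cap-homotopies and the family of operators $m_3$) fragments into a moderate number of cases indexed by the signs and relative sizes of the cohomological degrees involved, and in each mixed case one must determine the correct operator together with all of its signs. Verifying each case is a bounded computation relying on Lemma \ref{lem0} and the defining relation $\sum_j ax_j\otimes y_j=\sum_j x_j\otimes y_ja$ of $A^{\vee}$; the embedding $\iota^*$ together with the Gerstenhaber structure already established on $C_{\sg}^*(A,A)$ (Propositions \ref{prop4.2} and \ref{prop4.4}) serve both to organize the bookkeeping and to guarantee that the sought homotopies exist.
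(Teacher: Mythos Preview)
Your plan has a genuine gap. The argument hinges on the assertion that $\iota^*(x)\circ\iota^*(y)$ lies in $\iota^*(\calD^*(A,A))$, and this is false in the mixed cases. Take $f\in C^m(A,A)$ and $\alpha=(\sum_j x_j\otimes y_j)\otimes s\overline a_{1,p}\in C_p(A,A^{\vee})$. The element $\iota^*(\alpha)\in\Omega_{\nc}^{p+1}(A)$ has no inputs and $p+1$ outputs, so $\iota^*(\alpha)\circ f$ is a sum of terms $\iota^*(\alpha)\circ_{-i}f$ inserting $\overline f$ into the $i$-th output slot; the resulting element of $C^{m-p-2}(A,\Omega_{\nc}^{p+1}(A))$ sends $s\overline b_{1,m-1}$ to expressions like $x_j\otimes s\overline a_{1,i-1}\otimes\overline f(s\overline a_{i,p}\otimes s\overline y_j\otimes s\overline b_{1,\bullet})\otimes\cdots$, which are neither of the form $h\otimes\id_{s\overline A}^{\otimes(p+1)}$ for $h\in C^{m-p-2}(A,A)$ (when $m-p-2\geq0$) nor constant cochains arising from $\iota^{m-p-2}$ (when $m-p-2<0$). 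The same failure occurs for the terms $f\circ_i\iota^*(\alpha)$ with $i>m-p-1$. So you cannot apply $(\iota^*)^{-1}$ to the Gerstenhaber homotopy and obtain a $\partial$-boundary inside $\calD^*(A,A)$.

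More broadly, Proposition~\ref{prop6.9} is stated for an arbitrary associative algebra $A$, where $\iota^*$ is merely injective and not a quasi-isomorphism (that only holds in the self-injective case, Proposition~\ref{prop6.4}). Injectivity at the cochain level does not let you pull back cohomological identities: a $\delta$-coboundary in $C_{\sg}^*(A,A)$ whose primitive is not in the image of $\iota^*$ need not be a $\partial$-coboundary in $\calD^*(A,A)$. Your argument would at best prove the statement for self-injective $A$, and even then only after showing that $\widetilde\iota^*$ is multiplicative on cohomology---which the paper deduces \emph{from} Proposition~\ref{prop6.9}, not the other way around.

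The paper's proof is instead entirely internal to $\calD^*(A,A)$: for each of the three mixed-degree configurations it writes down an explicit degree~$-1$ operator $\bullet$ (e.g.\ $f\bullet\alpha=\sum_i\sum_k(-1)^{(m-1)k}(x_i\otimes y_i)\otimes s\overline a_{1,k-1}\otimes f(s\overline a_{k,k+m-1})\otimes s\overline a_{k+m,p}$ for $p\geq m-1$) and checks by a short direct computation that $\partial(\bullet)$ produces the graded commutator plus boundary terms. For associativity it then uses the just-proved commutativity on $\THH^*$ to reduce to the case where the first two factors are both in $\calD^{\geq0}$ or both in $\calD^{<0}$, where strict associativity on the nose is immediate. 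No reference to $C_{\sg}^*(A,A)$ is needed.
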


\begin{proof} Let us first verify the graded commutativity in the following cases.
\begin{enumerate}
\item For $\alpha\in C_p(A, A^{\vee})$ and $\beta\in C_q(A, A^{\vee})$,  denote
\begin{eqnarray*}
\beta\bullet\alpha:=\sum_{i, j} \sum_{k=1}^{p+1}(-1)^{qk} (x_i\otimes y_i)\otimes s\overline{a}_{1,k-1} \otimes s\overline{x_j'}\otimes s\overline{b}_{1, q}\otimes  s\overline{y_j'}\otimes s\overline{a}_{k, p}.\end{eqnarray*} Then we have
\begin{eqnarray}\label{equation-star} \lefteqn{\partial(\beta\bullet \alpha)}\\
 &=& (-1)^q\beta\star \alpha+\sum_{i, j} \sum_{k=1}^{p+1}\sum_{l=0}^{k-2}(-1)^{qk+l} (x_i\otimes y_i)\otimes s\overline{a}_{1, l-1}\otimes s\overline{a_la_{l+1}}\nonumber \\
&& \otimes  s\overline{a}_{l+2, k-1}\otimes s\overline{x_j'}\otimes s\overline{b}_{1, q}\otimes s\overline{y_j'}\otimes  s\overline{a}_{k, p}+\sum_{i, j} \sum_{k=1}^{p+1}\sum_{l=k}^{p}(-1)^{q(k-1)+l}\nonumber \\
&&(x_i\otimes y_i)\otimes s\overline{a}_{1, k-1}\otimes s\overline{x_j'}\otimes s\overline{b}_{1, q}\otimes s\overline{y_j'}\otimes s\overline{a}_{k, l-1}\otimes  s\overline{a_la_{l+1}}\otimes   s\overline{a}_{l+2, p}\nonumber\\
&&  -(-1)^{(p+1)(q+1)+q} \alpha\star \beta+\partial(\beta)\bullet \alpha\nonumber\\
&=&(-1)^q(\beta\star \alpha-(-1)^{(p+1)(q+1)} \alpha \star \beta)+(-1)^{q}\beta\bullet \partial(\alpha)+\partial(\beta)\bullet\alpha. \nonumber
\end{eqnarray}
Thus on $\THH^*(A, A)$, we have $\beta\star \alpha-(-1)^{(p-1)(q-1)}  \alpha\star \beta=0.$
\item For $m, p\in\Z_{\geq 0}$ such that $p\geq m-1$, denote
\begin{eqnarray*}
f\bullet \alpha:=\sum_{i}\sum_{k=1}^{p-m+1}(-1)^{(m-1)k}(x_i\otimes y_i)\otimes s\overline{a}_{1, k-1}\otimes f(s\overline{a}_{k, k+m-1})\otimes s\overline{a}_{k+m, p}.
\end{eqnarray*}
Then we have
\begin{eqnarray*}
\partial(f\bullet \alpha)
=(-1)^{m-1}(f\star\alpha-(-1)^{m(p-1)}\alpha\star f)+\partial(f)\bullet \alpha+(-1)^{m-1} f\bullet \partial(\alpha).\end{eqnarray*}
Thus on $\THH^*(A, A)$, we have  $f\star\alpha -(-1)^{m(p-1)}\alpha\star f=0.$
\item For $m, p\in \Z_{\geq 0}$ such that $p\leq m$, denote
\begin{eqnarray*}(f\bullet \alpha)(s\overline{b}_{1, m-p}):=\sum_i \sum_{k=1}^{m-p+1}(-1)^{pk+m-1}f(s\overline{b}_{1, k-1}\otimes s\overline{x_i}\otimes s\overline{a}_{1, p}\otimes  s\overline{y_i}\otimes s\overline{b}_{k, m-p}).
\end{eqnarray*}
By a similar computation, we have $f\star\alpha-(-1)^{(p-1)m} \alpha\star f=0$ on $\THH^*(A, A)$. 
\end{enumerate}
It remains to verify the associativity. Since $\star$ is graded commutative on $\THH^*(A, A)$, it is enough to verify  $(x\star y)\star z=x\star (y\star z)$ for $x, y\in H^*(\calD^{<0}(A, A))$ and for $x, y\in H^*(\calD^{\geq 0}(A, A))$. But for these two cases, from a direct computation it follows that the above identity already holds on $\calD^*(A, A)$. This proves the proposition.
\end{proof}

\begin{cor}
 $\widetilde{\iota}^{*}: \THH^*(A, A)\rightarrow \HH_{\sg}^*(A, A)$ is a morphism of graded algebras.  
\end{cor}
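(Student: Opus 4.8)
The plan is to establish multiplicativity at the chain level, up to coboundary. By Lemma \ref{lemma6.1} the map $\iota^*\colon\calD^*(A,A)\hookrightarrow C_{\sg}^*(A,A)$ is a morphism of complexes, so it induces $\widetilde{\iota}^{*}$ on cohomology; the $\star$-product descends to $\THH^*(A,A)$ by the lemma preceding this corollary, and the cup product $\cup$ makes $C_{\sg}^*(A,A)$ a dg algebra (Proposition \ref{prop4.2}), hence descends to $\HH_{\sg}^*(A,A)$. It therefore suffices to show that for any two cocycles $x,y$ of $\calD^*(A,A)$ the elements $\iota^*(x\star y)$ and $\iota^*(x)\cup\iota^*(y)$ of $C_{\sg}^*(A,A)$ are cohomologous, together with unitality: $\widetilde{\iota}^{*}$ sends the class of $1\in A=\calD^0(A,A)$ to the unit of $\HH_{\sg}^*(A,A)$, which is clear because $\iota^0$ is the inclusion $C^0(A,A)\subset C_{\sg}^0(A,A)$ and $1\in C^0(A,A)$ represents the unit of the cup product.

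First I would split the verification according to the signs of $|x|$ and $|y|$. If $x,y\in\calD^{\geq 0}(A,A)=C^{\geq 0}(A,A)$ there is nothing to do: by part (iv) of the definition of $\star$ we have $x\star y=x\cup y$, the map $\iota^*$ is the inclusion, and the cup product of Section \ref{section4} restricts on $C^*(A,A)$ to the classical Hochschild cup product. If $x=\alpha\in C_p(A,A^{\vee})$ and $y=\beta\in C_q(A,A^{\vee})$, I would unravel $\iota^*(\alpha)\in\Omega_{\nc}^{p+1}(A)\subset C_{\sg}^{-p-1}(A,A)$ and $\iota^*(\beta)\in\Omega_{\nc}^{q+1}(A)\subset C_{\sg}^{-q-1}(A,A)$ and evaluate the cup product \eqref{equation-cup-product}; comparing with part (i) of the definition of $\star$ one sees that $\iota^*(\alpha\star\beta)=\iota^*(\alpha)\cup\iota^*(\beta)$ already holds on the nose once the Koszul signs are bookkept --- indeed both sides equal $\sum_{i,j}(x'_jx_i)\otimes s\overline{a}_{1,p}\otimes s\overline{y_i}\otimes s\overline{b}_{1,q}\otimes s\overline{y'_j}$ in $\Omega_{\nc}^{p+q+2}(A)$.

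The remaining, and principal, case is the mixed one: $f\in C^m(A,A)$ and $\alpha\in C_p(A,A^{\vee})$, in the two sub-cases $p\geq m$ and $p<m$, and likewise $\alpha\star f$. Here one cannot hope for on-the-nose multiplicativity: $\iota^*$ is injective, so that would transport the associativity of $\cup$ to $\star$, whereas $\star$ is not associative in general (cf. the remark before Proposition \ref{prop6.9}). Instead I would produce an explicit bounding element, obtained from $\iota^*(f)$ and $\iota^*(\alpha)$ by the same ``spread the cochain across the letters'' pattern that yields the chain homotopies $f\bullet\alpha$ used in the proof of Proposition \ref{prop6.9}; equivalently, one can take the brace-type element coming from $\cup^{\op}=\mu\{-,-\}$ (Remark \ref{rem5.5}) and the higher-homotopy identity \eqref{equation-B3} for the $B_{\infty}$-structure on $C_{\sg}^*(A,A)$. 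One then checks, case by case, that $\iota^*(f\star\alpha)-\iota^*(f)\cup\iota^*(\alpha)$ equals $\pm\delta$ of this element, the main tool being Lemma \ref{lem0}, which is exactly what reconciles the $\blacktriangleleft$-terms appearing in $\delta$ on $C_{\sg}^*(A,A)$ with the Hochschild-chain differential $b$ inside $\calD^*(A,A)$. Since $\cup$ is graded commutative on $\HH_{\sg}^*(A,A)$ (Corollary \ref{cor-graded-algebra}) and $\star$ is graded commutative on $\THH^*(A,A)$ (Proposition \ref{prop6.9}), it is enough to treat one order, say $f\star\alpha$. Putting the three cases together with unitality gives that $\widetilde{\iota}^{*}$ is a morphism of graded algebras.

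The hard part will be the mixed case: writing down the correct homotopy and, especially, carrying the signs correctly through the $\blacktriangleleft$-terms; the $\calD^{\geq 0}$ and $\calD^{<0}$ cases and the reduction are formal or short. I expect the computation there to be essentially the one already packaged in the proof of Proposition \ref{prop6.9}, since the $\star$-product was defined precisely so as to be $\iota^*$-compatible with $\cup$.
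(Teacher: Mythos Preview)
Your plan is sound and, in fact, more careful than the paper's own argument. The paper's proof is a single sentence: ``Observe that $\iota^*$ is compatible with the products $\star$ and $\cup$ at the cochain level. Thus the result follows from Proposition~\ref{prop6.9}.'' Your observation that strict chain-level multiplicativity cannot hold everywhere---because $\iota^*$ is injective, $\cup$ is associative on $C_{\sg}^*(A,A)$, yet $\star$ is \emph{not} associative on $\calD^*(A,A)$---is exactly right, and it shows that the paper's ``observe'' is too quick in the mixed cases. So your decision to produce an explicit homotopy there is the honest thing to do.

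Two remarks that will shorten your work. First, in the mixed sub-case $f\star\alpha$ with $p\geq m$ (i.e.\ part (ii) of the definition of $\star$), strict compatibility \emph{does} hold: unwinding the cup product formula with the representative $f\otimes\id_{s\overline{A}}^{\,p+1-m}$ for $f$ gives precisely $\iota^*(f\star\alpha)$ after applying the appropriate power of $\theta$. Combined with your graded-commutativity reduction (Proposition~\ref{prop6.9} on one side, Corollary~\ref{cor-graded-algebra} on the other), this already disposes of both orders when $p\geq m$, without any homotopy. Second, this leaves only the sub-case $p<m$ (part (iii) of $\star$), and there the homotopy you need is essentially the element $f\bullet\alpha$ from case~(3) of the proof of Proposition~\ref{prop6.9}, transported through $\iota^*$; so your expectation that the computation is ``already packaged'' there is correct.

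In short: your route and the paper's are the same in spirit (case analysis plus commutativity), but you have correctly identified a gap in the paper's one-line claim and supplied the right mechanism to close it.
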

\begin{proof}
Observe that $\iota^*$ is compatible with the products $\star$ and $\cup$ at the cochain level. Thus
the result follows from Proposition \ref{prop6.9}. 
\end{proof}
\begin{rem}
In general, the morphism  $\widetilde{\iota}^{*}$ is not an isomorphism. For instance, consider the radical square zero algebra $A=kQ/\langle Q_2\rangle$ of the quiver $Q$ with only one vertex and two loops.  We prove in \cite[Section 5]{Wan} that  $\HH_{\sg}^*(A, A)$   is of infinite dimension in each degree,  while  $\THH^*(A, A)$ is of finite dimension in each degree.  Nevertheless, in the following section, we will prove  that $\widetilde{\iota}^*$ is an isomorphism if $A$ is a self-injective algebra.  \end{rem}

\subsection{The case of self-injective algebras}
In this section, we fix a finite dimensional self-injective algebra $A$ over a field $k$. Recall that $A$ is {\it self-injective} if $A$  itself is injective as a left (or equivalently, right) $A$-module. Clearly,  symmetric algebras are naturally self-injective. Self-injective algebras play an important role in representation theory, mainly due to the fact that their stable module categories have a natural triangulated structure (cf. e.g. \cite[Section 5.1.4]{Zim}).  Moreover, we have the following result.
\begin{thm}[{\cite[Theorem2.1]{Ric}}]\label{thm-ric1}
Let $A$ be a self-injective algebra. Then the canonical functor $F_A: \mbox{$A$-$\underline{\modu}$}\rightarrow \DD_{\sg}(A)$ is an equivalence between triangulated categories.
\end{thm}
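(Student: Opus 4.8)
The plan is to reduce the statement to Buchweitz's theorem recalled earlier in the paper, namely that for any Gorenstein algebra $B$ the singularity category $\DD_{\sg}(B)$ is triangle equivalent to the stable category $\underline{\MCM}(B)$ of maximal Cohen--Macaulay modules. First I would check that a finite-dimensional self-injective algebra $A$ is Gorenstein in the required sense: $A$ is injective as a module over itself on either side, so $\mathrm{injdim}_A A = \mathrm{injdim}_{A^{\op}} A = 0 < \infty$.

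The key step is to show that over a self-injective algebra every finitely generated module is maximal Cohen--Macaulay (equivalently, Gorenstein-projective). Given $M$, I would splice a projective resolution $\cdots \to P_1 \to P_0 \to M \to 0$ with an injective coresolution $0 \to M \to I^0 \to I^1 \to \cdots$; since injective and projective $A$-modules coincide, the resulting complex $\cdots \to P_1 \to P_0 \to I^0 \to I^1 \to \cdots$ is a totally acyclic complex of projectives having $M$ as a cocycle, and applying $\Hom_A(-,Q)$ for a projective (hence injective) $Q$ preserves acyclicity. Thus $\MCM(A) = A\text{-}\modu$ as Frobenius exact categories, so $\underline{\MCM}(A) = \mbox{$A$-$\underline{\modu}$}$ as triangulated categories, the triangulated structure on both being the standard one induced by the Frobenius structure (loop functor $\Omega$, with inverse the cosyzygy). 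It then remains to note that Buchweitz's equivalence $\underline{\MCM}(A)\xrightarrow{\sim}\DD_{\sg}(A)$ is, on objects, exactly ``regard an $\MCM$ module as a stalk complex in degree zero'', hence is naturally isomorphic to the canonical functor $F_A$; this yields the theorem.

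For a proof not appealing to Buchweitz, I would instead argue directly: $F_A$ is a triangle functor because short exact sequences of modules induce triangles in $\DD^b(A)$ hence in $\DD_{\sg}(A)$, and the cosyzygy sequence $0\to N\to I\to \Omega^{-1}N\to 0$ with $I$ projective-injective identifies the shift of $\mbox{$A$-$\underline{\modu}$}$ with $[1]$. It is essentially surjective because any bounded complex can be folded down to a module in $\DD_{\sg}(A)$: a brutal truncation of a projective resolution is a perfect complex, so an arbitrary complex becomes a shifted syzygy module, and a brutal truncation of an injective coresolution is again a bounded complex of projectives (self-injectivity!), which trades a shifted module for an honest one. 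The heart is full faithfulness: compute $\Hom_{\DD_{\sg}(A)}(M,N)$ as $\varinjlim \Hom_{\DD^b(A)}(M',N)$ over maps $M'\to M$ with perfect cone, show the maps $M\to \Omega^n M[n]$ obtained from truncated projective resolutions are cofinal, and thereby identify this with $\varinjlim_n \Hom_A(\Omega^n M,\Omega^n N)\cong \varinjlim_n \underline{\Hom}_A(\Omega^n M,\Omega^n N)\cong \underline{\Hom}_A(M,N)$, the last step using that $\Omega$ is an autoequivalence of $\mbox{$A$-$\underline{\modu}$}$ for self-injective $A$. I expect the main obstacle in this direct route to be the cofinality and transition-map bookkeeping inside the Verdier-quotient colimit, in particular verifying that a map of syzygies which factors through a projective becomes genuinely zero after finitely many further applications of $\Omega$; the Buchweitz route sidesteps this at the cost of importing the Gorenstein-projective machinery.
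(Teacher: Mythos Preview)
The paper does not prove this theorem at all: it is stated with attribution to \cite[Theorem~2.1]{Ric} and used as a black box (to derive the isomorphism~(\ref{equation-ric2}) and then Proposition~\ref{prop6.4}). There is therefore no ``paper's own proof'' to compare against.

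Your proposal is mathematically sound. The Buchweitz route is clean and correct: a finite-dimensional self-injective algebra has $\mathrm{injdim}_AA=0$, every finitely generated module is Gorenstein-projective by the splicing argument you give, hence $\underline{\MCM}(A)=A\text{-}\underline{\modu}$, and Buchweitz's equivalence is indeed the canonical functor $F_A$ on objects. The direct route you sketch is essentially Rickard's original argument; your identification of the cofinality bookkeeping in the Verdier-quotient colimit as the delicate point is accurate. One small remark on your last parenthetical worry: a map of syzygies factoring through a projective is already zero in $\underline{\Hom}$, so no further applications of $\Omega$ are needed there; the real content is showing that the system $\{\Omega^nM[n]\to M\}$ is cofinal among maps with perfect cone, which follows because every perfect complex is a bounded complex of projectives and hence any $M'\to M$ with perfect cone is dominated by a truncated projective resolution.
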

Since $A$ is self-injective, so is $A\otimes A^{\op}$.  Thus from Theorem \ref{thm-ric1}, it follows that there is an equivalence $F_{A\otimes A^{\op}}: \mbox{($A\otimes A^{\op})$-$\underline{\modu}$}\rightarrow \DD_{\sg}(A\otimes A^{\op})$ of triangulated categories.  In particular, it induces an isomorphism \begin{equation}\label{equation-ric2} \underline{\Hom}_{A\otimes A^{\op}}(A, \Omega_{\sy}^p(A)) \xrightarrow{\cong} \underline{\Ext}^p_{A\otimes A^{\op}}(A, A).\end{equation} Based on this isomorphism, we prove the following result.
\begin{prop}\label{prop6.4}
The embedding $\iota^*: \calD^*(A, A)\hookrightarrow C_{\sg}^*(A,A)$ is a quasi-isomorphism.
\end{prop}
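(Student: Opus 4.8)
The plan is to compare the two complexes degree-by-degree with the colimit description of $C_{\sg}^*(A,A)$ in hand, and to leverage self-injectivity to collapse the colimit to a single term. Recall from Definition \ref{defin3.2} that $C_{\sg}^*(A,A)=\colim_{\theta_p}C^*(A,\Omega_{\nc}^p(A))$, and that taking cohomology commutes with this filtered colimit, so $\HH_{\sg}^m(A,A)\cong\colim_{H^m(\theta_p)}\HH^m(A,\Omega_{\nc}^p(A))$ (this is exactly equation (\ref{equation1})). The first step is to identify, for each $m$, where the colimit stabilizes. The crucial input is that $A\otimes A^{\op}$ is self-injective, hence $A\otimes A^{\op}$-$\underline{\modu}$ is a triangulated (not merely left-triangulated) category with the syzygy functor $\Omega_{\sy}^1$ an \emph{equivalence}. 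Consequently the maps $\theta_p'$ appearing in (\ref{equation2}) are already isomorphisms, and via Theorem \ref{thm1} (or directly via the isomorphism (\ref{equation-ric2})) one gets that $H^m(\theta_p)\colon\HH^m(A,\Omega_{\nc}^p(A))\to\HH^m(A,\Omega_{\nc}^{p+1}(A))$ is an isomorphism for all $p$ large enough relative to $|m|$ — concretely once $p+m\geq 0$ and in fact stably. Thus $\HH_{\sg}^m(A,A)\cong \underline{\Hom}_{A\otimes A^{\op}}(A,\Omega_{\sy}^{p}(A))$ for suitable $p$.

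The second step is to compute $\THH^m(A,A)$, the cohomology of $\calD^*(A,A)$, in the same terms. For $m\geq 0$ the complex $\calD^*$ agrees with $C^*(A,A)$ in the relevant range only up to the attaching map $\mu\colon A^{\vee}\to A$ at the junction; for $m<0$ one must understand the Hochschild \emph{chain} complex $C_*(A,A^{\vee})$ with $A^{\vee}=\Hom_{A\otimes A^{\op}}(A,A\otimes A^{\op})$. Here the key observation is that since $A$ is self-injective (hence $A\otimes A^{\op}$ is a symmetric-like situation, or at least self-injective), $A^{\vee}$ is a \emph{projective-injective} $A\otimes A^{\op}$-module, in fact $A^{\vee}\cong A\otimes A^{\op}$ twisted by the Nakayama automorphism; more precisely $A^{\vee}\cong D(A)$ as a bimodule up to the Nakayama twist, and for $A$ self-injective this is an injective bimodule. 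Therefore $C_*(A,A^{\vee})=A^{\vee}\otimes_{A\otimes A^{\op}}\Barr_*(A)$ computes $\Tor_*^{A\otimes A^{\op}}(A^{\vee},A)$, and one can splice this negative-degree Tor with the positive-degree Ext. The plan is to show that the whole complex $\calD^*(A,A)$ is exactly a "complete resolution"-type complex: it is obtained by gluing the bar resolution (computing $\HH^{\geq 0}$) to its $A^{\vee}$-dual (computing $\HH_{<0}$) along $\mu$, and that this glued complex computes precisely $\underline{\Ext}^*_{A\otimes A^{\op}}(A,A)$ by the standard Tate-cohomology mechanism (this is where "inspired by the original definition of Tate cohomology for finite groups" in the introduction gets cashed out).

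The third step is to check that the embedding $\iota^*$ of Lemma \ref{lemma6.1} realizes the comparison isomorphism, i.e. that the induced map $\THH^m(A,A)\to\HH_{\sg}^m(A,A)$ coincides, under the two computations above, with the identity on $\underline{\Ext}^m_{A\otimes A^{\op}}(A,A)$. For $m\geq 0$ this is immediate since $\iota^i$ is literally the inclusion $C^i(A,A)\hookrightarrow C^i_{\sg}(A,A)$. For $m<0$ one traces through the explicit formula $\iota^i(\alpha)=\sum_j x_j\otimes\overline{a}_{1,-i-1}\otimes\overline{y_j}\in\Omega_{\nc}^{-i}(A)$ and checks it represents the correct class in the colimit; the compatibility with differentials was already verified in Lemma \ref{lemma6.1}, so what remains is injectivity and surjectivity on cohomology, which follows once both sides are identified with $\underline{\Hom}_{A\otimes A^{\op}}(A,\Omega_{\sy}^p(A))$ compatibly.

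The main obstacle I expect is the negative-degree identification: showing that $H_{-i-1}(C_*(A,A^{\vee}))$ — glued appropriately — reproduces the negative Tate-Hochschild groups, which requires knowing that $A^{\vee}$ is the "correct" dualizing bimodule (an injective/projective $A\otimes A^{\op}$-module) and that the bar chain complex $C_*(A,A^{\vee})$ together with $\mu$ forms an acyclic "co-resolution" patching onto $\Barr_*(A)$. Equivalently, one must verify that the natural map $C_*(A,A^{\vee})\to$ (a shift of) $\Hom_{A\otimes A^{\op}}(\Barr_*(A),A^{\vee}\otimes_{A\otimes A^{\op}}-)$-type complex is a quasi-isomorphism, using self-injectivity to convert the contravariant dual of a projective resolution into a genuine injective co-resolution. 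Once this duality bookkeeping is set up correctly, the rest is the routine colimit-stabilization argument of Step 1 combined with the five lemma at the junction degrees $m=-1,0$.
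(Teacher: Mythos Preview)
Your overall plan---handle $m>0$ via stabilization of the colimit (Rickard's theorem making $\Omega_{\sy}^1$ an equivalence on $A^e\text{-}\underline{\modu}$, so the maps $\theta_p'$ in (\ref{equation2}) are isomorphisms) and treat $m\leq 0$ separately---matches the paper's structure, and your positive-degree argument is fine.

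There is, however, a genuine gap in Step~2. The claim that $A^{\vee}=\Hom_{A\otimes A^{\op}}(A,A\otimes A^{\op})$ is a projective (equivalently injective) $A\otimes A^{\op}$-module is false for a general self-injective $A$: if it were, $A$ would have finite projective dimension over $A^e$, i.e.\ $A$ would be separable. The asserted isomorphism ``$A^{\vee}\cong A\otimes A^{\op}$ twisted by Nakayama'' already fails on dimensions (e.g.\ for $A=k[x]/(x^2)$ one has $\dim_k A^{\vee}=2\neq 4=\dim_k A^e$). Consequently the ``complete resolution via dualizing bimodule'' mechanism you outline, and the duality bookkeeping you flag as the main obstacle, cannot be carried out along those lines.

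The paper's route for $m\leq 0$ is simpler and of a different nature; it uses no projectivity of $A^{\vee}$ at all. The key is the elementary identification
\[
C_{n}(A,A^{\vee})=A^{\vee}\otimes_k (s\overline{A})^{\otimes n}\;\cong\;\Hom_{A^e}\bigl(A,\,A\otimes (s\overline{A})^{\otimes n}\otimes A\bigr)=\HH^{0}(A,\Barr_{n}(A)),
\]
valid simply because $(s\overline{A})^{\otimes n}$ is a $k$-vector space and $A$ is finite-dimensional, so the tensor factor pulls through the $\Hom$. Under this identification the Hochschild boundary $b$ becomes $\HH^0(A,d)$, so the non-positive half of $\calD^*(A,A)$ is literally the complex $\Hom_{A^e}(A,\Barr_*(A))$ (covariant $\Hom$ applied to the augmented bar resolution). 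Its homology at spot $n$ is $\underline{\Hom}_{A^e}(A,\Omega_{\sy}^{n+1}(A))$ by a two-line computation using only left-exactness of $\Hom$ and that $\Barr_{n+1}\twoheadrightarrow\Omega_{\sy}^{n+1}$ is a surjection from a projective. Combined with (\ref{equation-ric2}) and Theorem~\ref{thm1} this yields $\THH^m(A,A)\cong\HH_{\sg}^m(A,A)$ for all $m\leq 0$ uniformly, and one checks $\iota^*$ realizes it. No complete resolutions, no special properties of $A^{\vee}$ as a bimodule, and no separate five-lemma argument at $m=-1,0$ are needed.
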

\begin{proof}
First we note that $\iota^p$, for $p\in \Z_{>0}$, induces an isomorphism at the cohomology level from Theorem \ref{thm-ric1} and the proof of Theorem \ref{thm1}. Let us  prove that  this also holds for $p\in \Z_{\leq 0}$. Indeed, we note that  $D^{p}(A, A)\cong \HH^0(A, \Barr_{-p-1}(A))$ and  the differential $\partial^{p}: D^{p}(A,A)\rightarrow D^{p+1}(A, A)$ coincides with the differential $d_{-p-1}':=\HH^0(A, d_{-p-1}):\HH^0(A, \Barr_{-p-1}(A))\rightarrow \HH^0(A, \Barr_{-p-2}(A))$. Hence we have the following commutative diagram,
\begin{equation}\label{equation6.1}
\xymatrix@C=1.5pc{
\cdots \ar[r]& D^p(A, A)\ar[d]^{\cong} \ar[r]^-{\partial^p} & D^{p+1}(A) \ar[r] \ar[d]^{\cong}& \cdots \ar[r] & D^{-1}(A, A) \ar[r] \ar[d]^{\cong}& \HH^0(A, A)\ar[d]^{=}\\
\cdots\ar[r] & \HH^0(\Barr_{-p-1}) \ar[r]^-{d_{-p-1}'} &\HH^0(\Barr_{-p-2}) \ar[r]& \cdots \ar[r]& \HH^0(\Barr_0)\ar[r]& \HH^0(A, A)
}
\end{equation}
where for simplicity we write $\HH^0(A, \Barr_{-p}(A))$ as $\HH^0(\Barr_{-p})$.  Observe that the $p$-th cohomology of the lower complex  is isomorphic to  $\underline{\Hom}_{A\otimes A^{\op}}(A, \Omega^{-p}_{\sy}(A))$. Thus we have an isomorphism between $H^{-p}(\calD^*(A, A))$ and $\underline{\Hom}_{A\otimes A^{\op}}(A, \Omega^{-p}_{\sy}(A))$. 
Therefore, from (\ref{equation-ric2}) and  Theorem \ref{thm1},  $\iota^p$ induces an isomorphism in cohomology.  This proves the proposition.
\end{proof}

\begin{rem}
This proposition shows that  $\HH_{\sg}^*(A, A)$ can be computed by  $\calD^*(A,A)$ if $A$ is a self-injective algebra. Thus we have
\begin{equation*}
\HH^i_{\sg}(A, A)\cong
\begin{cases}
\HH^i(A, A) & \mbox{for $i>0$}, \\
\HH_{-i-1}(A, A^{\vee}) & \mbox{for $i<-1$}
\end{cases}
\end{equation*}
and for $i=-1, 0$, we have an exact sequence,
$$0\rightarrow \HH_{\sg}^{-1}(A, A) \rightarrow A^{\vee}\otimes_{A\otimes A^{\op}}A \xrightarrow{\tau} \HH^0(A, A)\rightarrow \HH_{\sg}^0(A, A)\rightarrow 0.$$
 In fact, this result is a special case of \cite[Corollary 6.4.1]{Buc} since self-injective algebras are naturally Gorenstein. Hence the quasi-isomorphism $\iota^*$ is viewed as a lifting of Buchweitz's result to the cochain level.
\end{rem}

\subsection{The case of symmetric algebras}
From now on, we fix a symmetric algebra $(A, \langle\cdot, \cdot\rangle)$ over a field $k$. Recall that   there is a natural isomorphism $A\xrightarrow{\cong} A^{\vee}, x\mapsto \sum_{\lambda}\el x\otimes \eu $ of $A$-$A$-bimodules with inverse $A^{\vee}\rightarrow A, \sum_i x_i\otimes y_i \mapsto \sum_i\langle y_i, 1\rangle x_i$, where  $\{\el \}$ is a basis of $A$ and $\{\eu \}$ is its dual basis with respect to the pairing $\langle\cdot, \cdot \rangle$ (cf. \cite{Bro}).  Under this isomorphism,  $\calD^*(A, A)$ is naturally isomorphic to the following complex (still denoted by $\calD^*(A, A)$),
$$\calD^*(A, A):= (\cdots \xrightarrow{b_2} C_1(A, A)\xrightarrow{b_1} C_0(A, A) \xrightarrow{\tau} C^0(A, A)\xrightarrow{\delta^0} C^1(A, A) \xrightarrow{\delta^1}\cdots),$$
where $\tau(x):=\sum_{\lambda} \el x\eu$. 
One may easily write down  the star product $\star$ (cf. Section \ref{section-star2}) on the new complex $\calD^*(A, A)$.

\subsubsection{Lie bracket on $\calD^*(A, A)$}
 Recall that there is a non-degenerate pairing $\langle\cdot, \cdot\rangle$ on $\calD^*(A, A)$ (cf. (\ref{equation-pairing})),
$$\langle f, \alpha\rangle=\langle \alpha, f\rangle=\begin{cases}
\langle f(s\overline{a}_{1, m}), a_0\rangle & \mbox{if $m=n$}, \\
0  & \mbox{Otherwise,}
\end{cases}$$
where $f\in C^m(A, A)$ and $\alpha:=a_0\otimes s\overline{a}_{1, n}\in C_n(A, A)$.

\begin{lem}\label{lem6.11}
\begin{enumerate}[(i)]
\item  For $x, y\in \calD^*(A, A)$, we have
$\langle \partial(x), y\rangle=(-1)^{|x|-1}\langle x, \partial(y)\rangle$.
 \item For $x, y, z\in \calD^*(A,A)$, we have $\langle x\star y, z\rangle=\langle x, y\star z\rangle.$
\end{enumerate}
\end{lem}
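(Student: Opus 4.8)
The plan is to prove both identities by reducing everything to the two classical compatibilities that we already know: the duality between Connes' $B$-operator (together with the Hochschild differential $b$) and the Hochschild cochain differential $\delta$ with respect to the pairing $\langle\cdot,\cdot\rangle$ of (\ref{equation-pairing}), and the adjunction between the cup product on $C^*(A,A)$ and the cap product on $C_*(A,A)$. Recall that $\calD^*(A,A)$ is built from three pieces: the Hochschild cochains $C^{\geq 0}(A,A)$ in non-negative degrees, the connecting map $\tau(x)=\sum_\lambda \el x\eu$ in the middle, and the Hochschild chains $C_{>0}(A,A)$ in negative degrees. So for each of the two identities I would split the verification into cases according to where the arguments $x,y,z$ sit, and in each case the identity will follow either from a known fact about $C^*(A,A)$ alone, a known fact about $C_*(A,A)$ alone, or a short direct computation involving $\tau$ and the symmetrizing basis $\{\el\},\{\eu\}$.

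For part (i), $\langle\partial(x),y\rangle=(-1)^{|x|-1}\langle x,\partial(y)\rangle$: if both $x,y$ live in non-negative degrees this is the statement that $\delta$ is (graded) self-adjoint for the pairing restricted to $C^*(A,A)$, which is a standard computation (and is exactly the compatibility underlying Theorem \ref{thm-bv2}). If both live in negative degrees it is the dual statement that $b$ is self-adjoint, again standard. The only genuinely new cases are the ones straddling degree $0$: $x\in C_1(A,A)$ against $y\in C^0(A,A)=A$ (involving $b_1$ on one side and $\tau$ on the other), and $x\in C_0(A,A)=A$ against $y\in C^1(A,A)$ (involving $\tau$ and $\delta^0$). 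These reduce to the identity $\langle \sum_\lambda \el x\eu, f\rangle = \langle x, \text{(something built from }f)\rangle$ which one checks using associativity and symmetry of $\langle\cdot,\cdot\rangle$ and the defining property $\sum_\lambda \el a\eu b = \sum_\lambda \el b a\eu$ of the Casimir-type element; these are a handful of lines each. Because the pairing is non-degenerate in each degree (Lemma \ref{lemma1}-type identifications are not needed here), checking it on the defining generators suffices.

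For part (ii), $\langle x\star y,z\rangle=\langle x,y\star z\rangle$, the natural organizing principle is that the pairing identifies $C_*(A,A)$ with the graded dual of $C^*(A,A)$ and that $\star$ is designed precisely so that cup product and cap product become adjoint. Concretely I would again go through the cases of Section \ref{section-star2}: when all three arguments are cochains this is the associativity-plus-adjointness of $\cup$; when the pattern is cochain-cochain-chain or cochain-chain-chain it is the standard identity $\langle f\cup g,\alpha\rangle=\langle f, g\cap\alpha\rangle$ (equivalently $\langle g, \alpha\cap f\rangle$) unwound through the explicit formulas for the cap products $\star$ in cases (ii) and (iii); and the remaining mixed cases reduce to these by the graded commutativity of $\star$ on $\THH^*(A,A)$ established in Proposition \ref{prop6.9}, or else are a short direct check. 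In each case one substitutes the formulas from (i)--(iv) of Section \ref{section-star2} into $\langle\cdot,\cdot\rangle$, uses $\langle a_0, f(s\overline a_{1,m})\rangle$ as the definition, and matches terms; the signs all come from the Koszul rule and have already been fixed by the conventions in the cup/cap formulas.

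The main obstacle I anticipate is purely bookkeeping: matching the signs across the degree-zero wall, where $\tau$ mediates between chains and cochains and the grading conventions (chains in negative degrees, the shift $s$, the factor $\sum_\lambda\el(-)\eu$) all interact. In particular one must be careful that the $(-1)^{|x|-1}$ in part (i) is consistent with the sign conventions in the differentials $b$, $\tau$, $\delta$ as written in (\ref{equation-Tate-complex}) — this is where a naive computation is most likely to be off by a sign, and it is worth isolating the two boundary cases and doing them slowly rather than trying to treat all cases by one uniform formula. Once the boundary cases are pinned down, the interior cases are immediate from the classical facts about $C^*(A,A)$ and $C_*(A,A)$, so the proof can honestly be summarized as ``straightforward case-by-case computation'', with the caveat that the symmetry hypothesis on $A$ is used exactly through the identity $\langle a,bc\rangle=\langle ab,c\rangle=\langle ca,b\rangle$ and the Casimir identity for $\sum_\lambda\el\otimes\eu$.
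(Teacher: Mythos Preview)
Your overall approach---direct case-by-case computation using the explicit formulas---is exactly what the paper does (its proof is literally ``This follows from a straightforward computation''). However, your case analysis contains a basic misreading of the pairing that propagates into several wrong sub-cases.

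The pairing $\langle\cdot,\cdot\rangle$ on $\calD^*(A,A)$ is nonzero \emph{only} between a cochain $f\in C^m(A,A)$ (degree $m\geq 0$) and a chain $\alpha\in C_m(A,A)$ (degree $-m-1$); it vanishes between two cochains and between two chains. Consequently your first two cases in part (i) (``both in non-negative degrees'' and ``both in negative degrees'') are not instances of $\delta$ or $b$ being self-adjoint---they are trivially zero on both sides. The genuine content of (i) is: (a) $x\in C^m(A,A)$, $y\in C_{m+1}(A,A)$ (and the symmetric case), which is the standard duality $\langle\delta f,\alpha\rangle=\pm\langle f,b\alpha\rangle$; and (b) the single boundary case $x,y\in C_0(A,A)$ (both in degree $-1$), where $\partial=\tau$ on each side and one must check $\langle\tau(x),y\rangle=\langle x,\tau(y)\rangle$, which follows from symmetry of the Casimir element $\sum_\lambda \el\otimes\eu$. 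Your claimed boundary cases ($x\in C_1$ vs.\ $y\in C^0$, and $x\in C_0$ vs.\ $y\in C^1$) do not involve $\tau$ at all: the first is the ordinary $b$--$\delta$ duality, and the second is vacuously zero on both sides.

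For part (ii) there is a second issue: you propose reducing some mixed cases to others via ``the graded commutativity of $\star$ on $\THH^*(A,A)$ established in Proposition~\ref{prop6.9}''. But Lemma~\ref{lem6.11}(ii) is a \emph{cochain-level} identity, while Proposition~\ref{prop6.9} is only at the cohomology level (and in any event uses homotopies, not equalities). You cannot invoke it here. All six nontrivial sign patterns $(\pm,\pm,\pm)$ for $(x,y,z)$ with $|x|+|y|+|z|=-1$ must be checked directly from the formulas (i)--(iv) of Section~\ref{section-star2}; each is a one-line substitution into $\langle a_0,f(s\overline a_{1,m})\rangle$ using associativity and symmetry of $\langle\cdot,\cdot\rangle$.
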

\begin{proof}
This follows from a straightforward computation.
\end{proof}

We now define a Lie bracket $\{\cdot, \cdot\}$ (of degree -1) on $\calD^*(A, A)$ in the following cases. 
\begin{enumerate}[\upshape(i)]
\item For $p, q\in \Z_{\geq 0}$, define $\{\cdot, \cdot\}: C_p(A, A)\otimes C_q(A, A)\rightarrow C_{p+q+2}(A, A)$ as $\{\alpha, \beta\}:=\alpha\bullet \beta-(-1)^{pq}\beta\bullet \alpha,$
 where $$\beta\bullet \alpha:=\sum^{p+1}_{i=1} (-1)^{qi} a_0\otimes s\overline{a}_{1, i-1}\otimes s\overline{\el b_0}\otimes s\overline{b}_{1, q}\otimes s\overline{\eu} \otimes s\overline{a}_{i, p}.$$
\item For $f, g\in C^*(A, A)$, define $\{f, g\}$ to be the classical Gerstenhaber bracket $[f, g]$.
\item For $m, p\in \Z_{\geq 0}$ such that $p\geq m-1$, define $\{\cdot, \cdot\}: C_p(A, A)\otimes C^m(A, A)\rightarrow C_{p-m+1}(A, A)$ as $\langle \{\alpha, f\}, g\rangle:=(-1)^{m-1}\langle \alpha, [f, g]\rangle,$ for all $g\in C^{p-m+1}(A, A)$.  Since the pairing is non-degenerate, the  above identity uniquely determines  the Lie bracket $\{\alpha, f\}$. Similarly, we define $\{f, \alpha\}$ by   $\langle \{f, \alpha\}, g\rangle:=(-1)^{m-1}\langle \alpha, [g, f]\rangle.$
\item For  $p\leq m-2$,  the brakcet $\{\cdot, \cdot\}: C_p(A, A)\otimes C^m(A, A) \rightarrow C^{m-p-2}(A, A)$ is uniquely determined by $\langle \{f, \alpha\}, \beta\rangle:=(-1)^{p}\langle f, \{\alpha, \beta\}\rangle.$
Similarly, $\langle \{\alpha, f\}, \beta\rangle:=(-1)^{p}\langle f, \{\beta, \alpha\}\rangle$ determines the Lie bracket $\{\alpha, f\}$.
\end{enumerate}
It is clear that $\{\cdot, \cdot\}$ is graded skew-symmetric.
\begin{lem} For  $x, y\in \calD^*(A, A)$, we have
$\partial (\{x, y\})=\{\partial(x), y\}+(-1)^{|x|-1}\{x, \partial(y)\}.$
\end{lem}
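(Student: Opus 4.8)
The plan is to verify the compatibility of $\{\cdot,\cdot\}$ with $\partial$ by a case analysis matching the four cases in the definition of the bracket, reducing the mixed cases to the two "pure" cases via the non-degenerate pairing $\langle\cdot,\cdot\rangle$. First I would recall that on $\calD^{\geq 0}(A,A)=C^*(A,A)$ the bracket is the Gerstenhaber bracket, so the identity $\partial(\{f,g\})=\{\partial(f),g\}+(-1)^{|f|-1}\{f,\partial(g)\}$ is precisely the classical statement that the Gerstenhaber bracket is a chain map with respect to $\delta=[\mu,-]$ (see the discussion around Theorem~\ref{thm-ger} and equation~(\ref{equation-delta})); nothing new is needed there. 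Dually, on $\calD^{<0}(A,A)=C_{*}(A,A)$ (after the identification $A^{\vee}\cong A$) the bracket is given by the explicit operator $\alpha\bullet\beta$, and the required identity is a finite bookkeeping computation with the Hochschild boundary $b$ — the same type of computation already carried out for the $\star$-product in the displayed identity~(\ref{equation-star}) inside the proof of Proposition~\ref{prop6.9}. So I would do this pure-chain case by the analogue of that telescoping argument: expand $b(\beta\bullet\alpha)$, collect the interior terms that pair up to give $b(\beta)\bullet\alpha$ and $\beta\bullet b(\alpha)$, and identify the two boundary terms as $\pm\{\alpha,\beta\}$-type expressions; the skew-symmetrization then yields the Leibniz rule for $\{\cdot,\cdot\}$ on $C_*(A,A)$.

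Next I would handle the genuinely mixed cases (iii) and (iv), where one argument is a cochain $f\in C^m$ and the other a chain $\alpha\in C_p$, using that the bracket there is \emph{defined} by the adjunction formulas through $\langle\cdot,\cdot\rangle$ together with Lemma~\ref{lem6.11}. For instance in case (iii), with $p\geq m-1$, for any test element $g$ of the appropriate degree one computes
\begin{equation*}
\langle \partial(\{\alpha,f\}),g\rangle=(-1)^{|\{\alpha,f\}|-1}\langle \{\alpha,f\},\partial(g)\rangle=(-1)^{|\{\alpha,f\}|-1}(-1)^{m-1}\langle \alpha,[f,\partial(g)]\rangle
\end{equation*}
by Lemma~\ref{lem6.11}(i) and the defining formula; one then rewrites $[f,\partial(g)]$ using that the Gerstenhaber bracket is a chain map together with $\langle\partial(\alpha),-\rangle=\pm\langle\alpha,\partial(-)\rangle$ again, and compares with the defining formulas for $\{\partial(\alpha),f\}$ and $\{\alpha,\partial(f)\}$. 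Because the pairing is non-degenerate (Lemma~\ref{lem6.11}), an identity of the form $\langle\text{LHS},g\rangle=\langle\text{RHS},g\rangle$ for all $g$ forces $\text{LHS}=\text{RHS}$, which is exactly what is wanted. The same mechanism, now invoking both Lemma~\ref{lem6.11}(i) and the pure-chain Leibniz rule established above (applied to $\{\alpha,\beta\}$ inside $\langle f,\{\alpha,\beta\}\rangle$), disposes of case (iv). The only subtlety is keeping track of the sign exponents $|\{x,y\}|=|x|+|y|-1$ and the degree shifts in each of the four regions, which is routine once one fixes conventions.

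The main obstacle I expect is not conceptual but combinatorial: getting all the signs consistent across the boundary between the "positive" and "negative" parts of $\calD^*(A,A)$, since the bracket is assembled from four separately-defined pieces and the differential $\partial$ crosses between $C_0(A,A)$ and $C^0(A,A)$ via $\tau$. In particular one must check that the pure-chain formula and the mixed formula agree (up to the Leibniz correction terms) when one of the inputs has degree exactly $-1$ or $0$, i.e.\ at the seam $\tau\colon C_0(A,A)\to C^0(A,A)$; this is the analogue of the boundary subtlety already flagged in Remark~\ref{remark6.7} for the $\star$-product. I would isolate this borderline case and check it by hand, confirming that the telescoping from case (i) and the adjunction from case (iii) produce matching expressions there. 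With that seam verified and the bulk handled by the telescoping computation on chains, the known chain-map property of the Gerstenhaber bracket on cochains, and the non-degeneracy of $\langle\cdot,\cdot\rangle$, the lemma follows.
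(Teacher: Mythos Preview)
Your proposal is correct and matches the paper's approach: the paper's one-line proof ``This follows from Lemma~\ref{lem6.11} and (\ref{equation-star})'' is exactly the argument you have unpacked---the pure cochain case is classical, the pure chain case is the skew-symmetrization of the telescoping identity~(\ref{equation-star}) (whose $\bullet$ becomes the one in the bracket after the identification $A\cong A^{\vee}$, so that the $\star$-defect terms cancel in $\{\alpha,\beta\}=\alpha\bullet\beta-(-1)^{pq}\beta\bullet\alpha$), and the mixed cases are obtained by duality through the non-degenerate pairing using Lemma~\ref{lem6.11}(i). Your caution about the seam at $\tau$ is sensible but no extra work is needed beyond the case split.
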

\begin{proof}
This follows from  Lemma \ref{lem6.11} and (\ref{equation-star}).
\end{proof}

\begin{lem}
Let $\alpha:=a_0\otimes s\overline{a}_{1, p}\in C_p(A,A)$ and $f\in C^m(A, A)$.   Then
\begin{enumerate}
\item if $p\geq m-1$, we have
\begin{eqnarray*}
\{\alpha, f\lefteqn{\}=-\sum_{i=1}^{p-m+1}(-1)^{(m-1)(p+i)} a_0\otimes s\overline{a}_{1, i-1}\otimes s\overline{f}(s\overline{a}_{i,i+m-1})\otimes s\overline{a}_{i+m, p}}\\
&&+\sum_{\lambda}\sum_{i=1}^m (-1)^{(i-1)(p-m)+m-1}\langle a_0, f(s\overline{a}_{1, i-1}\otimes s\overline{\el} \otimes s\overline{a}_{i+p-m+1, p})\rangle \\
&&\eu \otimes  s\overline{a}_{i, i+p-m},
\end{eqnarray*}
\item if $p\leq m-2$,  we have
\begin{eqnarray*}
\{\alpha, f\}(s\overline{b}_{1, r}\lefteqn{)=-\sum_{i=1}^{m-p-1}(-1)^{p(m-i)} f(s\overline{b}_{1, i-1}\otimes s\overline{\el a_0}\otimes s\overline{a}_{1, p}\otimes s\overline{\eu}\otimes s\overline{b}_{i, r})+}\\
&&\sum_{\lambda, \mu}\sum_{i=1}^{p+1}(-1)^{ri+p} \langle a_0, f(s\overline{a}_{1, i-1} \otimes s\overline{\el e_{\mu}}\otimes s\overline{b}_{1, r}\otimes s\overline{\eu} \otimes s\overline{a}_{i, p})\rangle e^{\mu}
\end{eqnarray*}
where $r:=m-p-2$.
\end{enumerate}
\end{lem}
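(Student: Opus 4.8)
The plan is to compute $\{\alpha, f\}$ directly from its defining formulas in cases (iii) and (iv) above, using the non-degeneracy of the pairing $\langle\cdot,\cdot\rangle$ and the explicit description of the classical Gerstenhaber bracket on $C^*(A,A)$. The two items correspond precisely to the two cases $p\geq m-1$ and $p\leq m-2$ of the definition of $\{\cdot,\cdot\}$, so in each case I would unravel the defining identity, substitute the formula for $[f,g]=f\circ g-(-1)^{(m-1)(n-1)}g\circ f$, and then rewrite everything using the pairing $\langle f(s\overline{a}_{1,m}),a_0\rangle$ in order to read off $\{\alpha,f\}$ as an explicit chain or cochain. The point is that $\{\alpha,f\}$ is only \emph{implicitly} defined (via the pairing with an arbitrary test element $g$ or $\beta$), so the content of the lemma is to produce the closed-form representative; once a candidate formula is written down, checking it amounts to pairing it against $g$ (resp. $\beta$) and matching with $(-1)^{m-1}\langle\alpha,[f,g]\rangle$ (resp. $(-1)^p\langle f,\{\beta,\alpha\}\rangle$).

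First I would treat item (1), the case $p\geq m-1$. Here $\{\alpha,f\}\in C_{p-m+1}(A,A)$ is determined by $\langle\{\alpha,f\},g\rangle=(-1)^{m-1}\langle\alpha,[f,g]\rangle$ for all $g\in C^{p-m+1}(A,A)$. Writing $n:=p-m+1$ and expanding $[f,g]=f\circ g-(-1)^{(m-1)(n-1)}g\circ f$, the term $f\circ g$ inserts $g$ into one of the $m$ slots of $f$, and after pairing with $a_0\otimes s\overline{a}_{1,p}$ and moving the resulting arguments around, this produces the first sum in the claimed formula (the one involving $s\overline{f}(s\overline{a}_{i,i+m-1})$, i.e. $f$ acting on an internal block of $\alpha$). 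The term $g\circ f$ inserts $f$ into $g$; here one uses the symmetry and associativity of $\langle\cdot,\cdot\rangle$ — concretely $\langle a_0, f(\cdots \el \cdots)\rangle$ combined with $\sum_\lambda \el \otimes \eu$ being the Casimir-type element dual to the form — to transfer the $g$ off the pairing and recognize the second sum (the one with $\sum_\lambda$, $\eu\otimes s\overline{a}_{i,i+p-m}$). Tracking the Koszul signs through the insertion formula and through the pairing is the bookkeeping-heavy part; I expect the signs $(-1)^{(m-1)(p+i)}$ and $(-1)^{(i-1)(p-m)+m-1}$ to fall out of combining the $(i-1)(n-1)$ sign in the definition of $f\circ g$, the degree shift in $s\overline{f}$, and the $(-1)^{m-1}$ prefactor.

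Item (2), the case $p\leq m-2$, is dual: now $\{\alpha,f\}\in C^{m-p-2}(A,A)$ is determined by $\langle\{f,\alpha\},\beta\rangle=(-1)^p\langle f,\{\alpha,\beta\}\rangle$ (equivalently $\langle\{\alpha,f\},\beta\rangle=(-1)^p\langle f,\{\beta,\alpha\}\rangle$) for all chains $\beta\in C_{m-p-2}(A,A)$, where $\{\beta,\alpha\}$ is the chain-level bracket from case (i), i.e. built from the $\bullet$-operation $\beta\bullet\alpha$ involving the Casimir element $\sum_\lambda s\overline{\el b_0}\cdots s\overline{\eu}$. So I would substitute the formula for $\{\beta,\alpha\}=\beta\bullet\alpha-(-1)^{\cdots}\alpha\bullet\beta$, pair with $f$, and use non-degeneracy to solve for $\{\alpha,f\}$ evaluated on a general input $s\overline{b}_{1,r}$ with $r=m-p-2$. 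The first sum in the claimed formula comes from the $\alpha\bullet\beta$-type term and the second (double-sum over $\lambda,\mu$ with $e^\mu$) from the $\beta\bullet\alpha$-type term, again with the Casimir-duality identity $\sum_\lambda \langle a_0, \el(-)\rangle\, \eu = $ (reinsertion of $a_0$) used to collapse the pairing. The main obstacle in both items is purely the sign and index reindexing — there is no conceptual difficulty, but one must be careful that the test elements $g,\beta$ range over exactly the right degree so that non-degeneracy of $\langle\cdot,\cdot\rangle$ genuinely pins down the representative, and that the Casimir element is inserted with the correct degree $-1$ shift $\pi$. I would therefore present the proof as "a direct computation" after indicating the $f\circ g$ vs.\ $g\circ f$ (resp.\ $\beta\bullet\alpha$ vs.\ $\alpha\bullet\beta$) split and the Casimir-duality step, exactly as the surrounding lemmas in this section do.
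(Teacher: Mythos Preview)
Your approach is exactly what the paper does: its proof reads simply ``This follows from straightforward computations,'' and your proposal spells out precisely that computation via the defining pairing identities and the dual-basis (Casimir) trick $x=\sum_\lambda \langle x,\el\rangle\,\eu$. One small slip to fix when you carry it out: in item (1) you have the roles of $f\circ g$ and $g\circ f$ reversed. It is the term $g\circ f$ (inserting $f$ into the slots of $g$) that, after pairing with $\alpha$, yields the first sum containing $s\overline{f}(s\overline{a}_{i,i+m-1})$, since
\[
\langle \alpha,\, g\circ f\rangle
=\sum_{i} \pm\,\bigl\langle a_0,\; g\bigl(s\overline{a}_{1,i-1}\otimes s\overline{f}(s\overline{a}_{i,i+m-1})\otimes s\overline{a}_{i+m,p}\bigr)\bigr\rangle,
\]
which is already of the form $\langle -,\,g\rangle$; whereas $f\circ g$ has $g$ trapped inside an argument of $f$ and requires the dual-basis identity to extract it, producing the second sum with $\sum_\lambda$ and $\eu\otimes s\overline{a}_{i,i+p-m}$. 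Your identification in item (2) is correct as stated.
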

\begin{proof}
This follows from straightforward computations.
\end{proof}
\begin{rem}
We stress that the  Jacobi identity does {\it not} hold on $\calD^*(A, A)$, although it does indeed  when all three elements are restricted to either $\calD^{<0}(A, A)$ or $\calD^{\geq 0}(A, A)$ (through a direct computation). 
Nevertheless,  it follows from  Proposition \ref{prop6.19} below that the Jacobi identity holds on the cohomology level since  the Lie bracket $\{\cdot, \cdot\}$  coincides with $[\cdot, \cdot]$ on $\HH_{\sg}^*(A, A)$. As a subsequent investigation, we prove in \cite{RiWa} that the Lie bracket $\{\cdot, \cdot\}$ can be  extended to an $L_{\infty}$-algebra structure on $\calD^*(A,A).$
\end{rem}
\begin{rem}
The Lie bracket $\{\cdot, \cdot\}$ restricted to $C_{>0}(A, A)$ coincides with the bracket constructed in \cite[Theorem 6.1]{Abb},  if  $A$ is a commutative symmetric algebra. Moreover, Abbaspour proved in loc. cit.  that  the homology $H_*(C_{>0}(A, A))$, endowed with the $\star$-product and Lie bracket $\{\cdot, \cdot\}$, is a BV algebra (without unit) whose BV operator is the Connes' B operator. Namely, \begin{equation}\label{bv-identity}\{\alpha, \beta\}=(-1)^{p}(B(\alpha\star\beta)-B(\alpha)\star\beta-(-1)^{p-1}\alpha \star B(\beta)),\end{equation}
for any $\alpha\in H^p(C_{>0}(A, A))$ and $\beta\in H^q(C_{>0}(A, A))$, where $p, q\in\Z_{>0}$. In the following, we will prove that this identity also holds on $H^{\leq 0}(\calD^*(A, A))$ for a (not necessarily commutative) symmetric algebra $A$.
\end{rem}


\begin{prop}\label{prop6.15}
Let $A$ be a symmetric  $k$-algebra.  Then  $H^{\leq 0}(\calD^*(A, A))$, equipped with the $\star$-product and Lie bracket $\{\cdot, \cdot\}$, is a BV algebra (with unit) whose BV operator is the Connes' B operator.
\end{prop}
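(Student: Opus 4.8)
The plan is to verify directly the axioms in the definition of a Batalin--Vilkovisky algebra given in Section \ref{section-gerstenhaber} for the triple $\bigl(H^{\le 0}(\calD^*(A,A)),\star,B\bigr)$, where $B$ denotes the operator on $\calD^*(A,A)$ equal to Connes' $B$ operator on the chain part (degrees $\le -1$, i.e.\ on $C_*(A,A)$) and equal to zero on the degree-zero term $C^0(A,A)$; and then to check that the associated Lie bracket is exactly $\{\cdot,\cdot\}$. By Proposition \ref{prop6.9}, $\bigl(H^{\le 0}(\calD^*(A,A)),\star\bigr)$ is already a unital graded-commutative associative algebra (note that $\star$ has degree $0$ and $\{\cdot,\cdot\}$ degree $-1$, so $H^{\le 0}$ is closed under both and contains the unit, the class of $1_A\in C^0(A,A)$); moreover $B(1_A)=1\otimes s\overline 1=0$ because $\overline 1=0$ in $\overline A$. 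Hence it remains to establish: (1) $B$ descends to a square-zero degree $-1$ operator on $H^{\le 0}(\calD^*(A,A))$; (2) the identity
\[
\{x,y\}=(-1)^{|x|}\bigl(B(x\star y)-B(x)\star y-(-1)^{|x|}x\star B(y)\bigr)
\]
holds on $H^{\le 0}(\calD^*(A,A))$; and (3) the bracket given by the right-hand side above is a biderivation for $\star$, equivalently $B$ is a second-order differential operator, equivalently the seven-term BV identity of Section \ref{section-gerstenhaber} holds.

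For (1): on the chain part $C_*(A,A)$ this is the classical statement that $(C_*(A,A),b,B)$ is a mixed complex, that is $bB+Bb=0$ and $B^2=0$ on normalized chains. The only additional check is at the junction in degrees $-1$ and $0$: since $B$ vanishes on $C^0(A,A)$, compatibility of $B$ with the differential $\tau\colon C_0(A,A)\to C^0(A,A)$ comes down to $b_1B=0$ on $C_0(A,A)$, which is immediate from $b_1(1\otimes s\overline x)=x-x=0$. Together with $B^2=0$ this shows that $B$ induces a well-defined degree $-1$ operator on $H^{\le 0}(\calD^*(A,A))$, vanishing in degree $0$.

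For (2) and (3) I would argue case by case on the degrees of the arguments. When all arguments lie in the chain part (degrees $\le -1$), identity (2) is precisely the string-topology BV identity (\ref{bv-identity}), and the seven-term identity is the Gerstenhaber (biderivation) part of that BV structure; both are Abbaspour's theorem \cite[Theorem 6.1]{Abb} for a \emph{commutative} symmetric algebra, and the task is to remove the commutativity hypothesis. I expect this can be done by the method already used for the graded commutativity and associativity of $\star$ in the proof of Proposition \ref{prop6.9}: write down an explicit chain homotopy, built from iterated ``bullet'' operators in the spirit of the operators $\beta\bullet\alpha$, $f\bullet\alpha$ appearing there and of (\ref{equation-star}), whose $\partial$-boundary realizes the difference of the two sides. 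The mixed cases, where one argument has degree $0$ and the remaining ones lie in degrees $\le -1$, and the purely degree-$0$ case, are short: $B$ vanishes on $C^0(A,A)$ and the Gerstenhaber bracket $[f,g]$ vanishes for $f,g\in C^0(A,A)$, so after unwinding the definitions of $\star$ and $\{\cdot,\cdot\}$ through the pairing (\ref{equation-pairing}) and through the map $\tau$, and using the cocycle conditions (in particular $\delta^0f=0$, i.e.\ $f$ central), both sides reduce to $\partial$-boundaries.

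Once (1)--(3) are established, $\bigl(H^{\le 0}(\calD^*(A,A)),\star,B\bigr)$ satisfies the definition of a BV algebra with unit, and by \cite[Proposition 1.2]{Get} the associated Lie bracket --- which equals $\{\cdot,\cdot\}$ by (2) --- makes it a Gerstenhaber algebra compatibly with $\star$. The hard part will be the chain-level input of (2) and (3): extending Abbaspour's computation from the commutative to the arbitrary symmetric case, that is, producing the explicit homotopies and controlling the signs together with the cyclic-rotation combinatorics introduced by $B$. Concretely I would stratify the summands of $B(\alpha\star\beta)$ according to the position of the cyclic cut relative to the tensor block that $\star$ inserts, recognize the bulk of these strata as $\partial$-images of an iterated bullet operator, and verify that the surviving strata are exactly $B(\alpha)\star\beta$, $\alpha\star B(\beta)$ and the two terms $\alpha\bullet\beta$, $\beta\bullet\alpha$ that define $\{\alpha,\beta\}$.
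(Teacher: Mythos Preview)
Your outline for (1) and (2) matches the paper's proof closely. The paper also sets $B=0$ on $\calD^0(A,A)$, invokes Abbaspour's argument as ``completely analogous'' for the BV identity when both classes lie in $H^{\le -2}(\calD^*(A,A))$, and then treats the boundary cases by hand. One minor sharpening: the paper singles out not only degree $0$ but also degree $-1$ as requiring a separate argument (Abbaspour's statement is about $C_{>0}(A,A)$, so $C_0(A,A)$, sitting in degree $-1$, is a genuine edge case). For $\alpha\in H^{-1}$ the paper builds two explicit homotopies $H_1(\alpha,\beta)$ and $H_2(\alpha,\beta)$ realising the two halves of the BV identity; your stratification-of-$B(\alpha\star\beta)$ idea is exactly the mechanism behind these.

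The substantive difference is in (3). The paper does \emph{not} verify the seven-term identity, or the Leibniz rule, by any direct chain-level computation. Instead it uses the embedding $\iota^*\colon\calD^*(A,A)\hookrightarrow C^*_{\sg}(A,A)$ of Lemma~\ref{lemma6.1} (a quasi-isomorphism for self-injective $A$ by Proposition~\ref{prop6.4}), observes that $\iota^*(\{\alpha,\beta\})=[\iota^*(\alpha),\iota^*(\beta)]$ already on the cochain level for $\alpha,\beta\in\calD^{\le 0}(A,A)$, and then imports the Leibniz rule from the Gerstenhaber structure on $\HH^*_{\sg}(A,A)$ established in Corollary~\ref{cor5.3}. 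This completely sidesteps the three-variable combinatorics you anticipate. Your direct route is valid and self-contained, but the paper's shortcut trades that work for the $B_\infty$-machinery of Section~5; given that machinery is already in place, the paper's argument is much shorter. The paper also records the Jacobi identity on $\calD^{\le 0}(A,A)$ by a separate direct computation, whereas in your scheme Jacobi would fall out of (1)--(3) via \cite[Proposition~1.2]{Get}; either route is fine.
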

\begin{proof} The Jacobi identity for $\{\cdot, \cdot\}$ on $\calD^{\leq 0}(A, A)$ can be verified by a direct computation.  Let us prove the BV identity (\ref{bv-identity}).
Since the proof is completely  analogous to the one of \cite[Theorem 6.1]{Abb} if $\alpha, \beta\in H^{\leq -2}(\calD^*(A, A))$, we omit this proof here.  Thus it is sufficient to consider the cases where at least one of $\alpha, \beta$ is either in $H^{-1}(\calD^*(A, A))$ or $H^0(\calD^*(A, A))$. We define the operator $B|_{\calD^0(A, A)}=0$.
\begin{enumerate}
\item If $\alpha, \beta \in H^0(\calD^*(A, A))$, then $\{\alpha, \beta\}=0$.  So the BV identity holds.
\item If only $\alpha\in H^0(\calD^*(A,A))$, then we have $\langle \{\alpha, \beta\}, f\rangle=(-1)^q\langle \beta, \{f, \alpha\}\rangle$ for all $f\in C^{q+1}(A, A)$. Thus the BV identity for $\{\alpha, \beta\}$ follows from  that for $[f, \alpha]$  on $\HH^*(A, A)$ (cf. Section \ref{section-gerstenhaber}).
\item If $\alpha\in H^{-1}(\calD^*(A, A))$, we write $\alpha:=a_0 \in C_0(A, A)$. Then we have $\partial(a_0)=\sum_{\lambda} \el a_0\eu=0$.  Observe  that
$$\partial (H_1(\alpha, \beta))=\beta\bullet \alpha+B_1(\alpha\star \beta)-B(\alpha)\star \beta$$
where $H_1(\alpha, \beta)=\sum_{\lambda} 1\otimes s\overline{a_0}\otimes s\overline{\el b_0} \otimes s\overline{b}_{1, q}\otimes s\overline{\eu}.$
Here  $B(\alpha\star \beta)=B_1(\alpha\star \beta)+B_2(\alpha\star \beta)$ and $B_1(\alpha\star \beta)=(-1)^{q-1}1\otimes s\overline{a_0\el b_0}\otimes s\overline{b}_{1, q}\otimes s\overline{\eu}.$
Therefore, it remains to verify the identity $\alpha\bullet \beta=B_2(\alpha, \beta)+\alpha\star B(\beta)$ in $H^{-q-3}(\calD^*(A, A))$. Let us construct a homotopy
\begin{eqnarray*}H_2(\alpha, \beta)=\sum_{\lambda}\sum_{0\leq j\leq i\leq q }(-1)^{(j-1)q} 1\otimes s\overline{b}_{j+1, i}\otimes s\overline{\el a_0}\otimes s\overline{\eu} \otimes s\overline{b}_{i+1, q}\otimes s\overline{b}_{0, j}.\end{eqnarray*}
Substituting $\partial(\alpha)=0$ and $\partial(\beta)=0$ into $\partial(H_2(\alpha, \beta))$,
we get three terms $-\alpha\bullet\beta, B_2(\alpha, \beta)$ and $\alpha\star B(\beta)$, which correspond to the terms when $j=0, i=q$ and $j=i,$ respectively.  Checking the sign, we have
$$\partial(H_2(\alpha, \beta))=-\alpha\bullet \beta+B_2(\alpha, \beta)+\alpha\star B(\beta).$$ 
\end{enumerate}
This verifies the BV identity. It remains to prove the Leibniz rule.
 Recall that the embedding $\iota: \calD^*(A, A)\hookrightarrow C_{\sg}^*(A, A)$ is a quasi-isomorphism. We note  that $\iota(\{\alpha, \beta\})=[\iota(\alpha), \iota(\beta)]$ for any $\alpha, \beta\in \calD^{\leq 0}(A, A)$. Thus the Leibniz rule for $H^{\leq 0}(\calD^*(A, A))$ is deduced from  that for $\HH^*_{\sg}(A, A)$ (cf. Corollary \ref{cor5.3}). This proves the proposition.
\end{proof}
\subsubsection{BV algebra structure}\label{subsection-BV}
From Theorem \ref{thm-bv2} and Proposition \ref{prop6.15},  it follows that both $\HH_{\sg}^{\geq 0}(A, A)$ and $\HH_{\sg}^{\leq 0}(A, A)$ have a BV algebra structure.
The aim of this section is  to prove the following result.
\begin{thm}\label{thm6.17}
Let $A$ be a symmetric  $k$-algebra. Then the Tate-Hochschild cohomology $\HH_{\sg}^*(A, A)$, equipped with the cup product $\cup$ and Lie bracket $[\cdot, \cdot]$ (cf. Section \ref{section4}), is a BV algebra whose BV operator $\widetilde{\Delta}^*$ is defined on $\calD^*(A, A)$ by
\begin{equation*}
\widetilde{\Delta}^i:=
\begin{cases}
\Delta^i  &  \mbox{for $i> 0$,}\\
0 & \mbox{for $i=0$,}\\
-B_{-i-1} & \mbox{for $i\leq -1$,}
\end{cases}
\end{equation*}
where $\Delta$ is determined by
$(-1)^{m-1}\langle \Delta(f)(s\overline{a}_{1, m-1}), a_0\rangle=\langle B(a_0\otimes s\overline{a}_{1, m-1}), f\rangle$.
\end{thm}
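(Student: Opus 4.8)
The plan is to transfer the problem to the generalized Tate-Hochschild complex $\calD^*(A,A)$, which computes $\HH^*_{\sg}(A,A)$ by Proposition \ref{prop6.4} (a symmetric algebra being self-injective); the product $\cup$ and bracket $[\cdot,\cdot]$ on $\HH^*_{\sg}(A,A)$ correspond under $\widetilde{\iota}^{*}$ to the $\star$-product and bracket $\{\cdot,\cdot\}$ on $H^*(\calD^*(A,A))$ (by the corollary following Proposition \ref{prop6.9} and by Proposition \ref{prop6.19}). Since $(\HH^*_{\sg}(A,A),\cup,[\cdot,\cdot])$ is already known to be a Gerstenhaber algebra (Corollary \ref{cor5.3}), the definition of a BV algebra reduces the theorem to three points: (A) $\widetilde{\Delta}$ descends to a degree $-1$ operator on $H^*(\calD^*(A,A))$ with $\widetilde{\Delta}(1)=0$ and $\widetilde{\Delta}^2=0$; (B) the BV identity $\{a,b\}=(-1)^{m}(\widetilde{\Delta}(a\star b)-\widetilde{\Delta}(a)\star b-(-1)^{m}a\star\widetilde{\Delta}(b))$ for $a$ of degree $m$; and (C) the seven-term relation. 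Points (A) and (B) imply (C) formally: by (B) the derived bracket of $\widetilde{\Delta}$ is $\{\cdot,\cdot\}$, which obeys the graded Leibniz rule with respect to $\star$ by Corollary \ref{cor5.3}, and together with $\widetilde{\Delta}^2=0$ the standard characterization of BV structures (cf. \cite{Get}) then yields the seven-term relation. So the real content lies in (A) and (B).

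For (A), I would first check that $\widetilde{\Delta}$ anticommutes with the differential $\partial$ of $\calD^*(A,A)$. On the part of degree $\ge 1$ this is $\Delta\delta+\delta\Delta=0$ on Hochschild cochains, dual to $Bb+bB=0$ on Hochschild chains; on the part of degree $\le -2$ it is again $Bb+bB=0$. The only genuinely new identities occur at the junction: one needs $\Delta^1\circ\delta^0=0$ on $C^0(A,A)=A$, which follows from the symmetry and associativity of the Frobenius form together with the defining formula for $\Delta$, and $b_1\circ B_0=0$ on $C_0(A,A)$, which is immediate. These cancellations are precisely what force the choice $\widetilde{\Delta}^0=0$. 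Then $\widetilde{\Delta}^2=0$ is checked degreewise: it is $\Delta^2=0$ (dual to $B^2=0$) above degree $0$, $B^2=0$ below degree $0$, and it vanishes through degree $0$ because $\widetilde{\Delta}^0=0$; and $\widetilde{\Delta}(1)=0$ since the unit lies in degree $0$.

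For (B), when $a$ and $b$ both have non-negative degree the identity is Theorem \ref{thm-bv2} applied to $\HH^*(A,A)=H^*(\calD^{\ge 0}(A,A))$ (lifting degree-zero classes along $\rho$), and when they both have non-positive degree it is Proposition \ref{prop6.15}. The main obstacle is the mixed case, say $\deg a>0$ and $\deg b<0$ (the remaining case being symmetric). Here I would exploit the non-degenerate pairing $\langle\cdot,\cdot\rangle$ on $\calD^*(A,A)$ of Lemma \ref{lem6.11}: under it $\star$ is self-adjoint, $\widetilde{\Delta}$ is graded self-adjoint (because $\Delta$ was defined as the transpose of Connes' $B$), and the mixed-degree brackets $\{\cdot,\cdot\}$ were \emph{defined} precisely by the adjunction formulas of Section \ref{section6}, cases (iii)--(iv). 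Pairing the desired identity against an arbitrary test element $c$ of complementary degree and applying these adjunctions transforms it into a BV-type identity in which all brackets, $\star$-products and applications of $\widetilde{\Delta}$ act within a single half --- either $C^*(A,A)$ or $C_*(A,A)$ --- where it is already established. The bulk of the remaining work is the sign bookkeeping and the enumeration of sub-cases according to the signs of $\deg(a\star b)$, $\deg(\widetilde{\Delta}(a)\star b)$, and so on; this is lengthy but routine, entirely in the style of the proofs of Propositions \ref{prop6.9} and \ref{prop6.15}, and I expect no conceptual obstacle there. Transporting (A) and (B) back along $\widetilde{\iota}^{*}$ then completes the proof.
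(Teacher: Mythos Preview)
Your proposal is correct and follows essentially the same route as the paper: the paper reduces the theorem to Proposition \ref{prop6.19} (compatibility of brackets under $\widetilde{\iota}^*$) and Lemma \ref{lem6.18} (your point (B), the BV identity on $\THH^*(A,A)$), and proves the mixed-degree case of the latter exactly as you propose, by pairing against a test element and using the defining adjunction formulas for $\{\cdot,\cdot\}$ together with self-adjointness of $\star$ and of $\widetilde{\Delta}$ to reduce to the pure cases covered by Theorem \ref{thm-bv2} and Proposition \ref{prop6.15}. Your treatment is in fact slightly more explicit than the paper's in isolating point (A) (that $\widetilde{\Delta}$ is a chain map with $\widetilde{\Delta}^2=0$, which requires the junction checks at degrees $0$ and $-1$); the paper leaves this implicit.
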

\begin{lem}\label{lem6.18}
For any $\alpha\in \THH^*(A, A)$ and $\beta\in \THH^*(A, A)$, we have
$$\{\alpha, \beta\}=(-1)^{|\alpha|}(\widehat{\Delta}(\alpha\star \beta)- \widehat{\Delta}(\alpha)\star \beta-(-1)^{|\alpha|}\alpha\star \widehat{\Delta}(\beta)),$$
\end{lem}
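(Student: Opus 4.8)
The plan is to reduce the statement to the already-established BV identities on the two half-complexes and to the compatibility of $\widehat{\Delta}$ with the embedding $\iota$, rather than to verify the displayed identity by brute force on all degree combinations. First I would fix the notation: by definition $\widehat{\Delta}$ agrees with the Connes' $B$ operator (up to sign) on $\calD^{\le 0}(A,A)$ and with the dual of $B$ on $\calD^{\ge 0}(A,A)$, and it vanishes in degree $0$; the $\star$-product and the Lie bracket $\{\cdot,\cdot\}$ are those constructed in Section \ref{section-star2} and in the preceding subsection. The identity to be proved is the BV identity relating $\{\cdot,\cdot\}$, $\star$ and $\widehat{\Delta}$ on $\THH^*(A,A)$, exactly in the form appearing in the definition of a BV algebra. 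Since all three operations descend to cohomology (Lemma preceding \ref{prop6.9}, Lemma \ref{lem6.11}, and the cochain-level compatibilities), it suffices to establish it on cohomology classes.

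The main step is a case analysis according to where $\alpha$ and $\beta$ sit. When both lie in $H^{\ge 0}(\calD^*(A,A))$, the identity is exactly Theorem \ref{thm-bv2} together with the fact (Proposition \ref{prop4.4-1}, resp. the identification of $\cup$ with $\cup'$) that the structures on $\HH_{\sg}^{\ge 0}(A,A)$ restrict to the classical Hochschild ones; the $\Delta$ there is the dual of $B$, matching $\widehat{\Delta}^{>0}=\Delta$. When both lie in $H^{\le 0}(\calD^*(A,A))$, it is precisely Proposition \ref{prop6.15}, whose BV operator is $B$, matching $\widehat{\Delta}^{\le -1}=-B_{-i-1}$ after checking the sign conventions line up. The remaining, genuinely new case is the mixed one, say $\alpha\in H^{m}$ with $m\ge 0$ and $\beta\in H^{-n-1}$ with $n\ge 0$ (and the symmetric case). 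Here I would use the non-degeneracy of the pairing $\langle\cdot,\cdot\rangle$ on $\calD^*(A,A)$ (Lemma \ref{lem6.11}) to transpose everything: pairing $\{\alpha,\beta\}$ against a test class and using the definitions in cases (iii)--(iv) of the Lie bracket together with Lemma \ref{lem6.11}(i)--(ii), the mixed BV identity becomes an identity among $[\cdot,\cdot]$, $\cup$ and $\Delta$ on $\HH^*(A,A)$, i.e. the classical BV identity of Theorem \ref{thm-bv2}, up to bookkeeping signs coming from the shifts in the definition of $\star$ and $\{\cdot,\cdot\}$ across degree $0$.

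The cleanest unified argument, which I would present as the actual proof, is to invoke the quasi-isomorphism $\iota^*: \calD^*(A,A)\hookrightarrow C_{\sg}^*(A,A)$ of Proposition \ref{prop6.4} (valid since $A$ is symmetric, hence self-injective): one checks at the cochain level that $\iota$ intertwines $\star$ with $\cup$ (done in the proof of the corollary after Proposition \ref{prop6.9}) and $\{\cdot,\cdot\}$ with $[\cdot,\cdot]$ (stated in the proof of Proposition \ref{prop6.15}); granting in addition that $\iota$ intertwines $\widehat{\Delta}$ with a chain-level BV operator on $C_{\sg}^*(A,A)$ — which for $*>0$ is the dual of $B$ and for $*\le 0$ is $B$ itself, and which is manifestly the operator induced by Connes' $B$ and its dual — the desired identity on $\THH^*(A,A)\cong\HH_{\sg}^*(A,A)$ follows from the two half-complex BV identities plus the fact that the Gerstenhaber structure on $\HH_{\sg}^*(A,A)$ is the one of Corollary \ref{cor5.3}. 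The main obstacle is precisely the sign bookkeeping at the junction $i=-1,0$: the definitions of $\star$ and $\{\cdot,\cdot\}$ on $\calD^*(A,A)$ are pieced together from four cases with shifts by $1$, and one must verify that the sign $(-1)^{|\alpha|}$ in the BV identity is the common value compatible with all of them; I expect this to require a careful but routine check of the transposition identities in Lemma \ref{lem6.11} against the case definitions, and nothing deeper.
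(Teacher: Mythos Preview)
Your first two paragraphs are exactly the paper's proof: a case split into both classes in nonnegative degree (Theorem~\ref{thm-bv2}), both in nonpositive degree (Proposition~\ref{prop6.15}), and the mixed case handled by pairing against a test class and using the defining identities (iii)--(iv) of $\{\cdot,\cdot\}$ together with Lemma~\ref{lem6.11} to reduce to the classical BV identity on $\HH^*(A,A)$. If you present that argument, you are done and your proof coincides with the paper's.

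The ``cleanest unified argument'' you prefer, however, is circular in the logical order of the paper. Two points. First, there is no independently constructed BV operator on $C_{\sg}^*(A,A)$ for $\iota$ to intertwine $\widehat{\Delta}$ with; the operator on $\HH_{\sg}^*(A,A)$ in Theorem~\ref{thm6.17} is \emph{defined} as $\widetilde{\iota}\circ\widehat{\Delta}\circ\widetilde{\iota}^{-1}$, so appealing to such an intertwining assumes what Lemma~\ref{lem6.18} is needed to establish. Second, the compatibility $\iota(\{\alpha,\beta\})=[\iota(\alpha),\iota(\beta)]$ that you cite from the proof of Proposition~\ref{prop6.15} is only asserted there for $\alpha,\beta\in\calD^{\le 0}(A,A)$; the full compatibility across mixed degrees is Proposition~\ref{prop6.19}, which in the paper comes \emph{after} Lemma~\ref{lem6.18} and requires a lengthy explicit homotopy computation. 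So the unified route either begs the question or imports a result you have not yet proved. Stick with your duality argument for the mixed case; it is short, self-contained, and is what the paper does.
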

\begin{proof}
It follows from Theorem \ref{thm-bv2} and Proposition \ref{prop6.15} that the BV identity for $\{\alpha, \beta\}$ holds in the case $\alpha, \beta\in \THH^{\geq 0}(A, A)$ or $\alpha, \beta\in \THH^{<0}(A, A)$. 
As for other cases,  let $\alpha\in C_{p}(A, A)$ and $f\in C^m(A, A)$ such that $p\geq m-1$. By definition, we have
\begin{equation}\label{equation6.5}\langle \{\alpha, f\}, g\rangle=(-1)^{m-1}\langle \alpha, [f, g]\rangle\end{equation} for all $g\in C^{p-m+1}(A,A)$. From Theorem \ref{thm-bv2}, it follows  that  $$[f, g]=(-1)^m\left(\widehat{\Delta}(f\cup g)-\widehat{\Delta}(f)\cup g-(-1)^m f\cup \widehat{\Delta}(g)\right).$$ Substituting this into  (\ref{equation6.5}), we get
\begin{equation*}
\begin{split}
\langle\{\alpha, f\}, g\rangle&=(-1)^{m-1}\langle \alpha, [f, g]\rangle\\
& =-\left\langle \alpha, \widehat{\Delta}(f\cup g)- \widehat{\Delta}(f)\cup g-(-1)^m f\cup \widehat{\Delta}(g)\right\rangle\\
&=\left\langle (-1)^{p-1} \left(\widehat{\Delta}(\alpha\cup f)-\widehat{\Delta}(\alpha) \cup f-(-1)^{p-1}\alpha\cup \widehat{\Delta}(f)\right), g\right\rangle.
\end{split}
\end{equation*}
Thus, we have $\{\alpha, f\}=(-1)^{p-1}\left(\widehat{\Delta}(\alpha \cup f)-\widehat{\Delta}(\alpha)\cup f-(-1)^{p-1}\alpha\cup \widehat{\Delta}(f)\right).$ By the same argument, we obtain the BV identity for $p<m-1$. This proves the lemma.
\end{proof}
\begin{prop}\label{prop6.19}
The isomorphism $\widetilde{\iota}^*: \THH^*(A, A)\xrightarrow{\cong} \HH_{\sg}^*(A, A)$ is compatible with Lie brackets. Namely, $[\widetilde{\iota}^*(\alpha), \widetilde{\iota}^*(\beta)]=\widetilde{\iota}^*(\{\alpha, \beta\})$ for $\alpha, \beta\in \THH^*(A, A)$. In particular, the Jacobi identity for the Lie bracket $\{\cdot, \cdot\}$ holds on $\THH^*(A, A)$. 
\end{prop}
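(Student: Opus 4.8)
The plan is to check the identity $\iota^*(\{x,y\}) = [\iota^*(x),\iota^*(y)]$ at the cochain level wherever that is possible, passing to cohomology only where the (partly implicit) definition of $\{\cdot,\cdot\}$ forces it. By bilinearity and the graded skew-symmetry of both brackets it suffices to treat homogeneous $x,y$, and the grading of $\calD^*(A,A)$ splits the verification into three cases: (i) $x,y\in\calD^{\geq 0}(A,A)=C^*(A,A)$; (ii) $x,y\in\calD^{\leq 0}(A,A)$; and (iii) $f\in C^m(A,A)$ with $m\geq 1$ and $\alpha\in C_p(A,A)$ with $p\geq 0$. Throughout I use that $\widetilde\iota^*$ is an isomorphism of graded algebras (cf. Proposition \ref{prop6.4}) and that the non-degenerate pairing of Lemma \ref{lem6.11} descends to $\THH^*(A,A)$ and, via $\widetilde\iota^*$, to a non-degenerate pairing on $\HH_{\sg}^*(A,A)$.

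In case (i), $\{\cdot,\cdot\}$ is by definition the classical Gerstenhaber bracket on $C^*(A,A)$ and $\iota^*$ restricted to $C^*(A,A)$ is the canonical inclusion into $C_{\sg}^*(A,A)$; since $C^*(A,A)$ is a $B_\infty$-subalgebra of $C_{\sg}^*(A,A)$ (Theorem \ref{thm5.2}), its Lie bracket is the restriction of $[\cdot,\cdot]$, so $\iota^*(\{f,g\})=[\iota^*(f),\iota^*(g)]$ on the nose. In case (ii) the identity $\iota^*(\{\alpha,\beta\})=[\iota^*(\alpha),\iota^*(\beta)]$ likewise holds on the nose — this is exactly the observation already used in the proof of Proposition \ref{prop6.15} — and it follows by comparing the explicit formula for $\{\cdot,\cdot\}$ on $C_*(A,A)$ with the formula for $[\cdot,\cdot]$ on $C_{\sg}^*(A,A)$ (Section \ref{section4}) evaluated on $\iota^*(\alpha)=\sum_\lambda \el a_0\otimes s\overline{a}_{1,p}\otimes s\overline{\eu}$, using the symmetry and associativity of the form on $A$. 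Cases (i) and (ii) overlap consistently on $C^0(A,A)$, and together they give $[\widetilde\iota^*(x),\widetilde\iota^*(y)]=\widetilde\iota^*(\{x,y\})$ whenever $x,y$ both lie in $\THH^{\geq 0}(A,A)$ or both in $\THH^{<0}(A,A)$.

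Case (iii) is the crux, because there $\{\alpha,f\}$ is defined only implicitly, by adjunction against the pairing of Lemma \ref{lem6.11}, and — unlike (i) and (ii) — $\iota^*(\{\alpha,f\})$ will in general differ from $[\iota^*(\alpha),\iota^*(f)]$ as cochains (it must, since Jacobi fails for $\{\cdot,\cdot\}$ on $\calD^*(A,A)$ while it holds on the nose for $[\cdot,\cdot]$ on $C_{\sg}^*(A,A)$ by Proposition \ref{prop-lie-algebra}). I would argue on cohomology. Using the induced pairing and the defining relation $\langle\{\alpha,f\},\beta\rangle=\pm\langle\alpha,\{f,\beta\}\rangle$, the assertion $\widetilde\iota^*(\{\alpha,f\})=[\widetilde\iota^*(\alpha),\widetilde\iota^*(f)]$ reduces, by non-degeneracy, to the cyclic-invariance identity $\langle[u,v],w\rangle=\pm\langle u,[v,w]\rangle$ for $[\cdot,\cdot]$ on $\HH_{\sg}^*(A,A)$ together with the cases already settled (since $\{f,\beta\}$ falls, after at most one further use of the adjunction, into a pure case, an induction on $|\alpha|+|\beta|$ closes the recursion in the definition). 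The cyclic-invariance identity is a Tradler-type statement, proved by a direct cochain computation from the explicit formulas for $[\cdot,\cdot]$ (Section \ref{section4}) and for $\langle\cdot,\cdot\rangle$, again relying on the associative symmetric form on $A$. Alternatively one may feed Lemma \ref{lem6.18} through the isomorphism $\widetilde\iota^*$: this rewrites $\widetilde\iota^*(\{\alpha,f\})$ as the bracket derived from the operator $\widetilde\iota^*\circ\widehat{\Delta}\circ(\widetilde\iota^*)^{-1}$ on $\HH_{\sg}^*(A,A)$, whereupon it remains to identify that bracket with $[\cdot,\cdot]$, which one does by the same pairing argument. In either route the substance of the proof — and the place where I expect the real difficulty — is the cyclic-invariance input and the attendant sign bookkeeping, which is precisely the cost of the implicit definition of $\{\cdot,\cdot\}$ in mixed degrees.

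Finally, the last assertion is immediate: $(C_{\sg}^*(A,A),\delta,[\cdot,\cdot])$ is a dg Lie algebra (Proposition \ref{prop-lie-algebra}), so $[\cdot,\cdot]$ satisfies the graded Jacobi identity on $\HH_{\sg}^*(A,A)$; transporting back along $\widetilde\iota^*$, which we have just shown intertwines $\{\cdot,\cdot\}$ and $[\cdot,\cdot]$, yields the Jacobi identity for $\{\cdot,\cdot\}$ on $\THH^*(A,A)$.
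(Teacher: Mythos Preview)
Your case split matches the paper's, and your treatment of cases (i) and (ii) is correct and essentially the same as the paper's: both brackets agree at the cochain level when both arguments lie in $\calD^{\geq 0}$ or both in $\calD^{\leq 0}$.

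The gap is in case (iii). Your proposed route through ``cyclic invariance'' of the pairing on $\HH_{\sg}^*(A,A)$ is circular. The pairing on $\HH_{\sg}^*(A,A)$ is \emph{defined} by transport through $\widetilde\iota^*$; it has no independent cochain-level description on $C_{\sg}^*(A,A)$. So the identity $\langle [u,v],w\rangle=\pm\langle u,[v,w]\rangle$ unrolls to a statement about $(\widetilde\iota^*)^{-1}\big([\,\widetilde\iota^*(\alpha),\widetilde\iota^*(f)\,]\big)$, and computing that inverse image is exactly the problem at hand. Equivalently, using Lemma~\ref{lem6.11}(ii) to write the pairing as a trace of the cup product, cyclic invariance reduces (via Leibniz on $\HH_{\sg}^*$) to $\mathrm{tr}([v,x])=0$ for $|v|+|x|=0$; but evaluating this trace again requires pulling the bracket back through $\widetilde\iota^*$. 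Your alternative through Lemma~\ref{lem6.18} lands in the same place: it shows that the bracket transported from $\THH^*$ is the BV-bracket of the transported operator, but identifying that with $[\cdot,\cdot]$ is the content of the proposition, not a consequence of it.

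The paper does not avoid the work in case (iii): it picks explicit representatives via the maps $\kappa_{r,s}$ into $C^{*}(A,\Omega_{\nc}^{p+2}(A))$, writes out $[\kappa_{m,1}(f),\kappa_{-p-1,p+2}(\alpha)]-\kappa_{m-p-2,p+2}(\{f,\alpha\})$ as a finite sum, and exhibits explicit homotopies $H_k$, $H_k'$ (separately for $p\geq m-1$ and $p\leq m-2$) that kill the difference. That explicit construction is precisely what your pairing argument was trying to sidestep, and there does not appear to be a shortcut: the content of the mixed case is a chain-level comparison inside $C_{\sg}^*(A,A)$, not a formal consequence of the adjunction defining $\{\alpha,f\}$.
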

\begin{proof}
Clearly,  the  identity $[\iota^*(\alpha),\iota^*(\beta)]=\iota^*(\{\alpha, \beta\})$ holds on $C_{\sg}^*(A, A)$ for the two cases  where  $\alpha, \beta\in \calD^{\geq 0}(A, A)$ or $\alpha, \beta\in \calD^{\leq 0}(A, A)$. It remains to prove that \begin{equation}\label{equation6.6} [\widetilde{\iota}^*(\alpha), \widetilde{\iota}^*(f)]=\widetilde{\iota}^*(\{\alpha, f\})\end{equation} for $\alpha \in \THH^{-p-1}(A, A))$ and $f\in \THH^m(A, A))$, where $p, m\in \Z_{\geq 0}$. We need to consider the following two cases.
\begin{enumerate}
\item If $p\geq m-1$, to prove the identity (\ref{equation6.6}) is equivalent to prove the following commutative diagram.
\begin{equation*}
\xymatrix@R=3pc{
\THH^m(A, A)  \otimes  \THH^{-p-1}(A, A) \ar[d]^-{\cong}_-{\kappa_{m, 1}\otimes \kappa_{-p-1, p+2}}\ar[r]^-{\{\cdot, \cdot\}} & \THH^{m-p-2}(A, A)\ar[d]_-{\cong}^-{\kappa_{m-p-2, p+2}}\\
\HH^{m}(A, \Omega^1_{\nc}(A)) \otimes  \HH^{-p-1}(A, \Omega_{\nc}^{p+2})(A)\ar[r]^-{[\cdot, \cdot]}& \HH^{m-p-2}(A, \Omega^{p+2}_{\nc}(A))
}
\end{equation*}
Here the map $\kappa_{r, s}: \calD^{r}(A, A) \rightarrow C^{r}(A, \Omega_{\nc}^{s}(A))$ is defined as  $$\kappa_{r, s}:=\begin{cases}\theta_{s-1, r}\circ \cdots \circ \theta_{0, r} \circ \iota^{r} & \mbox{if $r-s\geq 0$}, \\ \theta_{s-1, r}\circ \cdots \circ \theta_{s-r, r} \circ \iota^{r} & \mbox{if $r-s<0$}, \end{cases}$$
where we recall that $\theta_{s,r}: C^{r}(A, \Omega^s_{\nc}(A))\rightarrow C^{r}(A, \Omega^{s+1}_{\nc}(A))$ sends $f$ to $f\otimes \id_{s\overline{A}}$.  Write $\alpha:=a_0\otimes s\overline{a}_{1, p}\in C_p(A, A)$. For any $s\overline{b}_{1, m+1}\in (s\overline{A})^{\otimes m+1},$ we have
\begin{eqnarray*}
\lefteqn{[\kappa_{m, 1}(f), \kappa_{-p-1, p+2}(\alpha)](s\overline{b}_{1, m+1})}\\
&=&\sum_{\lambda}\sum_{i=1}^m (-1)^{p(i-1)} f(s\overline{b}_{1, i-1}\otimes s\overline{\el a_0}\otimes  s\overline{a}_{1, m-i})\otimes s\overline{a}_{m-i+1, p}\otimes s\overline{\eu}\otimes s\overline{b}_{i, m+1}\\
&&+\sum_{\lambda} \sum_{i=1}^{p-m+1}(-1)^{(m-1)i}  \el a_0\otimes s\overline{a}_{1, i-1}\otimes s\overline{f}(s\overline{a}_{i, i+m-1})\otimes  s\overline{a}_{i+m, p}\otimes s\overline{\eu}\otimes \\
&& s\overline{b}_{1, m+1}
 + \sum_{\lambda}\sum_{i=p-m+2}^{p+1}(-1)^{(m-1)i}\el a_0\otimes s\overline{a}_{1, i-1} \otimes s\overline{f}(s\overline{a}_{i, p}\otimes s\overline{\eu}\otimes \\
 && s\overline{b}_{1, m-p+i-2})\otimes s\overline{b}_{m-p+i-1, m+1}.
\end{eqnarray*}
and
\begin{eqnarray*}
\lefteqn{\kappa_{m-p-2, p+2}(\{f, \alpha\})(s\overline{b}_{1, m+1})}\\
&=&\sum_{\lambda}\sum_{i=1}^{p-m+1}(-1)^{(m-1)i}
\el a_0\otimes s\overline{a}_{1, i-1}\otimes s\overline{f}(s\overline{a}_{i, i+m-1})\otimes  s\overline{a}_{i+m, p}\otimes s\overline{\eu}\otimes \\
&& s\overline{b}_{1,m+1} -\sum_{\lambda,\mu}\sum_{i=1}^{m} (-1)^{(m-i)(p-m)+m-1}\langle a_0, f(s\overline{a}_{1, i-1}\otimes s\overline{\el}\otimes s\overline{a}_{i+p-m+1, p})\rangle \\
&& e_{\mu}\eu\otimes s\overline{a}_{i, i+p-m}\otimes s\overline{e^{\mu}}\otimes s\overline{b}_{1, m+1},
\end{eqnarray*}
where $\{\el\}$ is a basis of $A$ and $\{\eu\}$ is the dual basis with respect to $\langle\cdot, \cdot\rangle$.
Comparing the above two identities, we get
\begin{eqnarray}\label{equation6.7}
\lefteqn{\left([\kappa_{m, 1}(f), \kappa_{-p-1, p+2}(\alpha)]-\kappa_{m-p-2, p+2}(\{f, \alpha\})\right)(s\overline{b}_{1, m+1})}\nonumber\\
&=&\sum_{\lambda}\sum_{i=1}^m (-1)^{p(i-1)} f(s\overline{b}_{1, i-1}\otimes s\overline{\el a_0}\otimes s\overline{a}_{1, m-i})\otimes s\overline{a}_{m-i+1, p}\otimes s\overline{\eu}\otimes s\overline{b}_{i, m+1}\nonumber\\
&&+\sum_{\lambda} \sum_{i=p-m+2}^{p+1}(-1)^{(m-1)i} \el a_0\otimes s\overline{a}_{1, i-1}\otimes  s\overline{f}(s\overline{a}_{i, p}\otimes s\overline{\eu}\otimes s\overline{b}_{1, m-p+i-2})\otimes \nonumber\\
&&s\overline{b}_{m-p+i-1, m+1}-\sum_{\lambda,\mu}\sum_{i=1}^{m} (-1)^{(m-i)(p-m)+m}\langle a_0, f(s\overline{a}_{1, i-1}\otimes s\overline{\el}\otimes s\overline{a}_{i+p-m+1, p})\rangle\nonumber\\
&& e_{\mu}\eu\otimes s\overline{a}_{i, i+p-m}\otimes s\overline{e^{\mu}}\otimes s\overline{b}_{1, m+1}.
\end{eqnarray}
For any $1\leq k\leq m$,  let us denote
\begin{eqnarray*}
\lefteqn{B_{k-1}(f, \alpha)(s\overline{b}_{1, m+1})}\\
&:=& \sum_{\lambda} (-1)^{p(k-1)}f(s\overline{b}_{1, k-1}\otimes s\overline{\el a_0}\otimes s\overline{a}_{1, m-k})\otimes s\overline{a}_{m-k+1, p}\otimes s\overline{\eu}\otimes s\overline{b}_{k,m+1}\\
&&+\sum_{\lambda} (-1)^{(m-1)(p-k-1)}  \el a_0\otimes s\overline{a}_{1, p'}\otimes s\overline{f}(s\overline{a}_{p'+1, p}\otimes s\overline{\eu}\otimes s\overline{b}_{1, k-1})\otimes s\overline{b}_{k, m+1},
\end{eqnarray*}
where $p':=p-m+k$. For any $0\leq k\leq m-1$, denote
\begin{eqnarray*}
C_{k}(f, \alpha)\lefteqn{(s\overline{b}_{1, m+1})}\\
&:=&\sum_{\lambda, \mu} \sum_{i=k+1}^m(-1)^{(m-i+k)(p-m)+m} \langle a_0, f(s\overline{a}_{1, i-k-1}\otimes s\overline{(\el\otimes s\overline{b}_{1, k})\blacktriangleleft e_{\mu}} \\
&&\otimes s\overline{a}_{i+p-m+1, p})\rangle \eu \otimes s\overline{a}_{i-k, i+p-m}\otimes  s\overline{e_{\mu}}\otimes s\overline{b}_{k+1, m+1}.
\end{eqnarray*}
In particular, we have
\begin{eqnarray*}
C_0(f, \alpha)(\lefteqn{s\overline{b}_{1, m+1})}\\
&=&\sum_{\lambda, \mu}\sum_{i=1}^m(-1)^{(m-i)(p-m)+m}\langle a_0, f(s\overline{a}_{1, i-1}\otimes s\overline{\el}\otimes s\overline{a}_{i+p-m+1, p})\rangle\\
&& e_{\mu}\eu\otimes s\overline{a}_{i, i+p-m}\otimes s\overline{e^{\mu}}\otimes s\overline{b}_{1, m+1}\end{eqnarray*}
and
\begin{eqnarray}\label{equation6.71}
[\kappa_{m, 1}(f), \kappa_{-p-1, p+2}(\alpha)]-\lefteqn{\kappa_{m-p-2, p+2}(\{f, \alpha\})}\\
&&=\sum_{k=1}^m B_{k-1}(f, \alpha)-C_0(f, \alpha).\nonumber
\end{eqnarray}
For any $1\leq k\leq m-1$, set
\begin{eqnarray*}
H_k(f, \alpha)(\lefteqn{s\overline{b}_{1, m})}\\
&:=& \sum_{\lambda, \mu} \sum_{i=k+1}^m(-1)^{(m-i-k)(p-m)+p+k}\langle a_0, f(s\overline{a}_{1, i-k-1}\otimes s\overline{\el}\otimes s\overline{b}_{1, k-1}\\
&&\otimes s\overline{e_{\mu}}\otimes s\overline{a}_{i+p-m+1, p})\rangle \eu \otimes s\overline{a}_{i-k, i+p-m}\otimes s\overline{e_{\mu}}\otimes s\overline{b}_{k, m}.\end{eqnarray*}
We claim that $\delta(H_k(f, \alpha))=C_{k}(f, \alpha)-C_{k-1}(f, \alpha)+B'_{k-1}(f, \alpha),$ where
\begin{eqnarray*}
B'_{k-1}\lefteqn{(f, \alpha)(s\overline{b}_{1, m+1})}\\
&:=&\sum_{\lambda}(-1)^{p(m-1)+m-k} f(s\overline{b}_{1, k-1}\otimes s\overline{\el}\otimes s\overline{a}_{p-m+k+1, p})a_0\otimes s\overline{a}_{1, p-m+k}\\
&&\otimes s\overline{\eu}\otimes s\overline{b}_{k, m+1}-(-1)^{k(p-m)+p} \sum_{\lambda}\eu\otimes s\overline{a}_{m-k+1, p}\otimes  a_0f(s\overline{a}_{1, m-k}\\
&&\otimes s\overline{\el}\otimes s\overline{b}_{1, k-1}) \otimes s\overline{b}_{k, m+1}.
\end{eqnarray*}
Indeed, $B_{k-1}'(f, \alpha)$ appears when $i=k+1$ and $i= m$ in  $\delta(H_k(f, \alpha))$, using $\partial(f)=0$. We note that the remaining terms in $\delta(H_k(f, \alpha))$ are cancelled by $C_k(f, \alpha)-C_{k-1}(f, \alpha)$, using $\partial(f)=\partial(\alpha)=0$.  This proves the claim.  Therefore, we have
$$\sum_{k=1}^{m-1}\delta(H_k(f, \alpha))=C_{m-1}(f, \alpha)-C_0(f, \alpha)+\sum^{m-1}_{k=1} B'_{k-1}(f, \alpha).$$
Substituting this identity  into (\ref{equation6.71}), we obtain
\begin{eqnarray*}
[\kappa_{m, 1}(f), \kappa_{-p-1, p+2}(\alpha)]-\kappa_{m-p-2, p+2}(\{f, \alpha\})
=\sum_{k=1}^m \left(B_{k-1}(f, \alpha)-B'_{k-1}(f, \alpha)\right),
\end{eqnarray*}
where  $B'_{m-1}(f, \alpha):=C_{m-1}(f, \alpha)$ for simplicity.
Thus it is sufficient to verify  $$\sum_{k=1}^m \left(B_{k-1}(f, \alpha)-B'_{k-1}(f, \alpha)\right)=0.$$
For $1\leq k\leq m$, denote
\begin{eqnarray*}
\lefteqn{H_{k-1}'(f, \alpha)(s\overline{b}_{1, m})}\\
&=&\sum_{\lambda} \sum_{i=1}^{p-m+k+1}(-1)^{p(k+i-1)+m-k-1}f(s\overline{b}_{1, k-1}\otimes s\overline{\el}\otimes  s\overline{a}_{i, m+i-k-1}) \otimes\ s\overline{a}_{m+i-k, p}\\
&&\otimes s\overline{a_0}\otimes s\overline{a}_{1, i-1} \otimes s\overline{\eu}\otimes s\overline{b}_{k, m}+\sum_{\lambda} \sum_{i=1}^{p-m+k+1} (-1)^{\epsilon_i}\el\otimes s\overline{a}_{p-i+2, p}\otimes  s\overline{a_0}\\
&&\otimes s\overline{a}_{1, p-i+k-m+1}\otimes s\overline{f}(s\overline{a}_{p-i+k-m+2, p-i+1}\otimes s\overline{\eu}\otimes s\overline{b}_{1, k-1})\otimes s\overline{b}_{k, m},
\end{eqnarray*}
where $\epsilon_i=p(i-1)+(m-1)(p-k-1)$. Then we have $$\sum_{k=1}^m\delta(H_{k-1}'(f, \alpha)) =\sum_{k=1}^m\left(B_{k-1}(f, \alpha)-B'_{k-1}(f, \alpha)\right),$$  since  we have the term $B_{k-1}(f, \alpha)$ when $i=1$, and  $B'_{k-1}(f, \alpha)$ when $i=p-m+k+1$. All other  terms in $\sum_{k=1}^m\delta(H_{k-1}'(f, \alpha))$ are cancelled by   $\partial(f)=0=\partial(\alpha)$. Therefore  the identity (\ref{equation6.6}) is verified in this case.
\item If $p\leq m-2$, we need to prove  the following commutative diagram
\begin{equation*}
\xymatrix{
\THH^m(A, A)  \otimes  \THH^{-p-1}(A, A) \ar[d]^-{\cong}_-{\id\otimes \kappa_{-p-1, p+2}}\ar[r]^-{\{\cdot, \cdot\}} & \THH^{m-p-2}(A, A)\ar[d]_-{\cong}^-{\kappa_{m-p-2, p+2}}\\
\HH^{m}(A, A) \otimes  \HH^{-p-1}(A, \Omega_{\nc}^{p+2})(A)\ar[r]^-{[\cdot, \cdot]}& \HH^{m-p-2}(A, \Omega^{p+2}_{\nc}(A)).
}
\end{equation*}
By an analogous computation as in Case 1, we get that
\begin{eqnarray}\label{equation6.81}
\lefteqn{([f, \kappa_{-p-1, p+2}(\alpha)]-\kappa_{m-p-2, p+2}(\{f, \alpha\}))(s\overline{b}_{1,m})}\nonumber \\
&=&\sum_{\lambda}\sum_{i=m-p}^m (-1)^{p(i-1)} f(s\overline{b}_{1, i-1}\otimes s\overline{\el a_0}\otimes s\overline{a}_{1, m-i})\otimes s\overline{a}_{m-i+1, p}\otimes s\overline{\eu}\otimes s\overline{b}_{i, m} \nonumber \\
&&+\sum_{\lambda} \sum_{i=1}^{p+1}(-1)^{(m-1)i} \el a_0\otimes s\overline{a}_{1, i-1}\otimes s\overline{f}(s\overline{a}_{i, p}\otimes s\overline{\eu}\otimes s\overline{b}_{1, p'})\otimes  s\overline{b}_{p'-1, m}\nonumber \\
&&-\sum_{\lambda, \mu}\sum_{i=1}^{p+1} (-1)^{(m-p)i+mp} \langle a_0,  f(s\overline{a}_{1, i-1}\otimes s\overline{\el e_{\mu}} \otimes s\overline{b}_{1, m-p-2}\otimes s\overline{\el}\otimes s\overline{a}_{i, p})\rangle \nonumber \\
&&e^{\mu} \otimes s\overline{b}_{m-p-1, m},
\end{eqnarray}
where $p':=m-p+i-2$.
For $0\leq k\leq p$, we denote
\begin{eqnarray*}
\lefteqn{B_{k}(f, \alpha)(s\overline{b}_{1, m})}\\
&=& \sum_{\lambda} (-1)^{p(m-k)}f(s\overline{b}_{1, m-p+k-1}\otimes s\overline{\el a_0}\otimes s\overline{a}_{1, p-k}) \otimes  s\overline{a}_{p-k+1, p}\otimes s\overline{\eu}\\
&&\otimes s\overline{b}_{m-p+k, m}+ \sum_{\lambda}(-1)^{(m-1)(k-1)}\el a_0 \otimes s\overline{a}_{1, k}\otimes s\overline{ f}(s\overline{a}_{k+1, p}\otimes s\overline{\eu}\\
&&\otimes s\overline{b}_{1, m-p+k-1})\otimes s\overline{b}_{m-p+k, m}.
\end{eqnarray*}
For $0\leq k\leq p$, we denote
\begin{eqnarray*}
\lefteqn{C_k(f, \alpha)(s\overline{b}_{1, m})}\\
&=& \sum_{\lambda, \mu}\sum_{i=1}^{p+1-k}(-1)^{(m-p)(k+i)+mp}\langle a_0,  f(s\overline{a}_{1, i-1}\otimes s\overline{ e_{\mu}} \otimes s\overline{b}_{1, m-p+k-2}\otimes s\overline{\el}  \\
&&\otimes s\overline{a}_{i+k, p})\rangle(e^{\mu} \otimes s\overline{a}_{i, i+k-1})  \blacktriangleleft \el  \otimes s\overline{b}_{m-p+k-1, m}.
\end{eqnarray*}
It follows from (\ref{equation6.81}) that
\begin{eqnarray}\label{equation6.82}
[f, \kappa_{-p-1, p+2}(\alpha)]-\kappa_{m-p-2, p+2}(\{f, \alpha\})=\sum_{k=0}^p B_k(f, \alpha)-C_0(f, \alpha).
\end{eqnarray}
For $0\leq k\leq p$, set
\begin{eqnarray*}
\lefteqn{H_k(f, \alpha)(s\overline{b}_{1, m-1})}\\
&=& \sum_{\lambda, \mu}\sum_{i=1}^{p+1-k}(-1)^{(m-p)(i-1)+mp+k}\langle a_0,  f(s\overline{a}_{1, i-1}\otimes s\overline{ e_{\mu}} \otimes s\overline{b}_{1, m-p+k-2}\otimes  s\overline{\el} \\
&&\otimes s\overline{a}_{i+k, p})\rangle e^{\mu} \otimes s\overline{a}_{i,i+k-1}\otimes s\overline{\eu}  \otimes s\overline{b}_{m-p+k-1, m-1}.
\end{eqnarray*}
We claim that $\delta(H_k(f, \alpha))=C_{k+1}(f, \alpha)-C_{k}(f, \alpha)+B'_k(f, \alpha),$
where
\begin{eqnarray*}
\lefteqn{B'_k(f, \alpha)(s\overline{b}_{1, m})}\\
&=& \sum_{\lambda}(-1)^{mp+k}f(s\overline{b}_{1, m-p+k-1}\otimes s\overline{\el}\otimes s\overline{a}_{k+1, p})a_0 \otimes s\overline{a}_{1, k}\otimes s\overline{\eu}\\
&&\otimes s\overline{b}_{m-p+k, m}- \sum_{\lambda}(-1)^{(m-p)k+m} \el \otimes s\overline{a}_{p+1-k, p}\otimes s\overline{a_0 f}(s\overline{a}_{1, p-k}\otimes s\overline{\eu}\\
&&\otimes s\overline{b}_{1, m-p+k-1})\otimes s\overline{b}_{m-p+k, m}
\end{eqnarray*}
and $C_{p+1}(f, \alpha):=0$.  Indeed,  the term $B_k'(f, \alpha)$ appears when $i=1$ and $i=p+1-k$ by $\partial(f)=0$ in $\delta(H_k(f, \alpha))$. The remaining terms are exactly equal to  $C_{k+1}(f, \alpha)-C_k(f, \alpha)$. Therefore, we have $$\sum^p_{k=0}\delta(H_k(f, \alpha))=-C_0+\sum_{k=0}^pB'_k(f, \alpha).$$
Substituting this identity into (\ref{equation6.82}), we get
\begin{eqnarray}\label{equation6.8}
[f, \kappa_{-p-1, p+2}(\alpha)]-\kappa_{m-p-2, p+2}(\{f, \alpha\})=
\sum_{k=0}^{p} \left(B_k(f, \alpha)- B'_k(f, \alpha)\right).
\end{eqnarray}
For $0\leq k\leq p$,  denote
\begin{eqnarray*}
\lefteqn{ H'_k(f, \alpha)(s\overline{b}_{1, m-1})}\\
&=& \sum_{\lambda}\sum_{j=1}^{k+1} (-1)^{p(m-k-j-1)+k-1}f(s\overline{b}_{1, p'}\otimes s\overline{\el }\otimes s\overline{a}_{j, j+p-k-1})\otimes s\overline{a}_{j+p-k, p}\otimes s\overline{a}_0\\
&& \otimes  s\overline{a}_{1, j-1}\otimes s\overline{\eu}\otimes s\overline{b}_{p'+1, m-1}+\sum_{\lambda}\sum^{k+1}_{j=1} (-1)^{p(j-1)+(m-1)(k-1)}\el \otimes s\overline{a}_{p-j+2, p}\\
&&\otimes s\overline{a}_0\otimes s\overline{a}_{1, k-j+1}\otimes  s\overline{f}(s\overline{a}_{k-j+2, p-j+1}\otimes s\overline{\eu}\otimes  s\overline{b}_{1, p'})\otimes s\overline{b}_{p'+1, m-1}.
\end{eqnarray*}
where $p'=m-p+k-1$.
Then using $\partial(f)=0=\partial(\alpha)$, we have
\begin{eqnarray*}
\sum_{k=0}^p\delta(H_k'(f, \alpha))=\sum_{k=0}^{p} \left(B_k(f, \alpha)-B'_k(f, \alpha)\right)
\end{eqnarray*}
since  $B_k(f, \alpha)$ appears  in $\delta(H_k'(f, \alpha))$ when $j=1,$ and $B'_k(f, \alpha)$ appears when $j=k+1$.  All other terms are cancelled by $\partial(f)=0=\partial(\alpha)$.
\end{enumerate}
This proves that $\widetilde{\iota}^*$ is compatible with  Lie brackets. \end{proof}
\begin{proof}[Proof of Theorem \ref{thm6.17}]
It follows from Proposition \ref{prop6.19} that the Jacobi identity for $\{\cdot, \cdot\}$ holds on $\THH^*(A, A)$, thus $(\THH^*(A, A), \star,  \{\cdot, \cdot\})$ is a Gerstenhaber algebra. Then Lemma \ref{lem6.18} infers that $(\THH^*(A, A), \star, \{\cdot, \cdot\}, \widehat{\Delta})$ is a BV algebra.  Since $\widetilde{\iota}^*$ is compatible with  products and Lie brackets,  we obtain that $(\HH^*_{\sg}(A, A), \cup, [\cdot, \cdot], \widetilde{\iota}\circ\widehat{\Delta}\circ\widetilde{\iota}^{-1})$ is a BV algebra and clearly, $\widetilde{\iota}^*$ is an isomorphism of BV algebras. This proves the theorem.
\end{proof}
\begin{rem}
It would be  interesting to wonder whether the cyclic Deligne's conjecture holds for $\calD^*(A,A)$, namely, whether $\calD^*(A, A)$ is an algebra over the frame little $2$-discs operad (cf. e.g. \cite{MaShSt, Kau08}). This conjecture would yield the result of Theorem \ref{thm6.17} from the operadic  point of view.
\end{rem}

\bigskip
\footnotesize
\noindent\textit{Acknowledgments.}
This work contains a part of results in author's PhD thesis and some results during the postdoc in Beijing International Center for Mathematical Research (BICMR). The author would like to thank his PhD supervisor Alexander Zimmermann for introducing this interesting topic and giving many valuable suggestions for the improvement. The author would also like to thank his PhD co-supervisor Marc Rosso for the constant support and encouragement during his career in mathematics.  He is grateful to Xiaowu Chen, Claude Cibils, Reiner Hermann, Murray Gerstenhaber, Manuel Rivera, Selene S\'anchez, Yongheng Zhang,  Guodong Zhou for many interesting discussions and useful comments. 

Part of the results in this work were presented at the workshop on Hochschild Cohomology in Algebra, Geometry, and Topology at Oberwolfach (MFO) in 2016. The author would like to thank the organizers (Luchezar L. Avramov, Ragnar-Olaf Buchweitz and Wendy Lowen) for  the opportunity to report the work.

The author would like to dedicate this paper to the memory of Professor  Ragnar-Olaf Buchweitz (1952-2017), who  shared  lots of original ideas on Tate-Hochschild cohomology and had a great  interest in this work.

\centering\footnotesize \noindent

\end{document}